 \newtheorem{thm}{Theorem}[section]
 \newtheorem{cor}[thm]{Corollary}
 \newtheorem{lem}[thm]{Lemma}
 \newtheorem{prop}[thm]{Proposition}
 \theoremstyle{definition}
 \newtheorem{defn}[thm]{Definition}
 \newtheorem{notn}[thm]{Notation}
 \theoremstyle{remark}
 \newtheorem{rem}[thm]{Remark}
\numberwithin{equation}{section}
\DeclareMathOperator{\Pspec}{P.Spec}
\DeclareMathOperator{\spec}{Spec}
\DeclareMathOperator{\Pprim}{P.Prim}
\begin{document}

\title[Poisson Hopf algebra]{A Poisson Hopf algebra related to a twisted quantum group}

\author{Sei-Qwon Oh}

\address{Department of Mathematics, Chungnam National  University, 99 Daehak-ro,   Yuseong-gu, Daejeon 34134, Korea}

\email{sqoh@cnu.ac.kr}

\thanks{This work is supported by National  Research Foundation of Korea Grant 2012-007347. The author thanks the Korea Institute for Advanced Study for the warm hospitality during a part of the preparation of this paper.}

\subjclass[2000]{17B63, 17B37, 16W35, 16D60}

\keywords{Hopf dual,  Poisson Hopf algebra, Poisson primitive ideal}

\date{September 12, 2015.}


\begin{abstract}
A Poisson algebra $\Bbb C[G]$ considered as a Poisson version  of the twisted quantized coordinate ring $\Bbb C_{q,p}[G]$, constructed by Hodges, Levasseur and Toro in \cite{HoLeT}, is obtained
and its Poisson structure is investigated.
This establishes that all Poisson prime and primitive ideals of $\Bbb C[G]$ are characterized.
Further it is shown that $\Bbb C[G]$ satisfies the Poisson Dixmier-Moeglin equivalence and that
Zariski topology on the space of Poisson primitive ideals of $\Bbb C[G]$ agrees with the quotient
topology induced by the natural surjection from the maximal ideal space of $\Bbb C[G]$ onto the Poisson
primitive ideal space.
\end{abstract}

\maketitle

\section*{Introduction}
There are many evidences that Poisson structures of Poisson algebras are heavily related to algebraic structures of their quantized algebras. Hodges and Levasseur \cite{HoLe}, Hodges, Levasseur and Toro \cite{HoLeT} and Joseph \cite{Jos} constructed bijections between the primitive
ideal space of the quantized coordinate ring of a semisimple Lie group $G$ and the set of
symplectic leaves in $G$ with a Poisson bracket arising from the quantization process.
In this case symplectic leaves correspond to the Poisson primitive ideals in the coordinate ring of $G$.
In \cite{Oh7} and \cite{Oh12}, the author constructed  the Poisson symplectic algebra and the multiparameter Poisson Weyl algebra
such that their Poisson spectra are homeomorphic to the spectra of the quantized symplectic algebra and the multiparameter quantized Weyl algebra, respectively. Moreover, in \cite{Good3}, \cite{GoLa}, \cite{GoLet2}, \cite{GoLet5}, \cite{OhPaSh1},   \cite[\S 2]{Good4} and
\cite[\S 3.3]{HoKa}, we  find  that Poisson structures of Poisson algebras are almost the same as the algebraic structures of their quantized algebras.

Let $\frak g$ be a finite dimensional complex semi-simple Lie algebra associated with a connected semisimple Lie group $G$ and let $r=\sum_{\alpha\in{\bf R}^+}x_\alpha\wedge x_{-\alpha}\in \bigwedge^2\frak g$, where $x_\alpha$ are
root vectors of $\frak g$ such that $(x_\alpha|x_{-\beta})=\delta_{\alpha\beta}$. Then $r$ is called the standard classical $r$-matrix,  the quantized universal enveloping algebra $U_q(\frak g)$ is a quantization of the enveloping algebra $U(\frak g)$ by $r$ and
the quantized function algebra $\Bbb C_q[G]$ is obtained as an algebra of functions on $U_q(\frak g)$. Let $\frak b^{\pm}$ be the Borel subalgebras of $\frak g$. Then the pair of  subalgebras  $U_q(\frak b^+)$ and  $U_q(\frak b^-)$ of $U_q(\frak g)$ is a dual pair of Hopf algebras.
Let $u$ be an alternating form on the dual space $\frak h^*$, where $\frak h$ is a  Cartan subalgebra of $\frak g$.  Hodges,  Levasseur and Toro \cite{HoLeT}  constructed   a dual pair of Hopf algebras $U_{q,p^{-1}}(\frak b^+)$ and $U_{q,p^{-1}}(\frak b^-)$ by twisting that pair in the case when $u$ is algebraic and obtained the  Drinfeld double $D_{q,p^{-1}}(\frak g)=U_{q,p^{-1}}(\frak b^+)\bowtie U_{q,p^{-1}}(\frak b^-)$. Finally they found a Hopf algebra  $\Bbb C_{q,p}[G]$, called the multiparameter quantized coordinate ring, as an algebra of  functions on $D_{q,p^{-1}}(\frak g)$, and proved that the prime and primitive ideals of $\Bbb C_{q,p}[G]$ are indexed by the elements of the double Weyl group $W\times W$ using a natural action of the torus $H$ associated with $\frak h$ and the adjoint action of a Hopf algebra.
From this  Hopf algebra $\Bbb C_{q,p}[G]$ arises the question: Is there a Poisson Hopf algebra related to $\Bbb C_{q,p}[G]$? A main aim of this paper is to give a solution for this question. Here we find a Poisson Hopf algebra $\Bbb C[G]$ considered as a Poisson version of $\Bbb C_{q,p}[G]$ and  establish that the Poisson structure of $\Bbb C[G]$ is an analogue to the algebraic structure of $\Bbb C_{q,p}[G]$.

In the section 1, we construct a Lie algebra $\frak d$ such that the enveloping algebra $U(\frak d)$ may be considered as a classical algebra of $D_{q,p^{-1}}(\frak g)$.
 In the section 2, we prove that the standard $r$-matrix $r$ still makes $\frak d$ a Lie bialgebra, find  the Poisson bracket $\{\cdot,\cdot\}_r$ on an algebra $\Bbb C[G]$ of functions  on $U(\frak d)$ and get finally a Poisson bracket $\{\cdot,\cdot\}$ on $\Bbb C[G]$ by twisting $\{\cdot,\cdot\}_r$ using  a skew symmetric bilinear form $u$ on $\frak h^*$,
 that makes $\Bbb C[G]$ a Poisson Hopf algebra.
In the section 3, we establish that  the Poisson prime ideals of $\Bbb C[G]$ are indexed by the elements of the double Weyl group $W\times W$. In the section 4, we define the Poisson adjoint action of a Poisson Hopf algebra and prove that the Poisson central elements are the fixed elements under the Poisson adjoint action. Finally, in the section 5, we show that the Poisson structure
of $\Bbb C[G]$ is an analogue to the algebraic structure of $\Bbb C_{q,p}[G]$ by using the  Poisson adjoint action defined in the section 4. Moreover we prove that  $\Bbb C[G]$ satisfies  the Poisson Dixmier-Moeglin equivalence and that  the Poisson primitive ideal space of $\Bbb C[G]$ is a quotient space of  its classical space.

Several parts of the paper are modified  from  those of \cite{HoLeT} by using Poisson terminologies because $\Bbb C[G]$ is a good Poisson analogue of the quantum group $\Bbb C_{q,p}[G]$ in \cite{HoLeT}, all fields are of characteristic zero and vector spaces are over the complex number field $\Bbb C$ unless stated otherwise. Moreover all Poisson algebras considered here are commutative.
\medskip

  A Poisson ideal $P$ of $A$ is said to be  {\it Poisson prime} if, for all Poisson ideals $I$ and $J$, $IJ\subseteq P$ implies $I\subseteq P$ or $J\subseteq P$.  Note that if $A$ is noetherian  then a Poisson prime ideal of $A$ is a prime ideal by
  \cite[Lemma 1.1(d)]{Good3}.  A Poisson ideal $P$  is said to be  {\it Poisson primitive} if there exists a maximal ideal $M$ such that $P$ is the largest Poisson ideal contained in $M$.
Note that Poisson primitive is Poisson prime.


\section{Algebra of functions}

\subsection{Notation}
Here we recall well-known facts in \cite{Kac}, which are summarized in \cite[Chapter 2]{HoKa}. Let $C=(a_{ij})_{n\times n}$ be an indecomposable and  symmetrizable generalized Cartan matrix  of finite type.  Hence there exists positive integers
$\{d_i\}_{1\leq i\leq n}$  such that the matrix $DC$ is symmetric positive definite, where  $D=\text{diag}(d_i)$ is   the diagonal matrix. (In \cite[Chapter 2]{HoKa}, each $d_i$ is denoted by $s_i$.)
Let $\frak g=(\frak g, [\cdot,\cdot]_{\frak g})$ be the finite dimensional Lie algebra over
the complex number field $\Bbb C$ associated to  $C=(a_{ij})_{n\times n}$.

Let  $\frak h$  be a Cartan subalgebra of $\frak g$ with simple roots $\alpha_1,\ldots,
\alpha_n$, ${\bf R}$ the root system, ${\bf R}^+$ the set of positive roots and $W$ the Weyl group.  Choose $h_i\in\frak h$, $1\leq i\leq n$, such that
\begin{equation}\label{Z}
\alpha_j:\frak h\longrightarrow \Bbb C,\ \ \alpha_j(h_i)=a_{ij}\ \text{ for all }j=1,\ldots,n.
\end{equation}
Then $\{h_i\}_{i=1}^n$ forms  a basis of $\frak h$ since  $C$ has rank $n$, and
$\frak g$ is generated by $h_i$ and $x_{\pm\alpha_i}$, $i=1, \ldots , n$, with relations
$$\begin{array}{c}
[h_i,h_j]_{\frak g}=0,\  [h_i,x_{\pm\alpha_j}]_{\frak g}=\pm a_{ij}x_{\pm\alpha_j}, \ [x_{\alpha_i},x_{-\alpha_j}]_{\frak g}=\delta_{ij}h_i,\\
(\text{ad}_{x_{\pm\alpha_i}})^{1-a_{ij}}(x_{\pm\alpha_j})=0,\ \ i\neq j
\end{array}$$
by \cite[Definition 2.1.3]{HoKa}. (In \cite[Definition 2.1.3]{HoKa}, $x_{\alpha_i}$ and $x_{-\alpha_i}$ are denoted by $e_i$ and $f_i$ respectively.)
Denote by $\frak n^+$ and $\frak n^-$ the subspaces of $\frak g$ spanned by root vectors with positive and negative roots respectively and set $\frak n=\frak n^+\oplus\frak n^-$. Hence
$$\frak g=\frak h\oplus\frak n=\frak n^+\oplus\frak h\oplus\frak n^-.$$
For each $\beta\in\bf{R}$, we will write $x_\beta$ for a root vector with root $\beta$, hence $\frak g_\beta=\Bbb Cx_\beta\subseteq\frak n$, where   $\frak g_\beta$ is the root space of $\frak g$ with root $\beta$.

By \cite[\S2.3]{HoKa}, there exists a nondegenerate  symmetric bilinear form $(\cdot|\cdot)$ on $\frak h^*$ given
by
\begin{equation}\label{Y}(\alpha_i|\alpha_j)=d_ia_{ij}
\end{equation}
 for all $i,j=1,\ldots,n$.
This  form $(\cdot|\cdot)$ induces the isomorphism
$\frak h^*\longrightarrow\frak h, \lambda\mapsto h_\lambda$, where $h_\lambda$ is  defined by
$$(\alpha_i|\lambda)=\alpha_i(h_\lambda)\ \ \text{ for all } i=1,\ldots,n.$$
(The isomorphism $\frak h^*\longrightarrow\frak h,  \lambda\mapsto h_\lambda$, is denoted by $\nu^{-1}$ in \cite[\S2.3]{HoKa}.)
Note that, by (\ref{Z}) and (\ref{Y}),
$$h_{\alpha_i}=d_ih_i,\ \ i=1,\ldots, n.$$
Identifying $\frak h^*$ to $\frak h$   via $\lambda\mapsto h_\lambda$,
 $\frak h$ has a nondegenerate  symmetric bilinear form $(\cdot|\cdot)$ given by
 $$(\lambda| \mu)=(h_\lambda|h_\mu)=\lambda(h_\mu).$$
 For example, $(h_i|h_j)=(d_i^{-1}h_{\alpha_i}|d_j^{-1}h_{\alpha_j})=d_i^{-1}d_j^{-1}(\alpha_i|\alpha_j)=d_j^{-1}a_{ij}.$
This is extended to
a nondegenerate $\frak g$-invariant symmetric bilinear form on $\frak g$ by  \cite[(2.7) and Proposition 2.3.6]{HoKa}
and \cite[Theorem 2.2 and its proof]{Kac}:
\begin{equation}\label{YA}
(h_i|h_j)=d_j^{-1}a_{ij},\ \ (h|x_{\alpha})=0,\ \ (x_{\alpha}|x_{\beta})=0\text{ if }\alpha+\beta\neq0,\ \
(x_{\alpha_i}|x_{-\alpha_j})=d_i^{-1}\delta_{ij}
\end{equation}
 for $h\in\frak h$ and $\alpha,\beta\in{\bf R}$.

\begin{lem}
For each $\alpha\in\bf{R}^+$,
\begin{equation}\label{X}
[x_{\alpha},x_{-\alpha}]_{\frak g}=(x_\alpha|x_{-\alpha})h_{\alpha}.
\end{equation}
\end{lem}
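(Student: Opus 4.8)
The plan is to locate the element $[x_\alpha,x_{-\alpha}]_{\frak g}$ inside $\frak h$ by pairing it against an arbitrary element of $\frak h$ via the invariance of the form, and then to invoke nondegeneracy. First I would observe that $[x_\alpha,x_{-\alpha}]_{\frak g}$ already lies in $\frak h$. Indeed, for every $h\in\frak h$ one has $[h,x_{\pm\alpha}]_{\frak g}=\pm\alpha(h)x_{\pm\alpha}$ (this is just the defining property of the root vectors $x_{\pm\alpha}\in\frak g_{\pm\alpha}$), so the Jacobi identity gives
$$[h,[x_\alpha,x_{-\alpha}]_{\frak g}]_{\frak g}=[[h,x_\alpha]_{\frak g},x_{-\alpha}]_{\frak g}+[x_\alpha,[h,x_{-\alpha}]_{\frak g}]_{\frak g}=\alpha(h)[x_\alpha,x_{-\alpha}]_{\frak g}-\alpha(h)[x_\alpha,x_{-\alpha}]_{\frak g}=0.$$
Hence $[x_\alpha,x_{-\alpha}]_{\frak g}$ commutes with all of $\frak h$, i.e. it lies in the zero root space $\frak g_0=\frak h$.

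Next, using the $\frak g$-invariance of the symmetric bilinear form $(\cdot|\cdot)$ established together with (\ref{YA}), I would compute, for an arbitrary $h\in\frak h$,
$$([x_\alpha,x_{-\alpha}]_{\frak g}\,|\,h)=(x_\alpha\,|\,[x_{-\alpha},h]_{\frak g})=\alpha(h)(x_\alpha\,|\,x_{-\alpha}),$$
where the middle equality is invariance and the last uses $[x_{-\alpha},h]_{\frak g}=-[h,x_{-\alpha}]_{\frak g}=\alpha(h)x_{-\alpha}$. Then I would rewrite the scalar $\alpha(h)$ through the identification $\frak h^*\cong\frak h$: since $(\alpha_i|\lambda)=\alpha_i(h_\lambda)$ extends by linearity to $(\alpha|\mu)=\alpha(h_\mu)$, and since every $h\in\frak h$ has the form $h_\mu$, we obtain $(h_\alpha\,|\,h)=\alpha(h)$ for all $h\in\frak h$. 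Substituting gives
$$([x_\alpha,x_{-\alpha}]_{\frak g}\,|\,h)=(x_\alpha\,|\,x_{-\alpha})(h_\alpha\,|\,h)=\big((x_\alpha\,|\,x_{-\alpha})h_\alpha\,\big|\,h\big)$$
for every $h\in\frak h$. As the form is nondegenerate on $\frak h$ and both $[x_\alpha,x_{-\alpha}]_{\frak g}$ and $(x_\alpha|x_{-\alpha})h_\alpha$ lie in $\frak h$, the identity (\ref{X}) follows at once.

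The only care required is bookkeeping rather than any genuine obstacle: confirming $[x_\alpha,x_{-\alpha}]_{\frak g}\in\frak h$, getting the sign conventions right in the invariance step, and checking that $(h_\alpha|h)=\alpha(h)$ holds for \emph{all} $h\in\frak h$ and not merely for $h$ of the shape $h_\mu$ with $\mu$ a root. Once these are in place, the substance of the proof is the single invariance computation above, and the conclusion is forced by nondegeneracy of $(\cdot|\cdot)$ on $\frak h$.
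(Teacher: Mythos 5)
Your proof is correct and follows essentially the same route as the paper: pair $[x_\alpha,x_{-\alpha}]_{\frak g}$ against an arbitrary element of $\frak h$ using invariance of the form, identify the resulting scalar with $(h_\alpha|h)$, and conclude by nondegeneracy. The only difference is that you explicitly verify $[x_\alpha,x_{-\alpha}]_{\frak g}\in\frak h$ via the Jacobi identity, a step the paper leaves implicit but which is indeed needed for the nondegeneracy argument, since root vectors are orthogonal to all of $\frak h$.
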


\begin{proof}
By \cite[Proposition 2.3.6]{HoKa},
$$(h_\lambda|[x_{\alpha},x_{-\alpha}]_{\frak g})=([h_\lambda,x_\alpha]_{\frak g}|x_{-\alpha})=(\lambda|\alpha)(x_\alpha|x_{-\alpha})=(x_\alpha|x_{-\alpha})(h_\lambda|h_\alpha)
=(h_\lambda|(x_\alpha|x_{-\alpha})h_\alpha)$$
for each $h_\lambda\in\frak h$. Hence the result holds.
\end{proof}

\subsection{}\label{SKEW}
 Let $u\in\bigwedge^2\frak h$. We may write
$$u=\sum_{1\leq i,j\leq n}u_{ij} h_i\otimes h_j$$
for some skew symmetric matrix $(u_{ij})$ since $\{h_i\}_{i=1}^n$ forms a basis of $\frak h$. The element $u$ can be considered  as a skew symmetric (alternating)  form
on $\frak h^*$ by
$$u(\lambda,\mu)=\sum u_{ij} \lambda(h_i)\mu(h_j)$$
for any $\lambda,\mu\in \frak h^*$.
Hence there exists a unique linear map  $\Phi:\frak h^*\longrightarrow \frak h^*$ such that
$$u(\lambda,\mu)=(\Phi(\lambda)|\mu)$$
for any $\lambda,\mu\in \frak h^*$ since the form $(\cdot|\cdot)$ on $\frak h^*$ is nondegenerate. Set
$$ \Phi_+=\Phi + I,\ \ \  \Phi_-=\Phi -I,$$
where $I$ is the identity map on $\frak h^*$.  Thus
\begin{equation}\label{Phi}
\begin{array}{c}
(\Phi_+\lambda|\mu)=u(\lambda,\mu)+(\lambda|\mu)\\
(\Phi_-\lambda|\mu)=u(\lambda,\mu)-(\lambda|\mu)
\end{array}
\end{equation}
 for all $\lambda,\mu\in\frak h^*$.

\subsection{}\label{var}
Fix a vector space $\frak k$ isomorphic to $\frak h$ and let  $\varphi:\frak h\longrightarrow\frak k$ be an isomorphism of vector spaces. For each $\lambda\in\frak h^*$, denote by $k_\lambda\in\frak k$ the  element $\varphi(h_\lambda)$. Let
$$\frak g'=\frak k\oplus\frak n$$
and define a skew symmetric bilinear product $[\cdot,\cdot]_{\frak g'}$ on $\frak g'$ by
\begin{equation}\label{Gprime}
\begin{array}{ll}
\ [k_\lambda,k_\mu]_{\frak g'}=0, &
[k_\lambda, x_\alpha]_{\frak g'}=(\alpha|\lambda)x_\alpha, \\
 \ [x_\alpha,x_\beta]_{\frak g'}=[x_\alpha,x_\beta]_{\frak g}\ \  (\alpha\neq-\beta),&
[x_\alpha,x_{-\alpha}]_{\frak g'}=\varphi([x_\alpha,x_{-\alpha}]_{\frak g})
\end{array}
\end{equation}
for all $\lambda,\mu\in\frak h^*$ and $x_\alpha,x_{-\alpha},x_\beta\in\frak n$. Since
$[k_\lambda, x_\alpha]_{\frak g'} =[h_\lambda,x_\alpha]_{\frak g}$  for each $\lambda\in\frak h^*$,  $\frak g'$ is the Lie algebra isomorphic to $\frak g$ such that each element $k_\lambda\in\frak k$ corresponds to the element  $h_\lambda$ of $\frak g$. That is, $\frak g'$ is the Lie algebra $\frak g$ such that the elements $h_\lambda\in\frak h$ are replaced by $k_\lambda\in\frak k$.
Note that
\begin{equation}\label{C}
[x_\alpha,x_\beta]_{\frak g'}=[x_\alpha,x_\beta]_{\frak g}\in\frak n
\end{equation}
for all $\alpha,\beta\in\bf R$ with $\alpha\neq-\beta$.

Let
$$\frak d=\frak h\oplus\frak k\oplus\frak n.$$
Hence $\frak d=\frak k\oplus\frak g=\frak h\oplus\frak g'$.
Define a skew symmetric bilinear product $[\cdot,\cdot]$ on $\frak d$ by
\begin{equation}\label{Lie bracket}
\begin{array}{c}
\ [h_\lambda, h_\mu]=0, \ \ \ [h_\lambda,k_\mu]=0, \ \ \ [k_\lambda,k_\mu]=0,\\
\ [h_\lambda, x_\alpha]=-(\Phi_-\lambda|\alpha)x_\alpha,\ \ \  [k_\lambda,x_\alpha]=(\Phi_+\lambda|\alpha)x_\alpha,\\
\ [x_\alpha,x_\beta]=2^{-1}([x_\alpha,x_\beta]_{\frak g}+[x_\alpha,x_\beta]_{\frak g'})
\end{array}\end{equation}
for all $h_\lambda, h_\mu\in\frak h$,  $k_\lambda, k_\mu\in\frak k$ and  $x_\alpha,x_\beta\in\frak n$.

\begin{lem}
(1) For any  elements $x_\alpha, x_\beta\in\frak n$ such that $\alpha+\beta\neq0$,
\begin{equation}\label{A}
[x_\alpha,x_\beta]=[x_\alpha,x_\beta]_{\frak g}=[x_\alpha,x_\beta]_{\frak g'}.
 \end{equation}

(2)  For any  elements $x_\alpha, x_\beta, x_\gamma\in\frak n$ such that $\alpha+\beta+\gamma\neq0$,
 \begin{equation}\label{B}
 [[x_\alpha,x_\beta],x_\gamma]=[[x_\alpha,x_\beta]_{\frak g},x_\gamma]_{\frak g}
 =[[x_\alpha,x_\beta]_{\frak g'},x_\gamma]_{\frak g'}.
 \end{equation}

(3) For any  elements $x_\alpha, x_\beta, x_\gamma\in\frak n$,
 \begin{equation}\label{D}
 [[x_\alpha,x_\beta],x_\gamma]=2^{-1}([[x_\alpha,x_\beta]_{\frak g},x_\gamma]_{\frak g}
 +[[x_\alpha,x_\beta]_{\frak g'},x_\gamma]_{\frak g'}).
 \end{equation}
 \end{lem}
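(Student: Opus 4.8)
The plan is to treat the three statements in order, using each as input for the next and reducing everything to a dichotomy according to whether a relevant sum of roots vanishes. The only genuine computation occurs in the single case $\alpha+\beta=0$ of part~(2); everything else is bookkeeping built on top of it.

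For part~(1) I would simply combine the definition of the bracket on $\frak n$ in (\ref{Lie bracket}) with (\ref{C}). Since $\alpha+\beta\neq0$, equation (\ref{C}) gives $[x_\alpha,x_\beta]_{\frak g'}=[x_\alpha,x_\beta]_{\frak g}$, so the averaged bracket $[x_\alpha,x_\beta]=2^{-1}([x_\alpha,x_\beta]_{\frak g}+[x_\alpha,x_\beta]_{\frak g'})$ collapses to the common value, yielding (\ref{A}) at once.

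For part~(2) I would split on whether $\alpha+\beta=0$. If $\alpha+\beta\neq0$, then by part~(1) the inner bracket $[x_\alpha,x_\beta]$ is a scalar multiple of the root vector $x_{\alpha+\beta}$ and coincides with both $[x_\alpha,x_\beta]_{\frak g}$ and $[x_\alpha,x_\beta]_{\frak g'}$; since $(\alpha+\beta)+\gamma\neq0$, a second application of part~(1) to the pair $x_{\alpha+\beta},x_\gamma$ forces all three expressions in (\ref{B}) to agree. The substantive case is $\alpha+\beta=0$, i.e. $\beta=-\alpha$, and this is where the main obstacle lies. Here I would use (\ref{X}) together with the definition of $[\cdot,\cdot]_{\frak g'}$ from (\ref{Gprime}) to write $[x_\alpha,x_{-\alpha}]=2^{-1}(x_\alpha|x_{-\alpha})(h_\alpha+k_\alpha)$, and then compute $[h_\alpha+k_\alpha,x_\gamma]$ from (\ref{Lie bracket}). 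The key point is the cancellation $-(\Phi_-\alpha|\gamma)+(\Phi_+\alpha|\gamma)=2(\alpha|\gamma)$, which follows from (\ref{Phi}): the skew part $u(\alpha,\gamma)$ drops out and the symmetric part doubles. This is exactly the design feature that makes the averaged bracket behave consistently on $\frak n$, and it gives $[[x_\alpha,x_{-\alpha}],x_\gamma]=(x_\alpha|x_{-\alpha})(\alpha|\gamma)x_\gamma$. I would then verify that the two reference brackets produce the same scalar, using $[h_\alpha,x_\gamma]_{\frak g}=(\alpha|\gamma)x_\gamma$ and $[k_\alpha,x_\gamma]_{\frak g'}=(\alpha|\gamma)x_\gamma$ by (\ref{Gprime}), so that all three expressions in (\ref{B}) coincide.

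Finally, part~(3) follows formally from the first two. If $\alpha+\beta+\gamma\neq0$, part~(2) says the three brackets in (\ref{B}) are equal, so the right-hand side of (\ref{D}) is the average of two equal terms and reduces to the left-hand side. If $\alpha+\beta+\gamma=0$, then $\alpha+\beta=-\gamma\neq0$ since $\gamma$ is a root, so part~(1) gives $[x_\alpha,x_\beta]=[x_\alpha,x_\beta]_{\frak g}=[x_\alpha,x_\beta]_{\frak g'}=c\,x_{-\gamma}$ for a common scalar $c$; substituting this into both sides and expanding $[x_{-\gamma},x_\gamma]$ by its definition (\ref{Lie bracket}) shows the left-hand side equals $c\,2^{-1}([x_{-\gamma},x_\gamma]_{\frak g}+[x_{-\gamma},x_\gamma]_{\frak g'})$, which is precisely the right-hand side. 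The one step to watch is ensuring the scalar $c$ is genuinely common to the $\frak g$- and $\frak g'$-brackets, which is exactly what part~(1) guarantees.
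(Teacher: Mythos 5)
Your proposal is correct and follows essentially the same route as the paper's own proof: part~(1) from (\ref{C}) and the definition (\ref{Lie bracket}), part~(2) by the dichotomy on $\alpha+\beta=0$ with the key cancellation $-(\Phi_-\alpha|\gamma)+(\Phi_+\alpha|\gamma)=2(\alpha|\gamma)$, and part~(3) by reducing the case $\alpha+\beta+\gamma=0$ to part~(1) applied to $[x_\alpha,x_\beta]=c\,x_{-\gamma}$ and the defining formula for the averaged bracket. There are no gaps.
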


\begin{proof}
(1) It follows by (\ref{C}) and (\ref{Lie bracket}).

(2)
 If $\alpha+\beta\neq0$ then
 $[x_\alpha,x_\beta]=[x_\alpha,x_\beta]_{\frak g}=[x_\alpha,x_\beta]_{\frak g'}\in\frak n$  by (\ref{A})
 and thus
 $$[[x_\alpha,x_\beta],x_\gamma]=[[x_\alpha,x_\beta]_{\frak g},x_\gamma]_{\frak g}=[[x_\alpha,x_\beta]_{\frak g'},x_\gamma]_{\frak g'}$$
 by (\ref{A}) since $(\alpha+\beta)+\gamma\neq0$.

 Suppose  $\alpha+\beta=0$. We may assume that $\alpha$ is a positive root. By (\ref{X}), $[x_\alpha,x_{-\alpha}]_{\frak g}=ah_\alpha\in\frak h$ and $[x_\alpha,x_{-\alpha}]_{\frak g'}=ak_{\alpha}\in\frak k$, where
 $a=(x_\alpha|x_{-\alpha})$, and
 $$\begin{aligned}
 \ [[x_\alpha,x_{-\alpha}],x_\gamma]&=2^{-1}([[x_\alpha,x_{-\alpha}]_{\frak g}+[x_\alpha,x_{-\alpha}]_{\frak g'},x_\gamma]
 =2^{-1}a([h_\alpha, x_\gamma]+[k_\alpha,x_\gamma])\\
 &=2^{-1}a(-(\Phi_-\alpha|\gamma)+(\Phi_+\alpha|\gamma))x_\gamma=a(\alpha|\gamma)x_\gamma.
 \end{aligned}$$
 Since
 $$\begin{aligned}
 \ [[x_\alpha,x_{-\alpha}]_{\frak g},x_\gamma]_{\frak g}&=a[h_\alpha,x_\gamma]_{\frak g}=a(\alpha|\gamma)x_\gamma\\
 [[x_\alpha,x_{-\alpha}]_{\frak g'},x_\gamma]_{\frak g'}&=a[k_\alpha,x_\gamma]_{\frak g'}=a(\alpha|\gamma)x_\gamma,
 \end{aligned}$$
 we have the result $[[x_\alpha,x_{-\alpha}],x_\gamma]=[[x_\alpha,x_{-\alpha}]_{\frak g},x_\gamma]_{\frak g}
 =[[x_\alpha,x_{-\alpha}]_{\frak g'},x_\gamma]_{\frak g'}$.

(3) If $\alpha+\beta+\gamma\neq0$ then (\ref{D}) holds by (\ref{B}). Suppose that $\alpha+\beta+\gamma=0$. Then
$\alpha+\beta=-\gamma\neq0$.
Hence $[x_\alpha,x_\beta]=[x_\alpha,x_\beta]_{\frak g}=[x_\alpha,x_\beta]_{\frak g'}\in\frak n$ by (\ref{A}).
It follows that
$$\begin{aligned}
\ [[x_\alpha,x_{\beta}],x_\gamma]&=
2^{-1}([[x_\alpha,x_{\beta}],x_\gamma]_{\frak g}+[[x_\alpha,x_{\beta}],x_\gamma]_{\frak g'})\\
&=2^{-1}([[x_\alpha,x_{\beta}]_{\frak g},x_\gamma]_{\frak g}+[[x_\alpha,x_{\beta}]_{\frak g'},x_\gamma]_{\frak g'}).
\end{aligned}$$
\end{proof}

\begin{thm}
The vector space $\frak d=\frak h\oplus\frak k\oplus\frak n$ is a Lie algebra with Lie bracket (\ref{Lie bracket}).
\end{thm}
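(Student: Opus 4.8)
The plan is to verify the two defining axioms of a Lie algebra. Bilinearity and skew-symmetry are built into the definition (\ref{Lie bracket}), so the whole content lies in the Jacobi identity
$$J(a,b,c):=[[a,b],c]+[[b,c],a]+[[c,a],b]=0.$$
Since $J$ is multilinear (indeed alternating) in $a,b,c$, it suffices to check it when each argument is one of the basis vectors $h_\lambda$, $k_\mu$ or a root vector $x_\alpha$. I would organize the verification according to how many of the three arguments lie in the abelian part $\frak t:=\frak h\oplus\frak k$ and how many lie in $\frak n$, treating four cases.

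If all three arguments lie in $\frak t$, every bracket vanishes by the first line of (\ref{Lie bracket}), so $J=0$. If exactly two arguments lie in $\frak t$ and one is a root vector $x_\alpha$, I would record that each Cartan-type element acts diagonally, writing $[t,x_\alpha]=\chi_t(\alpha)x_\alpha$ for $t\in\frak t$; the term $[[a,b],c]$ then vanishes because $\frak t$ is abelian, while the two remaining terms produce $\mp\chi_a(\alpha)\chi_b(\alpha)x_\alpha$ and cancel. For the case of exactly one Cartan-type argument $a\in\frak t$ and two root vectors $x_\beta,x_\gamma$, the crucial structural fact is that the eigenvalue $\chi_a(\cdot)$ is \emph{linear} in the root, since both $\alpha\mapsto-(\Phi_-\lambda|\alpha)$ and $\alpha\mapsto(\Phi_+\lambda|\alpha)$ are additive. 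When $\beta+\gamma\neq0$ one has $[x_\beta,x_\gamma]\in\frak n$ by (\ref{A}), and the additivity $\chi_a(\beta)+\chi_a(\gamma)=\chi_a(\beta+\gamma)$ forces the three terms to cancel; when $\beta+\gamma=0$ the term $[[x_\beta,x_\gamma],a]$ vanishes because $[x_\beta,x_\gamma]\in\frak t$ and $\frak t$ is abelian, and the other two cancel because $\chi_a(\alpha)+\chi_a(-\alpha)=0$.

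The only substantial case is when all three arguments are root vectors $x_\alpha,x_\beta,x_\gamma$, and here I would invoke part (\ref{D}) of the preceding Lemma, which was proved precisely for this purpose. It expresses each summand as the average
$$[[x_\alpha,x_\beta],x_\gamma]=2^{-1}\bigl([[x_\alpha,x_\beta]_{\frak g},x_\gamma]_{\frak g}+[[x_\alpha,x_\beta]_{\frak g'},x_\gamma]_{\frak g'}\bigr),$$
and likewise for the two cyclic permutations. Summing and factoring out $2^{-1}$, the cyclic sum of $\frak d$-brackets equals the average of the cyclic sum of $\frak g$-brackets and the cyclic sum of $\frak g'$-brackets. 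Since $\frak g$ and $\frak g'$ are genuine Lie algebras, each of these two cyclic sums vanishes by its own Jacobi identity, whence $J(x_\alpha,x_\beta,x_\gamma)=0$. The main obstacle is exactly this last case—reconciling the twisted Cartan action with the averaged bracket on $\frak n$—but identity (\ref{D}) reduces it cleanly to the known Jacobi identities in $\frak g$ and $\frak g'$, so no further computation is needed and the four cases together establish the theorem.
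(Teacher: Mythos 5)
Your proof is correct and follows essentially the same route as the paper: the same case division according to how many arguments lie in $\frak h\oplus\frak k$, with the mixed cases settled by the (bi)linearity of the Cartan action $[t,x_\alpha]$ in the root $\alpha$, and the all-root-vector case reduced via identity (\ref{D}) to the Jacobi identities of $\frak g$ and $\frak g'$. The only cosmetic difference is your uniform $\chi_t$ notation, which treats the paper's separate $h_\lambda$/$k_\mu$ computations in one stroke.
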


\begin{proof}
It is enough to show that (\ref{Lie bracket}) satisfies the Jacobi identity, $$[[a,b],c]+[[b,c],a]+[[c,a],b]=0$$ for all
nonzero elements $a,b,c\in\frak d.$
If $a,b,c\in\frak h\oplus \frak k$ then $[[a,b],c]+[[b,c],a]+[[c,a],b]=0$ trivially. If one of $a,b,c$ is an element of $\frak n$,
say $a=h_\lambda, b=k_\mu$ and $c=x_\alpha$, then
$$\begin{aligned}
\ [[a,b],c]+[[b,c],a]+[[c,a],b]&=[[h_\lambda, k_\mu],x_\alpha]+[[k_\mu, x_\alpha], h_\lambda]+[[x_\alpha, h_\lambda],k_\mu]\\
&=(\Phi_+\mu|\alpha)(\Phi_-\lambda|\alpha)x_\alpha-(\Phi_-\lambda|\alpha)(\Phi_+\mu|\alpha)x_\alpha\\
&=0.
\end{aligned}$$
If two of $a,b,c$ are elements of $\frak n$,
say $a=h_\lambda, b=x_\alpha$ and $c=x_\beta$, then
$$\begin{aligned}
\ [[a,b],c]&+[[b,c],a]+[[c,a],b]=[[h_\lambda, x_\alpha],x_\beta]+[[x_\alpha,x_\beta], h_\lambda]+[[x_\beta, h_\lambda],x_\alpha]\\
&=\left\{\begin{aligned}
&-(\Phi_-\lambda|\alpha)[x_\alpha,x_\beta]+(\Phi_-\lambda|\alpha+\beta)[x_\alpha,x_\beta]+(\Phi_-\lambda|\beta)[x_\beta,x_\alpha], &\text{ if }\alpha+\beta\neq0,\\
&-(\Phi_-\lambda|\alpha)[x_\alpha,x_{-\alpha}]+(\Phi_-\lambda|-\alpha)[x_{-\alpha},x_\alpha], &\text{ if }\alpha+\beta=0,\end{aligned}\right.\\
&=0.
\end{aligned}$$
Finally, suppose that all  of $a,b,c$ are elements of $\frak n$,
say $a=x_\alpha$, $b=x_\beta$ and $c=x_\gamma$. Then, by (\ref{D}),
$$\begin{aligned}
\ [[a,b],c]&+[[b,c],a]+[[c,a],b]=[[x_\alpha,x_\beta],x_\gamma]+[[x_\beta,x_\gamma], x_\alpha]+[[x_\gamma, x_\alpha],x_\beta]\\
&=2^{-1}([[x_\alpha,x_\beta]_{\frak g},x_\gamma]_{\frak g}+[[x_\alpha,x_\beta]_{\frak g'},x_\gamma]_{\frak g'})\\
&\qquad\qquad+2^{-1}([[x_\beta,x_\gamma]_{\frak g}, x_\alpha]_{\frak g}+[[x_\beta,x_\gamma]_{\frak g'}, x_\alpha]_{\frak g'})\\
&\qquad\qquad\qquad\qquad+2^{-1}([[x_\gamma, x_\alpha]_{\frak g},x_\beta]_{\frak g}+[[x_\gamma, x_\alpha]_{\frak g'},x_\beta]_{\frak g'})\\
&=0
\end{aligned}$$
since $\frak g$ and $\frak g'$ are Lie algebras.
It completes the proof.
\end{proof}


\subsection{}
Let $\frak m$ be a Lie algebra and let  $s=\sum_ia_i\otimes b_i\in\frak m\otimes\frak m$. We give a notation:
$$[[s,s]]=[s_{12},s_{13}]+[s_{12},s_{23}]+[s_{13},s_{23}],$$
 where
 $$\begin{aligned}
 \ [s_{12},s_{13}]&=\sum_{i,j}[a_i,a_j]\otimes b_i\otimes b_j,\\
 [s_{12},s_{23}]&=\sum_{i,j}a_i\otimes[b_i,a_j]\otimes b_j,\\
 [s_{13},s_{23}]&=\sum_{i,j}a_i\otimes a_j\otimes[b_i,b_j].
 \end{aligned}$$

Let
\begin{equation}\label{R}
r=\sum_{\alpha\in\bf R^+}(x_{\alpha}\otimes x_{-\alpha}-x_{-\alpha}\otimes x_{\alpha})\in\frak n\otimes\frak n,
\end{equation}
where $(x_{\alpha}|x_{-\alpha})=1$. Note that $r$ is skew symmetric. It is well-known that
$[[r,r]]$ is a $\frak g$-invariant element of $\frak g\otimes\frak g\otimes\frak g$, that is, $x.[[r,r]]=0$ for all $x\in\frak g$, where the action of $\frak g$ on $\frak g\otimes\frak g\otimes\frak g$ is by the adjoint representation in each factor:
$$x_.(a\otimes b\otimes c)=[x,a]_\frak g\otimes b\otimes c+a\otimes[x,b]_\frak g\otimes c+a\otimes b\otimes [x,c]_\frak g.$$ (See \cite[Proposition 2.1.2, Example 2.3.7 and \S 2.1 B]{ChPr} and \cite[1.2]{HoLeT}.)
 Thus $[[r,r]]$ is also a $\frak g'$-invariant element of $\frak g'\otimes\frak g'\otimes\frak g'$ by
 (\ref{Gprime}). Here we prove the following statement.

\begin{prop}\label{MCYBE}
The element $[[r,r]]\in\frak d\otimes \frak d\otimes\frak d$ is also $\frak d$-invariant.
\end{prop}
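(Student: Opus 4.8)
The plan is to verify $x.[[r,r]]=0$ for $x$ running over the spanning set $\{h_\lambda\}\cup\{k_\lambda\}\cup\{x_\gamma:\gamma\in{\bf R}\}$ of $\frak d$; since the adjoint action on $\frak d\otimes\frak d\otimes\frak d$ is linear in $x$ and $\frak d$-invariance is preserved under brackets of the acting elements, this suffices. Throughout I write $[[r,r]]_{\frak g}$, $[[r,r]]_{\frak g'}$, $[[r,r]]_{\frak d}$ for the element $[[r,r]]$ computed with $[\cdot,\cdot]_{\frak g}$, $[\cdot,\cdot]_{\frak g'}$, $[\cdot,\cdot]$ respectively, viewing all three inside $\frak d\otimes\frak d\otimes\frak d$ via $\frak g,\frak g'\subseteq\frak d$. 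The first step is to record the averaging identity
$$[[r,r]]_{\frak d}=\tfrac12\bigl([[r,r]]_{\frak g}+[[r,r]]_{\frak g'}\bigr).$$
Because $r\in\frak n\otimes\frak n$, every summand of $[[r,r]]$ carries exactly one Lie bracket of two root vectors in a single tensor slot, the other two slots being untouched root vectors; and by the last line of (\ref{Lie bracket}) one has $[x_\alpha,x_\beta]=\tfrac12([x_\alpha,x_\beta]_{\frak g}+[x_\alpha,x_\beta]_{\frak g'})$ for all $\alpha,\beta$. Hence each of $[r_{12},r_{13}]$, $[r_{12},r_{23}]$, $[r_{13},r_{23}]$ is the average of its $\frak g$- and $\frak g'$-versions, and so is their sum.

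Next I handle $\frak h$ and $\frak k$ by a weight argument. Being $\frak g$- (resp.\ $\frak g'$-) invariant, $[[r,r]]_{\frak g}$ and $[[r,r]]_{\frak g'}$ are in particular invariant under $\text{ad}(\frak h)$, so every monomial occurring in them has total root weight $0$, the Cartan factors $h_\beta,k_\beta$ counting with weight $0$. By (\ref{Lie bracket}), $h_\lambda$ acts on a monomial of total root weight $\theta$ by the scalar $-(\Phi_-\lambda|\theta)=(\lambda|\theta)-u(\lambda,\theta)$ and $k_\lambda$ by $(\Phi_+\lambda|\theta)=(\lambda|\theta)+u(\lambda,\theta)$, using (\ref{Phi}); both vanish at $\theta=0$. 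Therefore $h_\lambda.[[r,r]]_{\frak d}=k_\lambda.[[r,r]]_{\frak d}=0$.

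The remaining case $x=x_\gamma$ is the main obstacle, since here the $\frak d$-action differs from both the $\frak g$- and the $\frak g'$-actions, precisely on Cartan factors and on the bracket $[x_\gamma,x_{-\gamma}]$. I would introduce the correction operators $D_\gamma=\text{ad}_{\frak d}(x_\gamma)-\text{ad}_{\frak g}(x_\gamma)$ on $\frak g$ and $D'_\gamma=\text{ad}_{\frak d}(x_\gamma)-\text{ad}_{\frak g'}(x_\gamma)$ on $\frak g'$; from (\ref{Lie bracket}) and (\ref{X}) (with the normalization $(x_\alpha|x_{-\alpha})=1$ of (\ref{R}), giving $[x_\gamma,x_{-\gamma}]_{\frak g}=h_\gamma$, $[x_\gamma,x_{-\gamma}]_{\frak g'}=k_\gamma$, $[x_\gamma,x_{-\gamma}]=\tfrac12(h_\gamma+k_\gamma)$) one finds that $D_\gamma$ is supported on $\frak h$ and on $x_{-\gamma}$ with $D_\gamma(h_\mu)=u(\mu,\gamma)x_\gamma$ and $D_\gamma(x_{-\gamma})=\tfrac12(k_\gamma-h_\gamma)$, while symmetrically $D'_\gamma(k_\mu)=-u(\mu,\gamma)x_\gamma$ and $D'_\gamma(x_{-\gamma})=\tfrac12(h_\gamma-k_\gamma)$. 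Letting $\mathcal D_\gamma,\mathcal D'_\gamma$ denote the slotwise sums $\sum_i 1^{\otimes(i-1)}\otimes D_\gamma\otimes 1^{\otimes(3-i)}$ and applying $x_\gamma.[[r,r]]_{\frak g}=0$, $x_\gamma.[[r,r]]_{\frak g'}=0$ together with the averaging identity, the claim reduces to the single vanishing
$$\mathcal D_\gamma\bigl([[r,r]]_{\frak g}\bigr)+\mathcal D'_\gamma\bigl([[r,r]]_{\frak g'}\bigr)=0.$$

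To finish I would split $[[r,r]]_{\frak g}=T_{\frak n}+T_{\frak h}$ with $T_{\frak n}\in\frak n\otimes\frak n\otimes\frak n$ and $T_{\frak h}$ the one-Cartan part, so that $[[r,r]]_{\frak g'}=T_{\frak n}+T_{\frak k}$ with $T_{\frak k}$ obtained from $T_{\frak h}$ by $h_\beta\mapsto k_\beta$. The $\frak n^{\otimes3}$ contributions cancel at once, since $D_\gamma$ and $D'_\gamma$ agree up to sign on $x_{-\gamma}$, whence $\mathcal D_\gamma(T_{\frak n})=-\mathcal D'_\gamma(T_{\frak n})$; so it remains to show $\mathcal D_\gamma(T_{\frak h})+\mathcal D'_\gamma(T_{\frak k})=0$. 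Here one uses the explicit Cartan part obtained by extracting the Cartan slot of each of $[r_{12},r_{13}],[r_{12},r_{23}],[r_{13},r_{23}]$, namely $T_{\frak h}=\sum_{\beta\in{\bf R}}\bigl(h_\beta\otimes x_\beta\otimes x_{-\beta}-x_\beta\otimes h_\beta\otimes x_{-\beta}+x_\beta\otimes x_{-\beta}\otimes h_\beta\bigr)$. The $u$-proportional terms (from $D_\gamma$ on an $h_\mu$ and from $D'_\gamma$ on the matching $k_\mu$) cancel in pairs, and the two-Cartan terms (from $D_\gamma,D'_\gamma$ acting on an $x_{\mp\gamma}$ slot next to a Cartan) cancel after grouping by the position of $x_\gamma$: in each group the four surviving contributions sum to $\tfrac12(\pm)(h_\gamma\otimes h_\gamma-h_\gamma\otimes k_\gamma-k_\gamma\otimes h_\gamma+k_\gamma\otimes k_\gamma+\cdots)=0$. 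This final cancellation is the only place genuinely using the shape of $T_{\frak h}$ rather than mere $\frak g$-invariance, and it is where both the twist $u$ and the $h_\gamma/k_\gamma$ asymmetry of $\frak d$ must conspire; everything else is formal.
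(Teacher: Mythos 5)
Your proof is correct, and it shares the paper's overall skeleton: handle $\frak h\oplus\frak k$ by linearity of the weight pairing, and reduce the case $x_\gamma\in\frak n$ to the known $\frak g$- and $\frak g'$-invariance of $[[r,r]]$ via the averaging $[\cdot,\cdot]=2^{-1}([\cdot,\cdot]_{\frak g}+[\cdot,\cdot]_{\frak g'})$. Where you genuinely diverge is in how that reduction is justified, and your version is the more complete one. The paper asserts the identity
$$x_\gamma.[[r,r]]=2^{-1}\bigl[x_\gamma.\bigl([[r,r]]_{\frak g}\bigr)+x_\gamma.\bigl([[r,r]]_{\frak g'}\bigr)\bigr]$$
(the right-hand actions and brackets taken entirely in $\frak g$, resp.\ $\frak g'$) directly ``by (\ref{D}) and the third equation of (\ref{Lie bracket})''. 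But (\ref{D}) only covers the summands in which $x_\gamma$ hits the slot already carrying a bracket; it says nothing about the summands where the action creates a \emph{second} Cartan-valued slot, such as $[x_\gamma,x_{-\gamma}]\otimes[x_\gamma,x_{-\gamma}]\otimes x_\gamma$. For those, the $\frak d$-computation gives $\tfrac14(h_\gamma+k_\gamma)\otimes(h_\gamma+k_\gamma)\otimes x_\gamma$ while the average of the $\frak g$- and $\frak g'$-computations gives $\tfrac12(h_\gamma\otimes h_\gamma+k_\gamma\otimes k_\gamma)\otimes x_\gamma$, a discrepancy proportional to $(h_\gamma-k_\gamma)\otimes(h_\gamma-k_\gamma)\otimes x_\gamma$; the identity is therefore not a term-by-term consequence of (\ref{D}) but requires a global cancellation. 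That cancellation is exactly what your correction operators $D_\gamma,D'_\gamma$ and the explicit Cartan part $T_{\frak h}$ establish: your formulas for $D_\gamma$, $D'_\gamma$ and for $T_{\frak h}$ are correct, the $u$-terms cancel in pairs, and each group of four two-Cartan contributions sums to zero. So your route buys rigor precisely at the point where the paper is terse. One repair is needed in your last display: as written, $h_\gamma\otimes h_\gamma-h_\gamma\otimes k_\gamma-k_\gamma\otimes h_\gamma+k_\gamma\otimes k_\gamma=(h_\gamma-k_\gamma)\otimes(h_\gamma-k_\gamma)$ is \emph{not} zero. What actually happens, say in the group with $x_\gamma$ in the third slot, is
$$-h_\gamma\otimes\tfrac12(k_\gamma-h_\gamma)+\tfrac12(k_\gamma-h_\gamma)\otimes h_\gamma-k_\gamma\otimes\tfrac12(h_\gamma-k_\gamma)+\tfrac12(h_\gamma-k_\gamma)\otimes k_\gamma=0,$$
where the $h_\gamma\otimes h_\gamma$ and $k_\gamma\otimes k_\gamma$ terms cancel within the $T_{\frak h}$- and $T_{\frak k}$-pairs respectively, and the mixed terms cancel across the two pairs. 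With that display corrected, your argument is a complete proof --- indeed one that supplies detail the paper's own proof leaves implicit.
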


\begin{proof}
It is easy to check that
$$x.[r_{12},r_{13}]=0,\  x.[r_{12},r_{23}]=0,\  x.[r_{13},r_{23}]=0$$ for each  $x\in\frak h\oplus\frak k$. For instance,
let $x=h_\lambda\in\frak h$ and note that
$$\begin{aligned}
\ [r_{12},r_{13}]
=\sum_{\alpha,\beta\in{\bf R}^+}[x_\alpha,x_\beta]&\otimes x_{-\alpha}\otimes x_{-\beta}
-[x_\alpha,x_{-\beta}]\otimes x_{-\alpha}\otimes x_{\beta}\\
&-[x_{-\alpha},x_\beta]\otimes x_\alpha\otimes x_{-\beta}
+[x_{-\alpha},x_{-\beta}]\otimes x_\alpha\otimes x_\beta.
\end{aligned}$$
If $Y$ is the first term of $[r_{12},r_{13}]$, then
$$\begin{aligned}
x.Y&=\sum h_\lambda.([x_\alpha,x_\beta]\otimes x_{-\alpha}\otimes x_{-\beta})\\
&=\sum ([h_\lambda, [x_\alpha,x_\beta]]\otimes x_{-\alpha}\otimes x_{-\beta}+
[x_\alpha,x_\beta]\otimes [h_\lambda,x_{-\alpha}]\otimes
x_{-\beta}+[x_\alpha,x_\beta]\otimes x_{-\alpha}\otimes [h_\lambda,x_{-\beta}])\\
&=\sum(-(\Phi_-\lambda|\alpha+\beta)+(\Phi_-\lambda|\alpha)+(\Phi_-\lambda|\beta))[x_\alpha,x_\beta]\otimes x_{-\alpha}\otimes x_{-\beta}\\
&=0.
\end{aligned}$$
It remains to prove that $x_\gamma.[[r,r]]=0$ for all  $x_\gamma\in\frak n$.
Observe that
$$\begin{aligned}
x_\gamma.[[r,r]]&=x_\gamma.([r_{12},r_{13}]+[r_{12},r_{23}]+[r_{13},r_{23}])\\
&=2^{-1}[x_\gamma.([r_{12},r_{13}]_{\frak g}+[r_{12},r_{23}]_{\frak g}+[r_{13},r_{23}]_{\frak g})+x_\gamma.([r_{12},r_{13}]_{\frak g'}+[r_{12},r_{23}]_{\frak g'}+[r_{13},r_{23}]_{\frak g'})]
\end{aligned}$$
by  (\ref{D}) and the third equation of (\ref{Lie bracket}), where the last two actions are the actions of $\frak g$ and $\frak g'$ respectively.
Hence $x_\gamma.[[r,r]]=0$ for all  $x_\gamma\in\frak n$  since $x_\gamma.[[r,r]]=0$ in $\frak g\otimes\frak g\otimes\frak g$ and $\frak g'\otimes\frak g'\otimes\frak g'$.
\end{proof}

\subsection{}\label{TORUCS}
Let ${\bf Q}$ and
${\bf P}$ denote the root  and the weight lattices respectively,
$G$  a connected  complex semisimple algebraic group associated with  $\frak g$, $H\subset G$ a torus associated with $\frak h$ and let ${\bf L}$ be the group of characters of $H$, as in \cite[1.1]{HoLeT}. Note that ${\bf L}$ is a lattice such that ${\bf Q}\subseteq{\bf L}\subseteq {\bf P}$ by \cite[1.2]{HoLeT} and that ${\bf L}$ spans $\frak h^*$ since the set of simple roots $\{\alpha_i\}_{i=1}^n$ forms a basis of $\frak h^*$.

Let $\mathcal{C}(\frak g)$ be the category of $\frak g$-module homomorphisms as morphisms and the following (left) $\frak g$-modules as objects: the finite dimensional
 $\frak g$-modules consisting of finite direct sums of finite dimensional irreducible highest
weight $\frak g$-modules  $V(\Lambda)$ with highest weight $\Lambda\in {\bf L}^+={\bf L}\cap {\bf P}^+$, where ${\bf P}^+=\{\lambda\in{\bf P}|\lambda(h_i)\in\Bbb Z_{\geq0}\text{ for all } i=1,\ldots,n\}$  the set of dominant integral weights.
Here by $M\in\mathcal{C}(\frak g)$ we mean that $M$ is an  object of $\mathcal{C}(\frak g)$.
Note that $\mathcal{C}(\frak g)$ is closed under  finite direct sums, finite tensor products and  the formation duals.
(If $M,N\in\mathcal{C}(\frak g)$ then $M\otimes N$ and $M^*$ have the left $\frak g$-module structure with
$$\begin{aligned}
x(y\otimes z)&=(xy)\otimes z+y\otimes(xz),\\
(xf)(y)&=-f(xy)
\end{aligned}$$
 for all $f\in M^*$, $x\in\frak g$, $y\in M$ and $z\in N$.)
Moreover every  $M\in\mathcal{C}(\frak g)$ can be written by a direct sum of weight spaces $M=\oplus_{\mu\in{\bf L}}M_\mu$, where
\begin{equation}\label{H}
M_\mu=\{v\in M\ |\ h_\lambda v=(\lambda|\mu)v \text{ for }h_\lambda\in \frak h\}.
\end{equation}

Given $M=\oplus_{\mu\in{\bf L}}M_\mu\in\mathcal{C}(\frak g)$, note that each element  $f\in(M_\mu)^*$ is considered as the element
$f'\in M^*$ defined by
$$f'|_{M_\mu}=f,\ \ f'(M_\nu)=0\ \text{for all }\mu\neq\nu\in{\bf L}.$$
For instance, for $0\neq x\in M_\mu$, the dual map $x^*$ defined in $M_\mu$ is identified with the dual map $x^*$ defined in $M$.

\begin{lem}\label{WEIG}
Let $M=\oplus_{\nu\in{\bf L}}M_\nu\in\mathcal{C}(\frak g)$.

(1) Let $f\in (M_\mu)^*$. Then $f\in (M^*)_{-\mu}$.
In particular, if $f\in (M^*)_\mu$, $x\in M_\nu$ and $f(x)\neq0$ then $-\mu=\nu$.

(2) There exists a right $\frak g$-module structure in $M^*$: For $f\in M^*$, $x\in\frak g$ and $y\in M$,
$$(fx)(y)=f(xy).$$
In particular, if $f\in (M_\mu)^*$ and $x_\alpha\in\frak n$ then $fx_\alpha\in (M^*)_{-\mu+\alpha}$.
\end{lem}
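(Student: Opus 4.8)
The plan is to treat the two statements separately, both reducing to short computations with the given module structures. For part (1), I would fix $f\in(M_\mu)^*$, regarded as an element of $M^*$ that vanishes on every $M_\nu$ with $\nu\neq\mu$, and simply compute the left action of $h_\lambda$ on it. Evaluating on a weight vector $y\in M_\nu$ gives $(h_\lambda f)(y)=-f(h_\lambda y)=-(\lambda|\nu)f(y)$; since $f(y)=0$ unless $\nu=\mu$, in every case this equals $-(\lambda|\mu)f(y)=(\lambda|-\mu)f(y)$. As $y$ and $\lambda$ are arbitrary, $h_\lambda f=(\lambda|-\mu)f$, i.e.\ $f\in(M^*)_{-\mu}$. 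For the ``in particular'' clause I would argue by nondegeneracy: if $f\in(M^*)_\mu$ and $x\in M_\nu$ with $f(x)\neq0$, then comparing $(h_\lambda f)(x)=(\lambda|\mu)f(x)$ with $(h_\lambda f)(x)=-f(h_\lambda x)=-(\lambda|\nu)f(x)=(\lambda|-\nu)f(x)$ forces $(\lambda|\mu)=(\lambda|-\nu)$ for all $\lambda$, whence $\mu=-\nu$ because $(\cdot|\cdot)$ is nondegenerate.

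For part (2) I would first verify that $(fx)(y)=f(xy)$ really defines a right $\frak g$-module structure, i.e.\ that $f[x,x']=(fx)x'-(fx')x$. Evaluating both sides on $y\in M$ and using that $M$ is a left $\frak g$-module (so $[x,x']y=x(x'y)-x'(xy)$) gives $(f[x,x'])(y)=f([x,x']y)=f(x(x'y))-f(x'(xy))$, while $((fx)x'-(fx')x)(y)=(fx)(x'y)-(fx')(xy)=f(x(x'y))-f(x'(xy))$; the two agree, so the axiom holds.

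For the weight statement in (2), the key input is that a root vector raises weights by its root: from $[h_\lambda,x_\alpha]_{\frak g}=(\lambda|\alpha)x_\alpha$ and $v\in M_\nu$ one gets $h_\lambda(x_\alpha v)=(\lambda|\alpha+\nu)x_\alpha v$, so $x_\alpha M_\nu\subseteq M_{\nu+\alpha}$. Then for $f\in(M_\mu)^*$ and $y\in M_\nu$ we have $(fx_\alpha)(y)=f(x_\alpha y)\in f(M_{\nu+\alpha})$, which vanishes unless $\nu+\alpha=\mu$; hence $fx_\alpha$ is supported on $M_{\mu-\alpha}$, that is $fx_\alpha\in(M_{\mu-\alpha})^*$, and part (1) then places it in $(M^*)_{-(\mu-\alpha)}=(M^*)_{-\mu+\alpha}$.

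I do not expect any genuine obstacle here; everything is a standard consequence of the definitions. The only points demanding care are the sign conventions: the minus sign in the dual action $(xf)(y)=-f(xy)$, which sends weight $\mu$ to $-\mu$, together with the fact that $x_\alpha$ raises the weight in $M$ but therefore lowers it after dualizing, so that the final weight comes out as $-\mu+\alpha$ rather than $\mu-\alpha$ or $-\mu-\alpha$.
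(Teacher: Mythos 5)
Your proof is correct, and its skeleton is the same as the paper's: both parts reduce to direct computations with the dual action $(xf)(y)=-f(xy)$ and the weight-space definition, and your verification of the first claim of (1) and of the right-module axiom in (2) are literally the paper's computations. You diverge only in the two ``in particular'' clauses. For (1), the paper expands $f\in(M^*)_\mu$ in a dual basis of weight vectors and uses the first claim to see that every coefficient attached to a weight $\nu$ with $-\nu\neq\mu$ must vanish, i.e. $(M^*)_\mu\subseteq(M_{-\mu})^*$, and then reads off $x\in M_{-\mu}$; you instead equate the two evaluations of $(h_\lambda f)(x)$ and appeal to nondegeneracy of $(\cdot|\cdot)$ on $\frak h^*$. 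Your route is shorter and basis-free; the paper's buys the slightly stronger structural fact that each weight space of $M^*$ is exactly the dual of the opposite weight space of $M$. For the last claim of (2), the paper computes the $h_\lambda$-eigenvalue of $fx_\alpha$ directly on the single weight space $M_{-\alpha+\mu}$ where $fx_\alpha$ can be nonzero, while you first prove $x_\alpha M_\nu\subseteq M_{\nu+\alpha}$, conclude $fx_\alpha\in(M_{\mu-\alpha})^*$, and then cite part (1); the two are equivalent, and yours has the small advantage of reusing (1) rather than repeating an eigenvalue computation. Your closing remark about the sign conventions identifies exactly where errors would occur, and you have the signs right.
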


\begin{proof}
(1) For any $h_\lambda\in\frak h$ and $y\in M_\mu$,
$$(h_\lambda f)(y)=-f(h_\lambda y)=-(\lambda|\mu)f(y)=(\lambda|-\mu)f(y)
$$
and thus $h_\lambda f=(\lambda|-\mu)f$. It follows that $f\in (M^*)_{-\mu}$.

Let $\{y_{\nu i}\}_i$ be a basis of $M_\nu$ for each $\nu\in{\bf L}$. Thus the set $\{y_{\nu i}\}_{\nu,i}$ is a basis of $M=\oplus_{\nu\in{\bf L}}M_\nu$
and $f\in (M^*)_\mu$ is uniquely expressed by a $\Bbb C$-linear combination $f=\sum_{\nu,i}a_{\nu i}y^*_{\nu i}$ for some
$a_{\nu i}\in\Bbb C.$ Since $f\in (M^*)_\mu$ and the dual map $y^*_{\nu i}$ is an element of $(M^*)_{-\nu}$ by the previous result,
we have that $a_{\nu i}=0$ for all $\nu i$ with $-\nu\neq \mu$ and thus $f=\sum_i a_{-\mu i}y^*_{-\mu i}$.
It follows that $f(M_\nu)=0$ for all $-\mu\neq \nu$.
The final statement is now clear:
since $f(x)\neq0$, $x\in M_{-\mu}$ and thus $\nu=-\mu$.

(2) For any $f\in M^*$, $x, x'\in\frak g$ and $y\in M$,
 $$\left((fx)x'-(fx')x\right)(y)=f([x,x']_\frak g y)=(f[x,x']_\frak g)(y).$$
 Hence $M^*$ is a right $\frak g$-module.
 If $f\in (M_\mu)^*$, $x_\alpha\in\frak n$, $h_\lambda\in\frak h$ and $y\in M_{-\alpha+\mu}$, then
 $$
 \left(h_\lambda(fx_\alpha)\right)(y)=-f(x_\alpha h_\lambda y)=-(\lambda|-\alpha+\mu)(fx_\alpha)(y).
 $$
 Hence $fx_\alpha\in (M^*)_{-\mu+\alpha}$.
\end{proof}

The following proposition shows that $M\in\mathcal{C}(\frak g)$ has a (left) $\frak d$-module structure.

\begin{prop}
Let $M=\oplus_{\mu\in{\bf L}}M_\mu$ be a finite direct sum of finite dimensional irreducible highest
weight $\frak g$-modules  $V(\Lambda)$ with highest weight $\Lambda\in {\bf L}^+$. Define an action of $\frak d$ on $M$ as follows:
For $z_\nu\in M_\nu$ and $h_\lambda\in\frak h$, $k_\lambda\in\frak k$, $x_\alpha\in\frak n$,
\begin{equation}\label{E}
\begin{array}{l}
h_\lambda\cdot z_\nu={(\Phi_+\nu| \lambda)}z_\nu, \\
 k_\lambda\cdot z_\nu={-(\Phi_-\nu|\lambda)}z_\nu,\\
x_\alpha\cdot z_\nu=x_\alpha z_\nu,
\end{array}
\end{equation}
where  the right hand side $x_\alpha z_\nu$ of the third equation is the action inside $\frak g$.
Then $M$ is a $\frak d$-module with action (\ref{E}).
\end{prop}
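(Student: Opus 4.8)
The plan is to verify directly the defining axiom of a Lie module. Writing $\rho\colon\frak d\to\mathrm{End}(M)$ for the linear map determined by \eqref{E}, I must show that $\rho([a,b])=\rho(a)\rho(b)-\rho(b)\rho(a)$ for all $a,b\in\frak d$. Since both sides are bilinear in $(a,b)$ and $\frak d=\frak h\oplus\frak k\oplus\frak n$, it suffices to check the identity when $a,b$ range over the spanning elements $h_\lambda,k_\lambda$ and the root vectors $x_\alpha$, evaluating throughout on a weight vector $z_\nu\in M_\nu$. The bookkeeping device used repeatedly is that the $\frak g$-action raises weights, that is $x_\alpha\cdot z_\nu=x_\alpha z_\nu\in M_{\nu+\alpha}$, which follows from $[h_\lambda,x_\alpha]_{\frak g}=(\alpha|\lambda)x_\alpha$ together with \eqref{H}.

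First, when $a,b\in\frak h\oplus\frak k$ both act on $z_\nu$ by scalars (the first two lines of \eqref{E}), so $\rho(a)$ and $\rho(b)$ commute and the left-hand side vanishes because $[a,b]=0$ in $\frak d$. The substantive mixed cases are $(h_\lambda,x_\alpha)$ and $(k_\lambda,x_\alpha)$, where the weight shift is essential: since $x_\alpha\cdot z_\nu\in M_{\nu+\alpha}$, applying $h_\lambda$ afterwards produces the scalar $(\Phi_+(\nu+\alpha)|\lambda)$, whereas applying $h_\lambda$ first gives $(\Phi_+\nu|\lambda)$, and by linearity of $\Phi_+$ the difference telescopes to $(\Phi_+\alpha|\lambda)\,x_\alpha z_\nu$. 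Comparing with $\rho([h_\lambda,x_\alpha])z_\nu=-(\Phi_-\lambda|\alpha)\,x_\alpha z_\nu$ from \eqref{Lie bracket}, this case reduces to the scalar identity $(\Phi_+\alpha|\lambda)=-(\Phi_-\lambda|\alpha)$, which is immediate from \eqref{Phi} using the skew-symmetry of $u$ and the symmetry of $(\cdot|\cdot)$. The $(k_\lambda,x_\alpha)$ case is identical in form and reduces to $-(\Phi_-\alpha|\lambda)=(\Phi_+\lambda|\alpha)$, again an instance of \eqref{Phi}.

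The remaining case $(x_\alpha,x_\beta)$ is where the particular shape of the bracket in \eqref{Lie bracket} does the work, and I expect it to be the main obstacle. The operator $\rho(x_\alpha)\rho(x_\beta)-\rho(x_\beta)\rho(x_\alpha)$ acting on $z_\nu$ is simply $[x_\alpha,x_\beta]_{\frak g}z_\nu$, computed inside $\frak g$. When $\alpha+\beta\neq0$ one has $[x_\alpha,x_\beta]=[x_\alpha,x_\beta]_{\frak g}\in\frak n$ by \eqref{A}, so $\rho([x_\alpha,x_\beta])z_\nu$ is the same $\frak g$-action and both sides agree. The delicate sub-case is $\alpha+\beta=0$: here \eqref{Lie bracket} and \eqref{X} give $[x_\alpha,x_{-\alpha}]=2^{-1}(x_\alpha|x_{-\alpha})(h_\alpha+k_\alpha)$, so by \eqref{E} the value $\rho([x_\alpha,x_{-\alpha}])z_\nu$ equals $2^{-1}(x_\alpha|x_{-\alpha})\big[(\Phi_+\nu|\alpha)-(\Phi_-\nu|\alpha)\big]z_\nu$, and the bracketed difference of scalars collapses by \eqref{Phi} to $2(\nu|\alpha)$, leaving $(x_\alpha|x_{-\alpha})(\nu|\alpha)z_\nu$. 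On the other hand, the left-hand side is $(x_\alpha|x_{-\alpha})\,h_\alpha z_\nu=(x_\alpha|x_{-\alpha})(\nu|\alpha)z_\nu$ by \eqref{X} and \eqref{H}, so the two coincide. Assembling the cases shows that $\rho$ respects brackets, hence $M$ is a $\frak d$-module, which completes the proof.
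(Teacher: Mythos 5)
Your proposal is correct and takes essentially the same route as the paper's own proof: a case-by-case verification of the bracket-compatibility on the spanning elements $h_\lambda$, $k_\lambda$, $x_\alpha$, using that $\frak h\oplus\frak k$ acts by commuting scalars, the weight shift $x_\alpha z_\nu\in M_{\nu+\alpha}$ for the mixed cases via \eqref{Phi}, and the splitting of $[x_\alpha,x_\beta]$ into the cases $\alpha+\beta\neq0$ (via \eqref{A}) and $\alpha+\beta=0$ (via \eqref{X} and the collapse $(\Phi_+\nu|\alpha)-(\Phi_-\nu|\alpha)=2(\nu|\alpha)$). The only cosmetic difference is that you phrase the verification in terms of a representation map $\rho$ rather than checking the identity $[a,b]\cdot z=a\cdot(b\cdot z)-b\cdot(a\cdot z)$ directly, which changes nothing of substance.
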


\begin{proof}
By (\ref{Lie bracket}) and (\ref{E}), it is easy to check that
$$\begin{aligned}
\ [h_\lambda,h_\mu]\cdot z_\nu&=0=h_\lambda\cdot (h_\mu\cdot z_\nu)-h_\mu\cdot (h_\lambda\cdot z_\nu), &&\\
\ [h_\lambda,k_\mu]\cdot z_\nu&=0=h_\lambda\cdot( k_\mu\cdot z_\nu)-k_\mu\cdot( h_\lambda\cdot z_\nu),&&\\
\ [k_\lambda,k_\mu]\cdot z_\nu&=0=k_\lambda\cdot( k_\mu\cdot z_\nu)-k_\mu\cdot( k_\lambda\cdot z_\nu). &&\\
\end{aligned} $$
Now observe that
$$\begin{aligned}
\  [h_\lambda, x_\alpha]\cdot z_\nu&={-(\Phi_-\lambda|\alpha)}x_\alpha\cdot z_\nu&&(\text{by }(\ref{Lie bracket}))\\
&=(\Phi_+\alpha|\lambda)x_\alpha z_\nu&&(\text{by (\ref{Phi}) and (\ref{E})})\\
&= h_\lambda\cdot( x_\alpha z_\nu) -x_\alpha( h_\lambda\cdot z_\nu),&&(\text{by (\ref{E})})\\
&= h_\lambda\cdot( x_\alpha\cdot z_\nu) -x_\alpha\cdot( h_\lambda\cdot z_\nu).&&(\text{by (\ref{E})})
\end{aligned} $$
Similarly, we have
$$\ [k_\lambda, x_\alpha]\cdot z_\nu={(\Phi_+\lambda|\alpha)}x_\alpha z_\nu=-(\Phi_-\alpha|\lambda)x_\alpha z_\nu= k_\lambda \cdot (x_\alpha\cdot z_\nu) -x_\alpha\cdot( k_\lambda\cdot z_\nu).$$

Let  $x_\alpha, x_\beta\in\frak n$.
If $\alpha+\beta\neq0$ then $[x_\alpha,x_\beta]\in\frak n$ and
$$[x_\alpha,x_\beta]\cdot z_\nu=[x_\alpha,x_\beta]_{\frak g}z_\nu=x_\alpha\cdot( x_\beta\cdot z_\nu)-x_\beta\cdot( x_\alpha\cdot z_\nu)$$
by (\ref{A}). Suppose that $\alpha+\beta=0$. We may assume that $\alpha\in{\bf R}^+$. Then $[x_\alpha,x_{-\alpha}]_{\frak g}=(x_\alpha|x_{-\alpha})h_\alpha\in\frak h$ and $[x_\alpha,x_{-\alpha}]_{\frak g'}=(x_{\alpha}|x_{-\alpha})k_\alpha\in\frak k$
by (\ref{X}) and
 $$\begin{aligned}
\ [x_\alpha,x_{-\alpha}]\cdot z_\nu&=2^{-1}(x_{\alpha}|x_{-\alpha})(h_\alpha+k_\alpha)\cdot z_\nu\\
& =(x_{\alpha}|x_{-\alpha})(\alpha|\nu)z_\nu=[x_\alpha,x_{-\alpha}]_{\frak g}z_\nu \\
&=x_\alpha\cdot( x_{-\alpha}\cdot z_\nu)- x_{-\alpha}\cdot( x_\alpha\cdot z_\nu).
 \end{aligned}$$
It completes the proof.
\end{proof}

\begin{notn}
(1) Denote by $\mathcal{C}(\frak d)$  the category $\mathcal{C}(\frak g)$ whenever all objects   $M\in\mathcal{C}(\frak g)$ are considered as the $\frak d$-modules $M$ with action (\ref{E}). For simplicity, by $M\in\mathcal{C}(\frak d)$ we mean that $M$ is an object of $\mathcal{C}(\frak d)$ and we will omit the dot $\lq\cdot$\rq \ denoting the module action of $\frak d$ on
$M\in\mathcal{C}(\frak d)$.

(2)
Let $U(\frak d)$ be the universal enveloping algebra of $\frak d$. Note that a vector space $M$ is a $\frak d$-module if and only if $M$ is a $U(\frak d)$-module.
For $M\in\mathcal{C}(\frak d), v\in M$ and $f\in M^*$,
define
$$c_{f,v}=c_{f,v}^M:U(\frak d)\longrightarrow \Bbb C,\ \ \ c_{f,v}(z)=f(zv)$$
and let $\Bbb C[G]$ be the $\Bbb C$-vector space spanned by all elements of the form
$$c_{f,v}^M,\ \ \ M\in\mathcal{C}(\frak d),\  v\in M, \ f\in M^*.$$
\end{notn}

Recall the definitions of a bigraded Hopf algebra and a (commutative) Poisson Hopf algebra given in \cite[2.1]{HoLeT} and \cite[Definition 1.1]{ChOh1}.
Let $K$ be an (additive) abelian group. A Hopf algebra $A=(A,\iota,m,\epsilon,\Delta, S)$ over a field ${\bf k}$ is
said to be
a {\it $K$-bigraded Hopf algebra} if it is equipped with a  $K\times K$-grading
$A=\underset{(\lambda,\mu)\in{K}\times{K}}{\bigoplus}A_{\lambda,\mu}$  such that
\begin{itemize}
\item[(i)] ${\bf k}\subseteq A_{0,0},\ \ \ A_{\lambda,\mu}A_{\lambda',\mu'}\subseteq A_{\lambda+\lambda', \mu+\mu'}$.
\item[(ii)] $\Delta(A_{\lambda,\mu})\subseteq \sum_{\nu\in{K}}A_{\lambda,\nu}\otimes A_{-\nu,\mu}$.
\item[(iii)] $\lambda\neq-\mu$ implies $\epsilon(A_{\lambda,\mu})=0$.
\item[(iv)] $S(A_{\lambda,\mu})\subseteq A_{\mu,\lambda}$.
\end{itemize}
A Hopf algebra $A=(A,\iota,m,\epsilon,\Delta, S)$ is said to be a {\it Poisson Hopf algebra} if there exists a skew symmetric bilinear product
$\{\cdot,\cdot\}$ on $A$ if $(A,\{\cdot,\cdot\})$ is a Poisson algebra such that
\begin{itemize}
\item[(v)]  $\Delta$ is a Poisson algebra homomorphism. Note that the Poisson bracket in $A\otimes A$ is given as follows: $$\{a_1\otimes a_2,b_1\otimes b_2\}=\{a_1,b_1\}\otimes a_2b_2+a_1b_1\otimes\{a_2,b_2\}.$$
\end{itemize}
A Poisson Hopf algebra $A=(A,\iota,m,\epsilon,\Delta, S, \{\cdot,\cdot\})$ is said to be {\it $K$-bigraded} if $A$ satisfies (i)-(v) and
\begin{itemize}
\item[(vi)] $ \{A_{\lambda,\mu},A_{\lambda',\mu'}\}\subseteq A_{\lambda+\lambda', \mu+\mu'}$.
\end{itemize}

\begin{thm}\label{ALGEB}
The vector space  $\Bbb C[G]$ is an ${\bf L}$-bigraded commutative Hopf algebra with the following structure:
For $M, N\in\mathcal{C}(\frak d)$,
\begin{equation}\label{J}
\begin{aligned}
c_{f,v}^M+c_{g,w}^N&=c_{(f, g), (v, w)}^{M\oplus N}, & c_{f,v}^Mc_{g,w}^N&=c_{f\otimes g, v\otimes w}^{M\otimes N}, \\
\epsilon(c_{f,v}^M)&=f(v), & S(c_{f,v}^M)&=c_{v,f}^{M^*},\\
\Delta(c_{f,v}^M)&=\sum_i c_{f,v_i}^M\otimes c_{g_i,v}^M,&&
\end{aligned}
\end{equation}
where $\{v_i\}$ and $ \{g_i\}$ are  bases for $M$ and $M^*$ such that $g_i(v_j)=\delta_{ij}$ for all $i,j$.

For $M\in\mathcal{C}(\frak d)$, set
$$C(M)=\Bbb C\langle c_{f,v}\ |\ f\in M^*,v\in M\rangle,\ \ \
C(M)_{\lambda,\mu}=\Bbb C\langle c_{f,v}\ |\ f\in (M^*)_\lambda,v\in M_\mu\rangle.$$
Then, for $\lambda,\mu\in{\bf L}$,
\begin{equation}\label{K}
\Bbb C[G]_{\lambda,\mu}=\sum_{M\in\mathcal{C}(\frak d)} C(M)_{\lambda,\mu}, \ \ \ \Bbb C[G]=\underset{{(\lambda,\mu)\in{\bf L}\times{\bf L}}}{\bigoplus}\Bbb C[G]_{\lambda,\mu}.
\end{equation}

Moreover $\Bbb C[G]$ is finitely generated as a $\Bbb C$-algebra.
\end{thm}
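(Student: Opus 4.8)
The plan is to produce a finite generating set of matrix coefficients, following the classical proof that the coordinate ring of a semisimple algebraic group is affine. First I would reduce to irreducibles: by the additivity formula $c^{M\oplus N}_{(f,g),(v,w)}=c^M_{f,v}+c^N_{g,w}$ in (\ref{J}) and the definition of $\mathcal{C}(\frak d)$ (every object is a finite direct sum of the $V(\Lambda)$, $\Lambda\in{\bf L}^+$), the space $\Bbb C[G]$ is spanned by the coefficients $c^{V(\Lambda)}_{f,v}$ of the single irreducible modules $V(\Lambda)$. It therefore suffices to show that each such coefficient lies in the subalgebra generated by the coefficients of finitely many fixed modules.

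The next step is to choose those modules. Since ${\bf L}^+={\bf L}\cap{\bf P}^+$ is exactly the set of points of the lattice ${\bf L}$ lying in the rational polyhedral dominant cone spanned by the dominant integral weights, Gordan's lemma shows that ${\bf L}^+$ is finitely generated as a monoid; fix monoid generators $\Lambda_1,\dots,\Lambda_m$. I would then take the proposed generating set to be all coefficients $c^{V(\Lambda_i)}_{g,w}$, where $i=1,\dots,m$ and $w,g$ run over a basis of $V(\Lambda_i)$ and the dual basis respectively. This set is finite because each $V(\Lambda_i)$ is finite dimensional.

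The representation-theoretic heart is to show, for $\Lambda=\sum_ia_i\Lambda_i\in{\bf L}^+$, that $V(\Lambda)$ is a direct summand of $T:=V(\Lambda_1)^{\otimes a_1}\otimes\cdots\otimes V(\Lambda_m)^{\otimes a_m}$. Because the action (\ref{E}) leaves the weight-space grading and the $\frak n$-action of the underlying $\frak g$-module untouched, merely rescaling the $\frak h$- and $\frak k$-actions on each weight space, the category $\mathcal{C}(\frak d)$ is semisimple and decomposes tensor products exactly as $\mathcal{C}(\frak g)$ does. Hence the tensor product of the highest weight vectors generates a copy of $V(\Lambda)$ that, by complete reducibility, splits off: $T=V(\Lambda)\oplus T'$. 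Extending a functional $f\in V(\Lambda)^*$ by zero on $T'$ gives $c^{V(\Lambda)}_{f,v}=c^{T}_{\tilde f,v}$, since $zv$ stays in the submodule $V(\Lambda)$ for every $z\in U(\frak d)$; and by the multiplication rule in (\ref{J}) any coefficient of $T$ is a sum of products of coefficients of the factors $V(\Lambda_i)$. Combining these facts places every $c^{V(\Lambda)}_{f,v}$ in the subalgebra generated by the finite set above, which proves finite generation.

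I expect the main obstacle to be the representation-theoretic step: one must verify that $V(\Lambda)$ occurs as a genuine direct summand, not merely a subquotient, of the tensor product $T$, and this rests on complete reducibility of $\mathcal{C}(\frak d)$ together with highest-weight theory transferring intact from $\frak g$ to the twisted $\frak d$-action. The finite generation of the monoid ${\bf L}^+$ is the other essential input, but it is a standard application of Gordan's lemma.
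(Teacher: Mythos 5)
Your argument addresses only the final assertion of the theorem, namely that $\Bbb C[G]$ is finitely generated as a $\Bbb C$-algebra. The main content of the statement --- that the operations in (\ref{J}) are well defined on $\Bbb C[G]$ and give it a Hopf algebra structure, that this Hopf algebra is commutative, and that (\ref{K}) is a genuine direct sum decomposition making $\Bbb C[G]$ an ${\bf L}$-bigraded algebra --- is nowhere established; indeed your finite-generation argument freely invokes the addition and multiplication rules of (\ref{J}), so as written it presupposes part of what is to be proved. The paper settles these structural claims first: each $c^M_{f,v}$ lies in the Hopf dual $U(\frak d)^\circ$ because $M\in\mathcal{C}(\frak d)$ is finite dimensional; the closure of $\mathcal{C}(\frak d)$ under finite direct sums, finite tensor products and duals makes $\Bbb C[G]$ a sub-Hopf algebra of $U(\frak d)^\circ$ with exactly the structure (\ref{J}), by \cite[I.7.3 and I.7.4]{BrGo}; commutativity follows since $U(\frak d)$ is co-commutative; and the bigrading comes from the weight space decompositions of $M$ and $M^*$. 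Without some version of this, what you have is a proof of one corollary of the theorem, not of the theorem.

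On the part you do treat, your route is essentially the paper's: write $\Lambda\in{\bf L}^+$ in terms of finitely many generators of the monoid ${\bf L}^+$, realize $V(\Lambda)$ inside the corresponding tensor product of the generating modules, and extend functionals by zero so that coefficients of $V(\Lambda)$ become polynomials in coefficients of those finitely many finite-dimensional modules. Two remarks here. First, your insistence that $V(\Lambda)$ be a direct summand (not merely a submodule), justified by complete reducibility transferring from $\mathcal{C}(\frak g)$ to $\mathcal{C}(\frak d)$, is the right thing to say: extending $f$ by zero requires an invariant complement, a point the paper passes over by saying ``submodule.'' Second, your use of Gordan's lemma is actually more careful than the paper's claim that ${\bf L}$ admits a basis $\{\omega_i\}$ with ${\bf L}^+=\sum_i\Bbb Z_{\geq0}\omega_i$: such a basis need not exist when ${\bf Q}\subseteq{\bf L}\subsetneq{\bf P}$ (already for ${\bf L}={\bf Q}$ in type $A_2$ the monoid ${\bf Q}\cap{\bf P}^+$ requires three generators, namely $\alpha_1+\alpha_2$, $2\alpha_1+\alpha_2$ and $\alpha_1+2\alpha_2$, so it is not free on a lattice basis), whereas finite generation of the monoid always holds and suffices. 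So your finite-generation argument is sound and even repairs a small inaccuracy --- but you must still supply the Hopf-algebra, commutativity and bigrading parts before you have a proof of the stated theorem.
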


\begin{proof}
Note that $c^M_{f,v}$ is an element of the  Hopf dual $U(\frak d)^\circ$ since $M\in\mathcal{C}(\frak d)$ is finite dimensional. (Refer to \cite[I.9.5]{BrGo} for Hopf dual.) Since $\mathcal{C}(\frak d)$ is closed under finite direct
sums, finite tensor products and the formation  duals, $\Bbb C[G]$ is a sub-Hopf algebra of $U(\frak d)^\circ$ with Hopf structure
(\ref{J})  by \cite[I.7.3 and I.7.4]{BrGo}. Since $U(\frak d)$ is a co-commutative Hopf algebra, $U(\frak d)^\circ$ is a commutative Hopf algebra and thus $\Bbb C[G]$ is a commutative algebra.
By (\ref{J}) and (\ref{K}),  $\Bbb C[G]$ is ${\bf L}$-bigraded.

Let $\{\omega_1,\ldots,\omega_n\}$ be a basis of ${\bf L}$ such that ${\bf L}^+=\sum_i\Bbb Z_{\geq0}\omega_i$. For $\Lambda\in{\bf L}^+$, if $\Lambda=\sum_im_i\omega_i$ then $V(\Lambda)$ is isomorphic to a submodule of $\bigotimes_{i=1}^n\underbrace{V(\omega_i)\otimes\ldots\otimes V(\omega_i)}_{m_i}$ and thus $\Bbb C[G]$ is generated by all elements $c_{f,v}^{V(\omega_i)}$, $1\leq i\leq n$. Hence $\Bbb C[G]$
is finitely generated since each $V(\omega_i)$ is finite dimensional.
\end{proof}


\section{Poisson bracket on $\Bbb C[G]$}

\subsection{Hopf dual of co-Poisson Hopf algebra}
Refer to \cite[Definition 6.2.2]{ChPr} and \cite[Definition 2.2.9]{KoSo} for the definition of co-Poisson Hopf algebra. In \cite[Proposition 3.1.5]{KoSo}, there is a statement  that the Hopf dual of a co-Poisson Hopf algebra  is a Poisson Hopf algebra. Here we give a complete proof for this statement.

\begin{lem}\label{FA}
Let $I_1,\ldots, I_r$ be co-finite dimensional subspaces of a vector space $V$. Then $\cap_{i=1}^r I_i$ is also  co-finite.
\end{lem}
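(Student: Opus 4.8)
The plan is to reduce to the two-subspace case by a straightforward induction on $r$ and, for that case, to produce an explicit injection of $V/(I_1\cap I_2)$ into a space that is manifestly finite dimensional. Recall that a subspace $I\subseteq V$ is co-finite precisely when the quotient $V/I$ is finite dimensional, so the goal is to show $\dim\bigl(V/\cap_{i=1}^r I_i\bigr)<\infty$.

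The crux is the case $r=2$. Given two co-finite subspaces $I$ and $J$, I would consider the linear map
$$\theta\colon V\longrightarrow (V/I)\oplus(V/J),\qquad v\longmapsto (v+I,\,v+J).$$
Its kernel is exactly $\{v\in V: v\in I\text{ and }v\in J\}=I\cap J$, so $\theta$ factors through an injection $V/(I\cap J)\hookrightarrow (V/I)\oplus(V/J)$. Since $V/I$ and $V/J$ are finite dimensional by hypothesis, their direct sum is finite dimensional, whence its subspace $V/(I\cap J)$ is finite dimensional as well; thus $I\cap J$ is co-finite.

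For general $r$ I would argue by induction. The base case $r=1$ is immediate. Assuming $\bigcap_{i=1}^{r-1}I_i$ is co-finite, I would write $\bigcap_{i=1}^{r}I_i=\bigl(\bigcap_{i=1}^{r-1}I_i\bigr)\cap I_r$, which is the intersection of two co-finite subspaces and hence co-finite by the $r=2$ case, completing the induction.

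The argument is entirely routine and I do not anticipate any genuine obstacle; the only step deserving a moment's care is the identification $\ker\theta=I\cap J$, which is immediate from the definition of $\theta$. One could alternatively bound $\dim\bigl(V/(I\cap J)\bigr)\leq\dim(V/I)+\dim(V/J)$ directly, but the kernel-image description above is cleaner and generalizes transparently.
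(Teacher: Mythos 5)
Your proof is correct and uses essentially the same idea as the paper: the diagonal map into a product of quotients, whose kernel is the intersection, embeds $V/\bigcap_i I_i$ into a finite dimensional space. The only difference is cosmetic — the paper maps $V$ into $V/I_1\times\cdots\times V/I_r$ all at once, while you handle $r=2$ and then induct, which proves the same thing with the same mechanism.
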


\begin{proof}
The kernel of the linear map
$$\psi:V\longrightarrow V/I_1\times\ldots\times V/I_r, \ \ \psi(x)=(x+I_1,\ldots, x+I_r)$$
is $\cap_{i=1}^r I_i$. Hence $\cap_{i=1}^r I_i$ is co-finite since $V/I_1\times\ldots\times V/I_r$ is finite dimensional.
\end{proof}

\begin{thm} \cite[Proposition 3.1.5]{KoSo}\label{L}
Let $A=(A,\iota,m,\epsilon,\Delta, S)$ be a co-Poisson co-commutative Hopf algebra with co-bracket $\delta$. Then its Hopf dual
$A^\circ$ is a Poisson Hopf algebra with Poisson bracket
\begin{equation}\label{M}
\{f,g\}=(f\otimes g)\delta, \ \ \ \ (f,g\in A^\circ).
\end{equation}
\end{thm}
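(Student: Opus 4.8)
The plan is to define the bracket by (\ref{M}) and to verify, one axiom at a time, that it makes $A^\circ$ a Poisson Hopf algebra, each verification being the dual of the corresponding co-Poisson axiom on $A$. Recall that the Hopf structure on $A^\circ$ is dual to that of $A$: the product on $A^\circ$ is $\Delta^*$ (so $(fg)(a)=(f\otimes g)\Delta(a)$) and the coproduct $\Delta_{A^\circ}$ is the restriction of $m^*$ (so that writing $\Delta_{A^\circ}(f)=\sum_i f_{1i}\otimes f_{2i}$ we have $f(ab)=\sum_i f_{1i}(a)f_{2i}(b)$). Since $A$ is co-commutative, $A^\circ$ is a commutative Hopf algebra, which supplies the commutative algebra underlying the desired Poisson structure. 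I will use the four co-Poisson properties of $\delta$: it is skew (the flip composed with $\delta$ equals $-\delta$); it satisfies the co-Jacobi identity (the cyclic sum of $(\delta\otimes\mathrm{id})\delta$ vanishes); it is a coderivation for $\Delta$; and it is a derivation for $m$, that is $\delta(ab)=\delta(a)\Delta(b)+\Delta(a)\delta(b)$ in $A\otimes A$.

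First I would settle the one genuinely non-formal point, namely that $\{f,g\}=(f\otimes g)\delta$ actually lies in $A^\circ$ and not merely in $A^*$; this is the main obstacle, and it is exactly what Lemma \ref{FA} is meant to handle. Feeding a product $ab$ into the bracket and using that $\delta$ is a derivation for $m$ gives
\begin{equation*}
\{f,g\}(ab)=(f\otimes g)\big(\delta(a)\Delta(b)+\Delta(a)\delta(b)\big)=\sum_{i,j}\{f_{1i},g_{1j}\}(a)\,(f_{2i}g_{2j})(b)+\sum_{i,j}(f_{1i}g_{1j})(a)\,\{f_{2i},g_{2j}\}(b),
\end{equation*}
where $\Delta_{A^\circ}(f)=\sum_i f_{1i}\otimes f_{2i}$ and $\Delta_{A^\circ}(g)=\sum_j g_{1j}\otimes g_{2j}$ are finite sums. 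This exhibits $m^*(\{f,g\})$ as a finite element of $A^*\otimes A^*$. By the standard description of the Hopf dual — a functional $h\in A^*$ lies in $A^\circ$ precisely when $m^*(h)\in A^*\otimes A^*$, whose proof assembles the finitely many left and right translates of $h$ into a single cofinite two-sided ideal by intersecting cofinite kernels through Lemma \ref{FA} — this proves $\{f,g\}\in A^\circ$, so the bracket is well defined.

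The remaining Poisson-algebra axioms are formal dualizations. Skew-symmetry of $\{\cdot,\cdot\}$ is immediate from skewness of $\delta$. The Leibniz rule $\{f,gh\}=\{f,g\}h+g\{f,h\}$ follows by evaluating both sides on $a\in A$ and comparing: using that the product on $A^\circ$ is $\Delta^*$, both sides reduce, after pairing with $f\otimes g\otimes h$, to the coderivation compatibility of $\delta$ with $\Delta$. The Jacobi identity is the dual of co-Jacobi: pairing $f\otimes g\otimes h$ against the cyclic sum of $(\delta\otimes\mathrm{id})\delta$ produces exactly $\{f,\{g,h\}\}+\{g,\{h,f\}\}+\{h,\{f,g\}\}$, which therefore vanishes. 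Together these show $(A^\circ,\{\cdot,\cdot\})$ is a commutative Poisson algebra.

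Finally, for the Poisson Hopf axiom (v) I would observe that the displayed identity for $\{f,g\}(ab)$ already computes $\Delta_{A^\circ}\{f,g\}=m^*\{f,g\}$ and states precisely that it equals $\{\Delta_{A^\circ}f,\Delta_{A^\circ}g\}$ for the product Poisson bracket on $A^\circ\otimes A^\circ$ prescribed in item (v); hence $\Delta_{A^\circ}$ is a Poisson algebra homomorphism. Thus the single computation underlying well-definedness simultaneously yields axiom (v), and $A^\circ$ is a Poisson Hopf algebra. The only subtle step is the first one; everything else is bookkeeping with Sweedler-type notation and the co-Poisson axioms.
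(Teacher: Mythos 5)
Your proof is correct, and apart from one step it runs parallel to the paper's: both get commutativity of $A^\circ$ from co-commutativity of $A$, both treat the Poisson-algebra axioms as formal duals of skewness, co-Jacobi and co-Leibniz, and both verify axiom (v) by the same computation with $\delta(ab)=\delta(a)\Delta(b)+\Delta(a)\delta(b)$. Where you genuinely diverge is the one substantive point, well-definedness ($\{f,g\}\in A^\circ$ rather than merely $A^*$). The paper constructs an annihilating co-finite ideal explicitly: it takes co-finite ideals $I,J$ with $f(I)=0$, $g(J)=0$, sets $K=I\cap J$ (co-finite by Lemma~\ref{FA}), lets $L$ be the ideal generated by $\delta^{-1}(K\otimes A+A\otimes K)\cap\Delta^{-1}(K\otimes A+A\otimes K)$, and uses the derivation identity to get $\delta(L)\subseteq K\otimes A+A\otimes K$, hence $\{f,g\}(L)=0$. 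You instead compute $m^*(\{f,g\})$ via the same identity, observe that it is a finite element of $A^*\otimes A^*$, and invoke the standard characterization of the finite dual ($h\in A^\circ$ if and only if $m^*(h)\in A^*\otimes A^*$). Both routes are legitimate; the trade-off is that the paper's argument is self-contained modulo Lemma~\ref{FA}, while yours delegates the co-finite-ideal construction to that standard equivalence (whose proof is itself a kernel-intersection argument of the same flavor, so you should cite it precisely, e.g.\ Montgomery or Sweedler, since it is not the definition of $A^\circ$ the paper works with). In exchange, your approach is more economical: the single displayed identity does double duty, being verbatim the statement $m^*\{f,g\}=\{m^*f,m^*g\}$ required for axiom (v), which the paper establishes by a separate (but identical) computation.
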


\begin{proof} Note that $A^\circ$ is commutative since $A$ is co-commutative.
Let $I$ and $J$ be co-finite dimensional ideals of $A$ such that $f(I)=0$ and $g(J)=0$.
Then $K=I\cap J$ is co-finite by Lemma~\ref{FA} and thus $K\otimes A+A\otimes K$ is a co-finite ideal of $A\otimes A$ since
$A\otimes A/(K\otimes A+A\otimes K)$ is isomorphic to $(A/K)\otimes(A/K)$. Let $L$ be the ideal of $A$ generated by
$\delta^{-1}(K\otimes A+A\otimes K)\cap\Delta^{-1}(K\otimes A+A\otimes K)$. Then
$L$ is a co-finite ideal of $A$ by Lemma~\ref{FA} and $\delta(L)\subseteq K\otimes A+A\otimes K$
since $\delta(xy)=\delta(x)\Delta(y)+\Delta(x)\delta(y)$ for all $x,y\in A$. Hence
$(f\otimes g)\delta(L)=0$. It follows that $\{f,g\}\in A^\circ$.

By the co-Jacobi identity and the co-Leibniz identity, $A^\circ$ is a Poisson algebra.
For $x,y\in A$,
$$\begin{aligned}
m^*(\{f,g\})(x\otimes y)&=(f\otimes g)\delta(xy)\\
&=(f\otimes g)(\delta(x)\Delta(y)+\Delta(x)\delta(y))\\
&=\{m^*(f),m^*(g)\}(x\otimes y).
\end{aligned}$$
Hence $A^\circ$ is a Poisson Hopf algebra.
\end{proof}

Refer to \cite[Definition 1.3.1]{ChPr} for the definition of Lie bialgebra. Let $\frak m$ be a Lie algebra and let $U(\frak m)$
be the universal enveloping algebra of $\frak m$. It is well-known that $U(\frak m)$ has a Hopf structure $(U(\frak m),\iota,m, \epsilon,\Delta,S)$, where
$$\epsilon(x)=0,\ \ \Delta(x)=x\otimes1+1\otimes x, \ \ S(x)=-x$$ for all $x\in\frak m$.

\begin{cor}\label{O}
Let $\frak m$ be a Lie bialgebra with co-commutator $\delta'$ and let $U(\frak m)$ be the universal enveloping algebra of $\frak m$.

(1) In $U(\frak m)=(U(\frak m),\iota,m,\epsilon,\Delta, S)$, there exists a unique co-Poisson co-commutative Hopf algebra structure with co-bracket $\delta$ such that $\delta|_{\frak m}=\delta'$.

(2) The Hopf dual $U(\frak m)^\circ$ is a Poisson Hopf algebra with Poisson bracket (\ref{M}).
\end{cor}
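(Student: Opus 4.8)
The plan is to prove the two parts in turn: part~(1) is the genuine content, and part~(2) will then drop out of Theorem~\ref{L}. For part~(1), I would extend the co-commutator $\delta'\colon\frak m\to\frak m\otimes\frak m$ to a map $\delta\colon U(\frak m)\to U(\frak m)\otimes U(\frak m)$ by declaring $\delta(1)=0$ and imposing the co-Leibniz rule
$$\delta(xy)=\delta(x)\Delta(y)+\Delta(x)\delta(y)\qquad(x,y\in U(\frak m)).$$
The cleanest way to make this precise is to view $N:=U(\frak m)\otimes U(\frak m)$ as a $U(\frak m)$-bimodule through $\Delta$, with $a\cdot\xi=\Delta(a)\xi$ and $\xi\cdot a=\xi\Delta(a)$; then the co-Leibniz rule says exactly that $\delta$ is a bimodule derivation $U(\frak m)\to N$. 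By the universal property of the enveloping algebra, such a derivation exists and is uniquely determined by its restriction to $\frak m$, provided that restriction is consistent with the defining relation $xy-yx=[x,y]_{\frak m}$.

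The key point, which I expect to be the heart of the argument, is that this consistency is precisely the $1$-cocycle axiom of the Lie bialgebra $\frak m$. Writing out $\delta(xy-yx)=\delta([x,y]_{\frak m})$ for $x,y\in\frak m$ via the derivation rule gives
$$\delta'([x,y]_{\frak m})=\big(\Delta(x)\delta'(y)-\delta'(y)\Delta(x)\big)-\big(\Delta(y)\delta'(x)-\delta'(x)\Delta(y)\big),$$
and since for $x\in\frak m$ and $a\otimes b\in\frak m\otimes\frak m$ one has
$$\Delta(x)(a\otimes b)-(a\otimes b)\Delta(x)=[x,a]_{\frak m}\otimes b+a\otimes[x,b]_{\frak m},$$
the right-hand side is $x\cdot\delta'(y)-y\cdot\delta'(x)$ for the adjoint (tensor) action of $\frak m$ on $\frak m\otimes\frak m$. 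Thus the relation is respected if and only if $\delta'$ satisfies the $1$-cocycle condition, which it does by hypothesis. This yields existence and uniqueness of $\delta$ with $\delta|_{\frak m}=\delta'$.

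It then remains to check the co-Poisson axioms for $\delta$: co-antisymmetry, compatibility with $\Delta$ (that $\delta$ is a coderivation of the coalgebra $U(\frak m)$), and the co-Jacobi identity. Each holds on the generating space $\frak m$ by the Lie bialgebra axioms for $\delta'$, and I would propagate each to all of $U(\frak m)$ by induction on word length, using the co-Leibniz rule to pass from a product's factors to the product itself. The main obstacle is the co-Jacobi identity: one must verify that the cyclic-sum identity $(1+\sigma+\sigma^2)(\delta\otimes\mathrm{id})\delta=0$, valid on $\frak m$, where $\sigma$ cyclically permutes the three tensor factors, remains stable under the derivation extension; this is a careful but mechanical bookkeeping of how the derivation property interacts with the triple tensor product. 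This is exactly the classical extension theorem of Chari--Pressley, \cite[Proposition 6.2.3]{ChPr}, which I would either reproduce or invoke.

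Finally, part~(2) is immediate. By part~(1), $U(\frak m)$ is a co-Poisson co-commutative Hopf algebra with co-bracket $\delta$, so Theorem~\ref{L} applies directly and shows that the Hopf dual $U(\frak m)^\circ$ is a Poisson Hopf algebra whose Poisson bracket is $\{f,g\}=(f\otimes g)\delta$, which is formula~(\ref{M}).
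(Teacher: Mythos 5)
Your proposal is correct and takes essentially the same route as the paper: part (1) is precisely the extension theorem \cite[Proposition 6.2.3]{ChPr}, which the paper simply cites (recording the same co-Leibniz characterization $\delta(xy)=\Delta(x)\delta(y)+\delta(x)\Delta(y)$ that you use), and part (2) is, exactly as in the paper, an immediate application of Theorem~\ref{L}. Your sketch of the bimodule-derivation and $1$-cocycle consistency argument behind the Chari--Pressley result is accurate, but it is supplementary detail on the cited proposition rather than a different method.
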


\begin{proof}
(1) \cite[Proposition 6.2.3]{ChPr}. (The co-bracket $\delta$ is  a unique extension
$\delta:U(\frak m)\longrightarrow U(\frak m)\otimes U(\frak m)$ of $\delta'$ such that
$\delta(xy)=\Delta(x)\delta(y)+\delta(x)\Delta(y)$ for all $x,y\in U(\frak m)$.)

(2) It follows by (1) and Theorem~\ref{L}.
\end{proof}

\subsection{Poisson bracket of $U(\frak d)^\circ$}

Let
$$r=\sum_{\alpha\in\bf R^+}(x_{\alpha}\otimes x_{-\alpha}-x_{-\alpha}\otimes x_{\alpha})\in\frak n\otimes\frak n\ \ \ ((x_\alpha|x_{-\alpha})=1\text{ for each $\alpha\in{\bf R}^+$})$$
be the one given in (\ref{R}). Then $\frak d$ is a Lie bialgebra with Lie bracket (\ref{Lie bracket}) and co-commutator
\begin{equation}\label{P}
\delta'(x)=x.r
\end{equation}
for all $x\in \frak d$ by Proposition~\ref{MCYBE} and \cite[Proposition 2.1.2]{ChPr}, where the action $x.r$ is by the adjoint action of $x$ on each factor of $r$.

\begin{lem}
The co-commutator $\delta'$ given in (\ref{P}) is as follows:
\begin{equation}\label{PP}
\begin{array}{ll}
\delta'(h_\lambda)=0, &h_\lambda\in\frak h,\\
\delta'(k_\lambda)=0, &k_\lambda\in\frak k,\\
\delta'(x_\beta)= 2^{-1}x_\beta\wedge (h_\beta+k_\beta), &\beta\in\bf R^+,\\
\delta'(x_{-\beta})=2^{-1}x_{-\beta}\wedge (h_{\beta}+k_{\beta}), &\beta\in\bf R^+,
\end{array}\end{equation}
where $a\wedge b=a\otimes b-b\otimes a$.
\end{lem}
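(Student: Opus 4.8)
The plan is to evaluate the cobracket $\delta'(x)=x.r$ of (\ref{P}) term by term, recalling that $x.r$ denotes the adjoint action of $x$ acting as a derivation on each tensor factor of the summands $x_\alpha\otimes x_{-\alpha}-x_{-\alpha}\otimes x_\alpha$ of $r$ in (\ref{R}). The cases $x=h_\lambda$ and $x=k_\lambda$ are immediate: by (\ref{Lie bracket}) we have $[h_\lambda,x_{\pm\alpha}]=\mp(\Phi_-\lambda|\alpha)x_{\pm\alpha}$ and $[k_\lambda,x_{\pm\alpha}]=\pm(\Phi_+\lambda|\alpha)x_{\pm\alpha}$, so each summand of $r$ has total weight zero; for instance $h_\lambda.(x_\alpha\otimes x_{-\alpha})=(-(\Phi_-\lambda|\alpha)+(\Phi_-\lambda|\alpha))\,x_\alpha\otimes x_{-\alpha}=0$, and likewise for the remaining summands, giving $\delta'(h_\lambda)=\delta'(k_\lambda)=0$.

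For $\delta'(x_{\pm\beta})$ with $\beta\in{\bf R}^+$ I would split the sum $x_{\pm\beta}.r=\sum_{\alpha\in{\bf R}^+}x_{\pm\beta}.(x_\alpha\otimes x_{-\alpha}-x_{-\alpha}\otimes x_\alpha)$ into the single term indexed by $\alpha=\beta$ and the terms with $\alpha\neq\beta$. In the $\alpha=\beta$ summand the self-brackets $[x_\beta,x_\beta]$ and $[x_{-\beta},x_{-\beta}]$ vanish, so only $[x_\beta,x_{-\beta}]$ survives (up to sign), and by (\ref{X}) together with the third line of (\ref{Lie bracket}) it equals $2^{-1}(x_\beta|x_{-\beta})(h_\beta+k_\beta)=2^{-1}(h_\beta+k_\beta)$ since $(x_\beta|x_{-\beta})=1$; a direct expansion then shows that this term contributes precisely $2^{-1}x_{\pm\beta}\wedge(h_\beta+k_\beta)$, which is the asserted value. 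For $\alpha\neq\beta$ every bracket $[x_{\pm\beta},x_{\pm\alpha}]$ has nonzero root sum, so by (\ref{A}) it stays in $\frak n$ and coincides with the corresponding bracket of $\frak g$; hence the entire $\alpha\neq\beta$ contribution equals the $\frak n\otimes\frak n$-valued part of the $\frak g$-cobracket $x_{\pm\beta}.r$ formed with the $\frak g$-adjoint action.

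The one real obstacle is to show that this $\alpha\neq\beta$ contribution vanishes. I would settle it by invoking the classical Lie bialgebra structure of $\frak g$ itself: in the present normalization $(x_\beta|x_{-\beta})=1$, with $h_\beta=[x_\beta,x_{-\beta}]_{\frak g}$, the standard $r$-matrix gives $\delta_{\frak g}(x_{\pm\beta})=x_{\pm\beta}\wedge h_\beta$, an element of $\frak n\otimes\frak h+\frak h\otimes\frak n$ (see \cite[Example 2.3.7]{ChPr} and \cite[1.2]{HoLeT}). Since the $\alpha=\beta$ term of this $\frak g$-cobracket is already equal to $x_{\pm\beta}\wedge h_\beta$, while each $\alpha\neq\beta$ term lies in the complementary space $\frak n\otimes\frak n$, the sum of the $\alpha\neq\beta$ terms must be zero in $\frak g\otimes\frak g$; by the identification of the previous paragraph it is then zero in $\frak d\otimes\frak d$ as well. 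Adding the two parts yields $\delta'(x_{\pm\beta})=2^{-1}x_{\pm\beta}\wedge(h_\beta+k_\beta)$, completing the verification of (\ref{PP}).
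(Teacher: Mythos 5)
Your first two paragraphs are sound: the weight-zero argument for $\delta'(h_\lambda)=\delta'(k_\lambda)=0$ is exactly the paper's, and your reduction of the $x_{\pm\beta}$ case is valid --- by (\ref{A}) every bracket occurring in an $\alpha\neq\beta$ summand agrees with the corresponding $\frak g$-bracket and lies in $\frak n$, while the $\alpha=\beta$ summand contributes $2^{-1}x_{\pm\beta}\wedge(h_\beta+k_\beta)$ by (\ref{X}) and (\ref{Lie bracket}). The gap is the ``classical fact'' you then invoke: $\delta_{\frak g}(x_{\pm\beta})=x_{\pm\beta}\wedge h_\beta$ holds when $\beta$ is a \emph{simple} root (that is what the cited sources assert, for the Chevalley generators), but it is false for general $\beta\in{\bf R}^+$. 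Take $\frak g=\frak{sl}_3$ with the trace form, $e_i=E_{i,i+1}$, $f_i=E_{i+1,i}$, and $\beta=\theta=\alpha_1+\alpha_2$, $e_\theta=E_{13}$, $f_\theta=E_{31}$, so that $(x_\alpha|x_{-\alpha})=1$ throughout. Since $\theta+\alpha$ is never a root, only second-slot brackets survive and
\[
e_\theta.r=e_\theta\wedge[e_\theta,f_\theta]+e_1\wedge[e_\theta,f_1]+e_2\wedge[e_\theta,f_2]
=e_\theta\wedge h_\theta-2\,e_1\wedge e_2,
\]
because $[E_{13},E_{21}]=-E_{23}$ and $[E_{13},E_{32}]=E_{12}$. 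The $\frak n\otimes\frak n$ part does not vanish (the cocycle identity $\delta([e_1,e_2])=e_1.\delta(e_2)-e_2.\delta(e_1)$ gives the same answer), so your direct-sum-decomposition argument, while valid as a deduction, starts from a false premise, and the conclusion fails for every non-simple positive root.

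You should also know that the paper's own proof stumbles at exactly the same place, by a different route. It proves, via invariance of the form (\ref{EQUL}), the cancellation $[x_\beta,x_\alpha]\wedge x_{-\alpha}=-x_{\alpha+\beta}\wedge[x_\beta,x_{-(\alpha+\beta)}]$, pairing the first-slot term at index $\alpha$ with the second-slot term at index $\alpha+\beta$; but this only disposes of the second-slot terms $x_{\alpha'}\wedge[x_\beta,x_{-\alpha'}]$ with $\alpha'-\beta\in{\bf R}^+$. When $\beta$ is not simple, i.e.\ when $\beta$ is a sum of two positive roots, there are additional nonzero second-slot terms with $\beta-\alpha'\in{\bf R}^+$ (in the example above, $\alpha'=\alpha_1,\alpha_2$), and these are precisely the survivors computed there. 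So both your argument and the paper's tacitly assume that every nonzero cross term arises from a pair $(\alpha,\alpha+\beta)$, which is true only for simple $\beta$: formula (\ref{PP}) is correct as stated for simple roots, but for a general $\beta\in{\bf R}^+$ the cobracket acquires extra terms in $\frak n\otimes\frak n$, so the step you outsource to the literature is a genuine gap that cannot be repaired by a better citation.
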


\begin{proof}
 For each $\alpha\in\bf R^+$,
$h_\lambda. (x_\alpha\wedge x_{-\alpha})=[h_\lambda,x_\alpha]\wedge x_{-\alpha}+x_\alpha\wedge[h_\lambda,x_{-\alpha}]=0$.
Similarly, $k_\lambda.(x_\alpha\wedge x_{-\alpha})=0$.
Hence $\delta'(h_\lambda)=0$ and $\delta'(k_\lambda)=0$.

 Let $\alpha,\beta,\alpha+\beta\in\bf R$. Thus $-(\alpha+\beta)\in\bf R$.
 Since  the root spaces of $\frak n$ are one dimensional
by \cite[Proposition 10.9]{ErWi},
$$[x_\beta,x_\alpha]=a_{\beta,\alpha}x_{\alpha+\beta}, \ \ \ [x_\beta, x_{-(\alpha+\beta)}]=b_{\beta,\alpha}x_{-\alpha}$$
for some $a_{\beta,\alpha}, b_{\beta,\alpha}\in\Bbb C$.
Then
 \begin{equation}\label{EQUL}
 a_{\beta,\alpha}=-b_{\beta,\alpha}
 \end{equation}
since
$$\begin{aligned}
a_{\beta,\alpha}&=(a_{\beta,\alpha}x_{\alpha+\beta}|x_{-(\alpha+\beta)})=([x_\beta,x_\alpha]|x_{-(\alpha+\beta)})=([x_\beta,x_\alpha]_\frak g|x_{-(\alpha+\beta)})\\
&=-(x_\alpha|[x_\beta, x_{-(\alpha+\beta)}]_\frak g)=-(x_\alpha|[x_\beta, x_{-(\alpha+\beta)}])=-(x_\alpha|b_{\beta,\alpha}x_{-\alpha})=-b_{\beta,\alpha}.
\end{aligned}$$
Hence $[x_\beta,x_\alpha]\wedge x_{-\alpha}=-x_{\alpha+\beta}\wedge [x_\beta,x_{-(\alpha+\beta)}]$.
It follows that
$$\begin{aligned}
x_\beta.r&=\sum_{\alpha\in\bf R^+} [x_\beta,x_\alpha]\wedge x_{-\alpha}
+x_\alpha\wedge[x_\beta,x_{-\alpha}]\\
&=\left\{{\begin{aligned}&x_\beta\wedge [x_\beta,x_{-\beta}],&\beta\in\bf R^+,\\
&[x_\beta,x_{-\beta}]\wedge x_\beta, &\beta\in\bf R^-,\end{aligned}}\right.
\\
&=\left\{{\begin{aligned}&2^{-1}x_\beta\wedge (h_\beta+k_\beta),&\beta\in\bf R^+,\\
&2^{-1}x_\beta\wedge (h_{-\beta}+k_{-\beta}),&\beta\in\bf R^-\end{aligned}}\right.
\end{aligned}$$
by (\ref{X}) and (\ref{Lie bracket}).
\end{proof}

By Corollary~\ref{O}, there exists a unique extension $\delta$ of $\delta'$ given in (\ref{P}) and  the Hopf dual $U(\frak d)^\circ$ is a Poisson Hopf algebra with Poisson bracket
$$\{a,b\}_r=(a\otimes b)\delta$$
for all $a,b\in U(\frak d)^\circ$. Let us find $\{\cdot,\cdot\}_r$ in the sub-Hopf algebra $\Bbb C[G]$ of $U(\frak d)^\circ$.

\begin{lem}
Let $M,N\in {\mathcal{C}}(\frak d)$ and let
$f\in (M^*)_\beta, g\in (N^*)_\gamma, p\in M_\eta, v\in N_\rho$ for $\beta,\gamma,\eta,\rho\in{\bf L}$.
Then
\begin{equation}\label{PPP}
\begin{array}{ll}
\{c_{f,p},c_{g,v}\}_r(h_\lambda)=0, &h_\lambda\in\frak h,\\
\{c_{f,p},c_{g,v}\}_r(k_\lambda)=0, &k_\lambda\in\frak k,\\
\{c_{f,p},c_{g,v}\}_r(x_\alpha)=(\alpha|\rho)f(x_\alpha p)g(v)-(\alpha|\eta)f(p)g(x_\alpha v), &\alpha\in\bf R.
\end{array}
\end{equation}
\end{lem}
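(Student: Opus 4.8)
The strategy is to compute the Poisson bracket $\{c_{f,p},c_{g,v}\}_r$ directly from its defining formula $\{a,b\}_r=(a\otimes b)\delta$ by evaluating it on the generators $h_\lambda,k_\lambda,x_\alpha$ of $\frak d$, using the explicit form of the co-commutator $\delta'$ in (\ref{PP}). The key point is that $\delta$ agrees with $\delta'$ on $\frak d$ itself (Corollary~\ref{O}(1)), so on a single Lie algebra element $z\in\frak d$ one has
$$\{c_{f,p},c_{g,v}\}_r(z)=(c_{f,p}\otimes c_{g,v})\delta'(z).$$
First I would dispose of the two easy cases $z=h_\lambda$ and $z=k_\lambda$: since $\delta'(h_\lambda)=\delta'(k_\lambda)=0$ by (\ref{PP}), the bracket vanishes on these elements immediately, giving the first two equations.

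For the third equation I would take $z=x_\alpha$ and split into the two subcases $\alpha\in{\bf R}^+$ and $\alpha\in{\bf R}^-$ dictated by (\ref{PP}). In the case $\alpha=\beta'\in{\bf R}^+$, substituting $\delta'(x_{\beta'})=2^{-1}x_{\beta'}\wedge(h_{\beta'}+k_{\beta'})$ and expanding the wedge gives
$$(c_{f,p}\otimes c_{g,v})\delta'(x_\alpha)
=2^{-1}\bigl(c_{f,p}(x_\alpha)c_{g,v}(h_\alpha+k_\alpha)-c_{f,p}(h_\alpha+k_\alpha)c_{g,v}(x_\alpha)\bigr).$$
Now I evaluate each factor via the definition $c_{f,p}(z)=f(zp)$ together with the $\frak d$-action (\ref{E}). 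The crucial computation is $c_{g,v}(h_\alpha+k_\alpha)=g\bigl((h_\alpha+k_\alpha)v\bigr)$: using (\ref{E}) one has $h_\alpha v=(\Phi_+\rho|\alpha)v$ and $k_\alpha v=-(\Phi_-\rho|\alpha)v$ for $v\in N_\rho$, so that $(h_\alpha+k_\alpha)v=\bigl((\Phi_+\rho|\alpha)-(\Phi_-\rho|\alpha)\bigr)v=2(\rho|\alpha)v$ by (\ref{Phi}). Hence $c_{g,v}(h_\alpha+k_\alpha)=2(\alpha|\rho)g(v)$, and similarly $c_{f,p}(h_\alpha+k_\alpha)=2(\alpha|\eta)f(p)$. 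Inserting these, the factor of $2^{-1}$ cancels and one obtains exactly $(\alpha|\rho)f(x_\alpha p)g(v)-(\alpha|\eta)f(p)g(x_\alpha v)$, as claimed.

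Finally I would handle the case $\alpha=-\beta'$ with $\beta'\in{\bf R}^+$, where (\ref{PP}) gives $\delta'(x_\alpha)=2^{-1}x_\alpha\wedge(h_{\beta'}+k_{\beta'})=2^{-1}x_\alpha\wedge(h_{-\alpha}+k_{-\alpha})$. The same expansion applies, and one checks that $(h_{-\alpha}+k_{-\alpha})v=2(-\alpha|\rho)v=2(\alpha|-\rho)v$, so that the index relation $(\alpha|\cdot)$ comes out with the correct sign and the formula agrees uniformly across both root signs; this is why the stated third equation can be written for all $\alpha\in{\bf R}$ without case distinction. The only mild subtlety I anticipate is keeping the scalar $2^{-1}$, the sign conventions in the wedge, and the two definitions $h_\alpha\cdot v$ versus $k_\alpha\cdot v$ consistent so that the doubling factor exactly cancels; once the relation $(h_\alpha+k_\alpha)\cdot v=2(\alpha|\rho)v$ is established via (\ref{Phi}), the rest is a direct substitution with no genuine obstacle.
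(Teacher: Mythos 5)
Your strategy coincides with the paper's own proof: both evaluate $\{c_{f,p},c_{g,v}\}_r=(c_{f,p}\otimes c_{g,v})\delta$ on Lie algebra elements, use that $\delta$ restricts to $\delta'$ on $\frak d$ (Corollary~\ref{O}(1)), dispose of $h_\lambda$ and $k_\lambda$ by the first two lines of (\ref{PP}), and reduce the root-vector case to the identity $(h_\alpha+k_\alpha)v=2(\alpha|\rho)v$ coming from (\ref{E}) and (\ref{Phi}). Your treatment of $h_\lambda$, $k_\lambda$ and of $x_\alpha$ with $\alpha\in{\bf R}^+$ is correct and essentially identical to the paper's.

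The negative-root case, however, contains a genuine error, located exactly at the point you flagged as a ``mild subtlety.'' For $\alpha\in{\bf R}^-$ you correctly write $\delta'(x_\alpha)=2^{-1}x_\alpha\wedge(h_{-\alpha}+k_{-\alpha})$ and $(h_{-\alpha}+k_{-\alpha})v=2(-\alpha|\rho)v=-2(\alpha|\rho)v$; but substituting these into your expansion yields
$$\{c_{f,p},c_{g,v}\}_r(x_\alpha)=(-\alpha|\rho)f(x_\alpha p)g(v)-(-\alpha|\eta)f(p)g(x_\alpha v),$$
which is the \emph{negative} of the right-hand side of (\ref{PPP}). Your claim that ``the formula agrees uniformly across both root signs'' is precisely what fails: the minus sign does not cancel, and nothing in your argument makes it cancel. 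The discrepancy is real, not notational: in type $A_1$, taking $p=v=v_\omega$ a highest weight vector of $V(\omega)$, $f=(x_{-\alpha}v_\omega)^*$ and $g=(v_\omega)^*$, one has $f(x_{-\alpha}p)g(v)\neq0$ and $f(p)=0$, so the two candidate values $\pm(\alpha|\omega)f(x_{-\alpha}p)g(v)$ are distinct since $(\alpha|\omega)\neq0$. What your (correct) intermediate steps actually expose is a sign inconsistency in the paper itself: the paper's proof silently uses $\delta'(x_\alpha)=2^{-1}x_\alpha\wedge(h_\alpha+k_\alpha)$ for \emph{every} $\alpha\in{\bf R}$, which contradicts (\ref{PP}) when $\alpha\in{\bf R}^-$, and the third line of (\ref{PPP}), read literally for negative $\alpha$, is incompatible with (\ref{PP}). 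Consistency with (\ref{PP}), and with Proposition~\ref{BPHA} (whose right-hand side evaluated at $x_{-\nu}$, $\nu\in{\bf R}^+$, equals $(\nu|\rho)f(x_{-\nu}p)g(v)-(\nu|\eta)f(p)g(x_{-\nu}v)$), forces the coefficients for $\alpha\in{\bf R}^-$ to be $(-\alpha|\rho)$ and $(-\alpha|\eta)$. So the honest conclusion of your computation is that (\ref{PPP}) holds as stated for $\alpha\in{\bf R}^+$, while for $\alpha\in{\bf R}^-$ the coefficients must be taken with respect to the positive root $-\alpha$; asserting uniform agreement papers over the gap rather than closing it.
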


\begin{proof}
By (\ref{PP}), $\{c_{f,p},c_{g,v}\}_r(h_\lambda)=0$, $\{c_{f,p},c_{g,v}\}_r(k_\lambda)=0$ and
$$\begin{aligned}
\{c_{f,p},c_{g,v}\}_r(x_\alpha)&=(c_{f,p}\otimes c_{g,v})\delta'(x_\alpha)\\
&=2^{-1}\left(f(x_\alpha p)g((h_\alpha+k_\alpha)v)-f((h_\alpha+k_\alpha)p)g(x_\alpha v)\right)\\
&=(\alpha|\rho)f(x_\alpha p)g(v)-(\alpha|\eta)f(p)g(x_\alpha v)
\end{aligned}$$
since $(h_\alpha+k_\alpha)v=2(\alpha|\rho)v$ and $(h_\alpha+k_\alpha)p=2(\alpha|\eta)p$ by (\ref{E}) and (\ref{Phi}).
\end{proof}

\begin{lem}\label{TRA}
For $M,N\in {\mathcal{C}}(\frak d)$, let
$\{f_i\}_{i=1}^r, \{u_i\}_{i=1}^r, \{g_j\}_{j=1}^s, \{v_j\}_{j=1}^s$ be $\Bbb C$-bases of $M^*, M, N^*, N$ respectively
such that $f_i(u_k)=\delta_{ik}$ and $g_j(v_\ell)=\delta_{j\ell}$. For any $\alpha\in\bf R^+$, suppose that
$$x_\alpha u_i=\sum_{k=1}^r a_{ik}u_k, \ \ x_{-\alpha}v_j=\sum_{\ell=1}^s b_{j\ell}v_\ell,$$
where $a_{ik},b_{j\ell}\in\Bbb C$.
Then
$$f_ix_\alpha =\sum_{k=1}^r a_{ki}f_k, \ \ g_jx_{-\alpha}=\sum_{\ell=1}^s b_{\ell j}g_\ell.$$
\end{lem}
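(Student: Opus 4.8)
The plan is to reduce everything to the right $\frak g$-module structure on the dual space furnished by Lemma~\ref{WEIG}(2), namely $(fx)(y)=f(xy)$ for $f\in M^*$, $x\in\frak g$, $y\in M$. Since $M$ and $N$ are finite dimensional, $f_ix_\alpha\in M^*$ and $g_jx_{-\alpha}\in N^*$, so each is a finite $\Bbb C$-linear combination of the respective dual basis, and its coefficients are recovered simply by evaluating on the basis vectors $u_m$, respectively $v_m$. Thus the whole computation amounts to reading off the matrix of the right action against the dual bases, and the content of the lemma is precisely that this matrix is the transpose of the one representing the left action.

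First I would write $f_ix_\alpha=\sum_{k}c_k f_k$ and exploit $f_k(u_m)=\delta_{km}$ to identify $c_m=(f_ix_\alpha)(u_m)$. Evaluating directly, using the right action and the hypothesis $x_\alpha u_m=\sum_k a_{mk}u_k$ together with $f_i(u_k)=\delta_{ik}$, gives
$$(f_ix_\alpha)(u_m)=f_i(x_\alpha u_m)=\sum_k a_{mk}f_i(u_k)=a_{mi}.$$
Hence $c_m=a_{mi}$ and $f_ix_\alpha=\sum_m a_{mi}f_m=\sum_{k}a_{ki}f_k$, which is the first asserted identity, the index transposition being exactly the passage from a matrix to its transpose. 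The second identity is obtained by the identical argument applied to $N$ and $x_{-\alpha}$: evaluating $g_jx_{-\alpha}$ on $v_m$ yields $g_j(x_{-\alpha}v_m)=\sum_\ell b_{m\ell}g_j(v_\ell)=b_{mj}$, whence $g_jx_{-\alpha}=\sum_{\ell}b_{\ell j}g_\ell$.

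There is no serious obstacle in this argument; the only point demanding care is the index bookkeeping, since the coefficient matrix $(a_{ik})$ of $x_\alpha$ acting on the left of $M$ reappears transposed, as $(a_{ki})$, in the right action on $M^*$, and likewise for $b$. I would emphasise only the implicit check that $f_ix_\alpha$ genuinely lies in $M^*$, which legitimises the finite expansion and is immediate from finite dimensionality of $M$; everything else is the evaluation above.
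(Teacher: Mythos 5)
Your proof is correct and is essentially identical to the paper's: both expand $f_ix_\alpha$ in the dual basis $\{f_k\}$, evaluate on the basis vectors $u_t$ using the right action $(fx)(y)=f(xy)$ from Lemma~\ref{WEIG}(2), and read off the coefficients as $a_{ti}$, with the second identity following by the same computation. No differences worth noting.
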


\begin{proof}
Let $f_ix_\alpha =\sum_{k=1}^r d_kf_k$ for some $d_k\in\Bbb C$. Then
$$d_t=(\sum_{k=1}^r d_kf_k)(u_t)=(f_ix_\alpha)(u_t)=f_i(x_\alpha u_t)=f_i(\sum_{k=1}^r a_{tk}u_k)=a_{ti}.$$
Hence $f_ix_\alpha =\sum_{k=1}^r a_{ki}f_k$. Similarly, we have $g_jx_{-\alpha}=\sum_{\ell=1}^s b_{\ell j}g_\ell$.
\end{proof}

\begin{prop}\label{BPHA}
The function algebra $\Bbb C[G]$ is an ${\bf L}$-bigraded Poisson Hopf algebra with Poisson bracket
\begin{equation}\label{PBB}
\{c_{f,p}, c_{g,v}\}_r=[(\eta|\rho)-(\beta|\gamma)]c_{f,p} c_{g,v}
+2\sum_{\nu\in\bf R^+} (c_{f,x_\nu p}c_{g,x_{-\nu}v}-c_{fx_\nu,p}c_{gx_{-\nu},v})
\end{equation}
for weight vectors
$f\in (M^*)_\beta, g\in (N^*)_\gamma, p\in M_\eta, v\in N_\rho$ of $M,N\in{\mathcal{C}}(\frak d) $,
where $\beta,\gamma,\eta,\rho\in{\bf L}$.
\end{prop}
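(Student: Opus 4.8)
The plan is to evaluate $\{c_{f,p},c_{g,v}\}_r=(c_{f,p}\otimes c_{g,v})\delta$ directly on $U(\frak d)$ by first writing the co-bracket $\delta$ of Corollary~\ref{O} in closed form. Since the bialgebra structure on $\frak d$ is the coboundary one, $\delta'(x)=x.r$, I claim the extension provided by Corollary~\ref{O}(1) is $\delta(z)=\Delta(z)r-r\Delta(z)$ for $z\in U(\frak d)$, the commutator being taken in $U(\frak d)\otimes U(\frak d)$. Indeed $z\mapsto \Delta(z)r-r\Delta(z)$ restricts to $x.r=\delta'(x)$ on $\frak d$ and obeys $\Delta(xy)r-r\Delta(xy)=\Delta(x)(\Delta(y)r-r\Delta(y))+(\Delta(x)r-r\Delta(x))\Delta(y)$, because $\Delta$ is an algebra map and commutation with $r$ is a derivation; so it coincides with $\delta$ by the uniqueness clause of Corollary~\ref{O}(1).

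Feeding this in, I would write $\Delta(z)=\sum z_{(1)}\otimes z_{(2)}$ and use the product rule $c^M_{f,p}c^N_{g,v}=c^{M\otimes N}_{f\otimes g,p\otimes v}$ of (\ref{J}) together with the right action of Lemma~\ref{WEIG}(2) to obtain, for $a,b\in\frak n$, the two identities $(c_{f,p}\otimes c_{g,v})(\Delta(z)(a\otimes b))=(c_{f,ap}c_{g,bv})(z)$ and $(c_{f,p}\otimes c_{g,v})((a\otimes b)\Delta(z))=(c_{fa,p}c_{gb,v})(z)$. Substituting $r=\sum_{\nu\in{\bf R}^{+}}(x_\nu\otimes x_{-\nu}-x_{-\nu}\otimes x_\nu)$ from (\ref{R}) then yields the closed formula
$$\{c_{f,p},c_{g,v}\}_r=\sum_{\nu\in{\bf R}^{+}}(c_{f,x_\nu p}c_{g,x_{-\nu}v}-c_{f,x_{-\nu}p}c_{g,x_\nu v})-\sum_{\nu\in{\bf R}^{+}}(c_{fx_\nu,p}c_{gx_{-\nu},v}-c_{fx_{-\nu},p}c_{gx_\nu,v}).$$
This already settles most of the Proposition: every term lies in $\Bbb C[G]$, so $\Bbb C[G]$ is closed under $\{\cdot,\cdot\}_r$ and is a Poisson sub-Hopf algebra of $U(\frak d)^\circ$ (the Poisson and Hopf compatibilities being inherited from Corollary~\ref{O}); and reading off weights via Lemma~\ref{WEIG} (so $x_\nu p\in M_{\eta+\nu}$, $fx_\nu\in(M^*)_{\beta+\nu}$, and so on) places every term in $\Bbb C[G]_{\beta+\gamma,\eta+\rho}$, which is condition (vi).

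To reach the stated shape (\ref{PBB}) I would subtract $2\sum_{\nu}(c_{f,x_\nu p}c_{g,x_{-\nu}v}-c_{fx_\nu,p}c_{gx_{-\nu},v})$ from the closed formula; the remaining task collapses to the single functional identity $(c_{f,p}\otimes c_{g,v})(t\Delta(z)-\Delta(z)t)=[(\eta|\rho)-(\beta|\gamma)](c_{f,p}c_{g,v})(z)$, where $t=\sum_{\nu\in{\bf R}^{+}}(x_\nu\otimes x_{-\nu}+x_{-\nu}\otimes x_\nu)$. The device is to complete $t$ to the quadratic Casimir $\Omega=\Omega_0+t$, whose Cartan part $\Omega_0$ is built from the elements $H_\lambda=2^{-1}(h_\lambda+k_\lambda)$. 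By (\ref{E}) and (\ref{Phi}) one has $H_\lambda z_\nu=(\lambda|\nu)z_\nu$, so $h_\lambda\mapsto H_\lambda,\ x_\alpha\mapsto x_\alpha$ is a Lie embedding $\frak g\hookrightarrow\frak d$ acting on each $M\in\mathcal{C}(\frak d)$ exactly as the original $\frak g$; hence $\Omega$ is the Casimir of this copy of $\frak g$ and is invariant under it, while a one-line check using $[h_\lambda-k_\lambda,x_\alpha]=-2(\Phi\lambda|\alpha)x_\alpha$ gives $(h_\lambda-k_\lambda).\Omega=0$ on the complementary anti-diagonal Cartan directions. Thus $\Omega$ is $\frak d$-invariant, so $\Omega\Delta(z)=\Delta(z)\Omega$; replacing $t$ by $-\Omega_0$ and using that $\Omega_0$ acts on $p\otimes v$ by the scalar $(\eta|\rho)$ and, after pairing with $f\otimes g$ (which by Lemma~\ref{WEIG}(1) selects the $M_{-\beta}\otimes N_{-\gamma}$ component), by the scalar $(\beta|\gamma)$, produces exactly $[(\eta|\rho)-(\beta|\gamma)]c_{f,p}c_{g,v}$. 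Collecting terms gives (\ref{PBB}).

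The main obstacle is the third step: one must guess the correct Cartan completion $\Omega_0$ and confirm that the Casimir survives the twist of (\ref{Lie bracket}) and remains $\frak d$-invariant. Once the embedding $\frak g\hookrightarrow\frak d$ through $h_\lambda\mapsto 2^{-1}(h_\lambda+k_\lambda)$ is isolated, invariance is automatic along the diagonal Cartan and along $\frak n$, and the only genuine computation is the vanishing of $(h_\lambda-k_\lambda).\Omega$ in the anti-diagonal directions; everything else is weight bookkeeping with Lemma~\ref{WEIG}.
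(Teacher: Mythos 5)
Your argument is correct, but it follows a genuinely different route from the paper's. The paper proves (\ref{PBB}) by induction on the length of Poincare--Birkhoff--Witt monomials $Y\in U(\frak d)$: the base cases $\ell(Y)\leq 1$ are checked against the generator formula (\ref{PPP}), and the inductive step writes $Y=Xy$ and uses the compatibility $\Delta(\{a,b\}_r)=\{\Delta(a),\Delta(b)\}$ together with the transpose bookkeeping of Lemma~\ref{TRA}. You instead identify the cobracket in closed form, $\delta(z)=\Delta(z)r-r\Delta(z)$, which is legitimate: this map restricts to $\delta'$ on $\frak d$ and satisfies the derivation property of Corollary~\ref{O}(1), so the uniqueness clause applies; dualizing through Lemma~\ref{WEIG}(2) then gives an exact four-term formula for $\{c_{f,p},c_{g,v}\}_r$ with no induction at all. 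The remaining step --- trading the symmetric tensor $t=\sum_\nu(x_\nu\otimes x_{-\nu}+x_{-\nu}\otimes x_\nu)$ for the Cartan scalar --- is the genuinely new idea: the elements $H_\lambda=2^{-1}(h_\lambda+k_\lambda)$ do satisfy $[H_\lambda,x_\alpha]=(\lambda|\alpha)x_\alpha$ and $[x_\alpha,x_{-\alpha}]=(x_\alpha|x_{-\alpha})H_\alpha$ by (\ref{Lie bracket}) and (\ref{X}), so $h_\lambda\mapsto H_\lambda$, $x_\alpha\mapsto x_\alpha$ is a Lie embedding $\frak g\hookrightarrow\frak d$ whose action on $M\in\mathcal{C}(\frak d)$ agrees with the original $\frak g$-action by (\ref{E}) and (\ref{Phi}); the Casimir $2$-tensor $\Omega=\Omega_0+t$ of this copy is then invariant under it, and your anti-diagonal check $[h_\lambda-k_\lambda,x_\alpha]=-2(\Phi\lambda|\alpha)x_\alpha$ correctly kills the remaining Cartan directions (the coefficients of $x_\nu\otimes x_{-\nu}$ cancel because $(\Phi\lambda|\nu)+(\Phi\lambda|-\nu)=0$), so $\Omega$ is $\frak d$-invariant and $[t,\Delta(z)]=-[\Omega_0,\Delta(z)]$. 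Evaluating $\Omega_0$ on $p\otimes v$ gives $(\eta|\rho)$, and pairing against $f\otimes g$ gives $(-\beta|-\gamma)=(\beta|\gamma)$, which is exactly the scalar in (\ref{PBB}). What your approach buys is brevity and an explanation of \emph{why} the term $[(\eta|\rho)-(\beta|\gamma)]$ appears --- it is the Cartan part of a Casimir whose full invariance converts the wrong-signed half of $r$ into diagonal terms; what the paper's induction buys is self-containedness --- it needs only the values of $\delta'$ on generators and never has to exhibit the embedded $\frak g$ or verify invariance of $\Omega$, at the cost of a longer computation.
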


\begin{proof}
Let $\lambda_1,\ldots,\lambda_n$ be a $\Bbb C$-basis of $\frak h^*$. Then
$\frak B=\{h_{\lambda_i},k_{\lambda_i}, x_\alpha|i=1,\ldots,n,\alpha\in{\bf R}\}$ is a $\Bbb C$-basis of $\frak d$.
For convenience, we set $\frak B=\{y_1,\ldots, y_r\}$. Hence each $y_i$ is one of $h_\lambda,k_\lambda$ and $x_\alpha$, $0\neq\lambda\in\frak h^*,\alpha\in{\bf R}$.
 By the Poincare-Birkhoff-Witt theorem, $U(\frak d)$ has the $\Bbb C$-basis consisting of the standard monomials $Y=y_1^{s_1}\cdots y_r^{s_r}$ that is a product of $y_i\in\frak B$.  Let   $\ell(Y)$ be the number $s_1+\ldots+s_r$ of elements of $\frak B$ appearing in $Y$. We will call $\ell(Y)$ the length of $Y$. Denote by RH the right hand side of (\ref{PBB}). We proceed by induction on $\ell(Y)$ to show
$\{c_{f,p}, c_{g,v}\}_r(Y)=\text{RH}(Y)$.

If $\ell(Y)=0$ then $Y=1$, $\{c_{f,p}, c_{g,v}\}_r(1)=0$ and
$\text{RH}(1)=[(\eta|\rho)-(\beta|\gamma)]f(p)g(v)$. If $f(p)g(v)\neq0$ then $-\beta=\eta$ and $-\gamma=\rho$ by Lemma~\ref{WEIG}(1) and thus
$(\eta|\rho)-(\beta|\gamma)=0$. It follows that
\begin{equation}\label{ONE}
[(\eta|\rho)-(\beta|\gamma)]f(p)g(v)=0.
\end{equation}
 Hence $\{c_{f,p}, c_{g,v}\}_r(1)=0=\text{RH}(1)$.

Suppose that $\ell(Y)=1$. Then $Y=y_i$ for some $i$ and thus $Y=h_\lambda$, $Y=k_\lambda$ or $Y=x_\alpha$. If $Y=h_\lambda$ then
$$\begin{aligned}
\text{RH}(Y)&=[(\eta|\rho)-(\beta|\gamma)](f(h_\lambda p)g(v)+f(p)g(h_\lambda v))\\
&\qquad\qquad+2\sum(f(h_\lambda x_\nu p)g(x_{-\nu}v) +f(x_\nu p)g(h_\lambda x_{-\nu}v))\\
&\qquad\qquad-2\sum (f(x_\nu h_\lambda p)g(x_{-\nu}v)+f(x_\nu p)g(x_{-\nu}h_\lambda v))\\
&=[(\eta|\rho)-(\beta|\gamma)](f(h_\lambda p)g(v)+f(p)g(h_\lambda v))\\
&\qquad\qquad+2\sum f([h_\lambda, x_\nu]p)g(x_{-\nu}v)+f(x_\nu p)g([h_\lambda,x_{-\nu}]v)\\
&=[(\Phi_+\eta|\lambda)+(\Phi_+\rho|\lambda)][(\eta|\rho)-(\beta|\gamma)]f(p)g(v)\ \ \ \  (\text{by } (\ref{E}), \ref{Lie bracket}))\\
&=0.\ \ \ \ (\text{by }  (\ref{ONE}))
\end{aligned}
$$
 Hence $\{c_{f,p}, c_{g,v}\}_r(h_\lambda)=0=\text{RH}(h_\lambda)$ by (\ref{PPP}). Similarly
we have $\{c_{f,p}, c_{g,v}\}_r(k_\lambda)=\text{RH}(k_\lambda)$.
Let  $Y=x_\alpha$, $\alpha\in\bf R$. Suppose that $\alpha\in\bf R^+$. Then
$$\begin{aligned}
\text{RH}(x_\alpha)
&=[(\eta|\rho)-(\beta|\gamma)](f(x_\alpha p)g(v)+f(p)g(x_\alpha v))\\
&\qquad\qquad+2\sum f([x_\alpha, x_\nu]p)g(x_{-\nu}v)+f(x_\nu p)g([x_\alpha,x_{-\nu}]v)\\
&\qquad\qquad(\text{replace $[x_\alpha,x_\nu]$ by $a_{\alpha\nu}x_{\alpha+\nu}$, then  $[x_\alpha,x_{-(\alpha+\nu)}]=-a_{\alpha\nu}x_{-\nu}$ by } (\ref{EQUL}))\\
&=[(\eta|\rho)-(\beta|\gamma)](f(x_\alpha p)g(v)+f(p)g(x_\alpha v))+2f(x_\alpha p)g([x_\alpha,x_{-\alpha}]v)\\
&=[(\eta|\rho)-(\beta|\gamma)](f(x_\alpha p)g(v)+f(p)g(x_\alpha v))+f(x_\alpha p)g((h_\alpha+k_\alpha)v)\\
&=[(\eta|\rho)-(\beta|\gamma)+2(\alpha|\rho)]f(x_\alpha p)g(v)+[(\eta|\rho)-(\beta|\gamma)]f(p)g(x_\alpha v).
\end{aligned}
$$
If $f(x_\alpha p)g(v)\neq0$ then $-\beta=\alpha+\eta$ and  $-\gamma=\rho$ by Lemma~\ref{WEIG}(1),
 thus $f(p)g(x_\alpha v)=0$ and
$$\begin{aligned}
\text{RH}(x_\alpha)&=[(\eta|\rho)-(\beta|\gamma)+2(\alpha|\rho)]f(x_\alpha p)g(v)=(\alpha|\rho)f(x_\alpha p)g(v)\\
&=(\alpha|\rho)f(x_\alpha p)g(v) -(\alpha|\eta)f(p)g(x_\alpha v).
\end{aligned}$$
Similarly, if $f(p)g(x_\alpha v)\neq0$ then $-\beta=\eta$ and $-\gamma=\alpha+\rho$ by Lemma~\ref{WEIG}(1),  thus
$f(x_\alpha p)g(v)=0$ and
$$\begin{aligned}
\text{RH}(x_\alpha)&=[(\eta|\rho)-(\beta|\gamma)]f(p)g(x_\alpha v)=-(\alpha|\eta)f(p)g(x_\alpha v)\\
&=(\alpha|\rho)f(x_\alpha p)g(v)-(\alpha|\eta)f(p)g(x_\alpha v).
\end{aligned}$$
It follows that
$$\text{RH}(x_\alpha)=(\alpha|\rho)f(x_\alpha p)g(v)-(\alpha|\eta)f(p)g(x_\alpha v)$$
and thus $\{c_{f,p}, c_{g,v}\}_r(x_\alpha)=\text{RH}(x_\alpha)$ for $\alpha\in\bf R^+$ by (\ref{PPP}).
Similarly, we have $\{c_{f,p}, c_{g,v}\}_r(x_{-\alpha})=\text{RH}(x_{-\alpha})$ for $\alpha\in\bf R^+$.

Suppose that $\ell(Y)=s>1$ and that $\{c_{f,p}, c_{g,v}\}_r(Z)=\text{RH}(Z)$ for all monomials $Z$ with $\ell(Z)<s$.
Note that $Y=Xy$ for some monomial $X$ with $\ell(X)=s-1$ and $y\in\frak B$.
 Note that
\begin{equation}\label{PDEL}
\Delta(c_{f,p})=\sum_ic_{f,u_i}\otimes c_{f_i,p},\ \ \Delta(c_{g,v})=\sum_ic_{g,v_i}\otimes c_{g_i,v},
\end{equation}
where $\{f_i\}_{i=1}^a, \{u_i\}_{i=1}^a, \{g_i\}_{i=1}^b, \{v_i\}_{i=1}^b$ are $\Bbb C$-bases of $M^*,M,N^*,N$ such that
$f_i(u_j)=\delta_{ij}$ and $g_i(v_j)=\delta_{ij}$. We may assume that $f_i,u_i,g_i,v_i$ are all weight vectors and
denote by $\eta_i$ and $\rho_i$ the weights of $u_i$ and $v_i$ respectively. Hence the weights of $f_i$ and $g_i$ are $-\eta_i$
and $-\rho_i$ respectively by Lemma~\ref{WEIG}(1).
Then
$$\begin{aligned}
&\{c_{f,p},c_{g,v}\}_r(Xy)=\sum_{i,j} \{c_{f,u_i}, c_{g,v_j}\}_r(X)( c_{f_i,p}c_{g_j,v})(y)+(c_{f,u_i} c_{g,v_j})(X)\{c_{f_i,p},c_{g_j,v}\}_r(y)\\
&\qquad\qquad(\text{since } \Delta(\{c_{f,p},c_{g,v}\}_r)=\sum_{i,j} \{c_{f,u_i}, c_{g,v_j}\}_r\otimes c_{f_i,p}c_{g_j,v}+c_{f,u_i} c_{g,v_j}\otimes \{c_{f_i,p},c_{g_j,v}\}_r)\\
&=\sum_{i,j}[(\eta_i|\rho_j)-(\beta|\gamma)](c_{f,u_i} c_{g,v_j})(X)( c_{f_i,p}c_{g_j,v})(y)\\
&\qquad+2\sum_{i,j}\sum_{\alpha\in\bf R^+}(c_{f,x_\alpha u_i} c_{g,x_{-\alpha}v_j}-c_{fx_\alpha,u_i} c_{gx_{-\alpha},v_j})(X)( c_{f_i,p}c_{g_j,v})(y)\\
&+\sum_{i,j}[(\eta|\rho)-(\eta_i|\rho_j)](c_{f,u_i} c_{g,v_j})(X)( c_{f_i,p}c_{g_j,v})(y)\\
&\qquad+2\sum_{i,j}\sum_{\alpha\in\bf R^+}(c_{f,u_i} c_{g,v_j})(X)(c_{f_i,x_\alpha p} c_{g_j,x_{-\alpha}v}-c_{f_ix_\alpha,p} c_{g_jx_{-\alpha},v})(y)\\
&\qquad\qquad(\text{by induction hypothesis})\\
&=\sum_{i,j}[(\eta|\rho)-(\beta|\gamma)](c_{f,u_i} c_{g,v_j})(X)( c_{f_i,p}c_{g_j,v})(y)\\
&\qquad+2\sum_{\alpha\in\bf R^+}\sum_{i,j}(c_{f,u_i} c_{g,v_j})(X)(c_{f_i,x_\alpha p} c_{g_j,x_{-\alpha}v})(y)
-(c_{fx_\alpha,u_i} c_{gx_{-\alpha},v_j})(X)( c_{f_i,p}c_{g_j,v})(y)\\
&\qquad+2\sum_{\alpha\in\bf R^+}\sum_{i,j}(c_{f,x_\alpha u_i} c_{g,x_{-\alpha}v_j})(X)(c_{f_i, p} c_{g_j,v})(y)
-(c_{f,u_i} c_{g,v_j})(X)( c_{f_ix_\alpha,p}c_{g_jx_{-\alpha},v})(y)\\
&=\text{RH}(Xy)\\
&\qquad+2\sum_{\alpha\in\bf R^+}\sum_{i,j}(c_{f,x_\alpha u_i} c_{g,x_{-\alpha}v_j})(X)(c_{f_i, p} c_{g_j,v})(y)
-(c_{f,u_i} c_{g,v_j})(X)( c_{f_ix_\alpha,p}c_{g_jx_{-\alpha},v})(y)\\
&\qquad\qquad(\text{by } (\ref{PDEL}))\\
&=\text{RH}(Xy)\\
&\qquad+2\sum_{\alpha\in\bf R^+}\sum_{i,j}\sum_{k,\ell}a_{ik}b_{j\ell}(c_{f, u_k} c_{g,v_\ell})(X)(c_{f_i, p} c_{g_j,v})(y)
-a_{ki}b_{\ell j}(c_{f,u_i} c_{g,v_j})(X)( c_{f_k,p}c_{g_\ell,v})(y)\\
&\qquad\qquad(\text{replace  $x_\alpha u_i$ and $x_{-\alpha}v_j$ by $\sum_{k=1}^r a_{ik}u_k$ and $\sum_{\ell=1}^s b_{j\ell}v_\ell$ and use Lemma } \ref{TRA})\\
&=\text{RH}(Xy).
\end{aligned}$$
Hence we have (\ref{PBB}).

It is easily observed that the Poisson bracket of $\Bbb C[G]$ preserves the ${\bf L}$-bigrading by (\ref{PBB}) and Lemma~\ref{WEIG}.  It follows that $\Bbb C[G]$ is an $\bf L$-bigraded Poisson Hopf algebra by Theorem~\ref{ALGEB} and Theorem~\ref{L}.
\end{proof}

\begin{lem}\label{PADD}
Let $A_1=(A,\iota,m,\epsilon,\Delta, S, \{\cdot,\cdot\}_1)$ and $A_2=(A,\iota,m,\epsilon,\Delta, S, \{\cdot,\cdot\}_2)$ be  $K$-bigraded Poisson Hopf algebras over a field ${\bf k}$. Define a bilinear product $\{\cdot,\cdot\}$ on $A$ by
$$\{a,b\}=\{a,b\}_1+\{a,b\}_2$$
for $a,b\in A$. Then $A=(A,\iota,m,\epsilon,\Delta, S, \{\cdot,\cdot\})$ is also a $K$-bigraded Poisson Hopf algebra.
\end{lem}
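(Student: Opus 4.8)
The plan is to exploit the fact that nearly every defining property of a $K$-bigraded Poisson Hopf algebra is \emph{linear} in the bracket, and therefore transfers to the sum $\{\cdot,\cdot\}=\{\cdot,\cdot\}_1+\{\cdot,\cdot\}_2$ with no computation, leaving a single genuinely quadratic condition to be analyzed. Skew-symmetry, bilinearity, the Leibniz (biderivation) rule $\{a,bc\}=\{a,b\}c+b\{a,c\}$, and the grading condition (vi) each hold for $\{\cdot,\cdot\}_1$ and $\{\cdot,\cdot\}_2$ separately and are manifestly preserved under addition of brackets, so they hold for $\{\cdot,\cdot\}$. For the coproduct compatibility (v) I would first observe that the induced bracket on $A\otimes A$,
$$\{a_1\otimes a_2,b_1\otimes b_2\}=\{a_1,b_1\}\otimes a_2 b_2+a_1 b_1\otimes\{a_2,b_2\},$$
is itself linear in the underlying bracket, hence equals the sum of the two induced brackets on $A\otimes A$; since $\Delta$ is linear and is a Poisson map for each of $\{\cdot,\cdot\}_1$ and $\{\cdot,\cdot\}_2$, the identity $\Delta\{a,b\}=\{\Delta a,\Delta b\}$ for the sum follows by adding the two instances.

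The only property that is not linear in the bracket is the Jacobi identity, and this is where the real content lies. Expanding the Jacobiator of $\{\cdot,\cdot\}$ and sorting the twelve resulting terms according to which bracket sits inside and which outside, the ``pure'' groups reassemble into the Jacobiators of $\{\cdot,\cdot\}_1$ and of $\{\cdot,\cdot\}_2$, both of which vanish by hypothesis. Thus the whole identity collapses to the vanishing of the \emph{mixed} Jacobiator
$$M(a,b,c)=\{a,\{b,c\}_1\}_2+\{b,\{c,a\}_1\}_2+\{c,\{a,b\}_1\}_2+\{a,\{b,c\}_2\}_1+\{b,\{c,a\}_2\}_1+\{c,\{a,b\}_2\}_1,$$
that is, to the assertion that the two Poisson structures are \emph{compatible}.

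Proving $M\equiv0$ is the main obstacle, and it is precisely here that the $K$-bigrading is indispensable rather than cosmetic. Since each $\{\cdot,\cdot\}_i$ is a biderivation, $M$ is a triderivation in $(a,b,c)$, so by the Leibniz rule it is determined by its values on a set of algebra generators; it therefore suffices to verify $M=0$ on the homogeneous generators produced in Theorem~\ref{ALGEB}. On such homogeneous elements condition (vi) pins down the bidegree of every inner bracket $\{b,c\}_i$ to be $|b|+|c|\in{\bf L}\times{\bf L}$, which lets me pull the degree-dependent scalars out of the nested brackets; the plan is then to show that the six terms of $M$ cancel in cyclic pairs, the cancellation being forced by the alternating dependence of those scalars on the bidegrees. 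I expect this final bookkeeping to be the genuinely delicate step: without the bigraded control on the degrees of the inner brackets there is no reason for the mixed terms to match up, so the bigraded hypothesis is exactly what makes compatibility hold, and the whole proof hinges on this cyclic cancellation.
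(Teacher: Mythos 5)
Your handling of the linear axioms---skew-symmetry, Leibniz, the grading condition (vi), and the fact that $\Delta$ stays a Poisson map because the induced bracket on $A\otimes A$ is linear in the underlying bracket---is correct, and it is essentially the entire content of the paper's own proof, which checks (v) and declares the rest ``easy to see.'' Your reduction of the Jacobi identity to the vanishing of the mixed Jacobiator $M$, together with the remark that $M$ is a triderivation (it is the Jacobiator of $\{\cdot,\cdot\}_1+\{\cdot,\cdot\}_2$ minus the two pure Jacobiators), so that it suffices to check $M=0$ on homogeneous generators, is also correct: you have located exactly where the content lies. The genuine gap is your final step. Condition (vi) only places the inner bracket $\{b,c\}_1$ in the component $A_{|b|+|c|}$; it does \emph{not} make $\{b,c\}_1$ a scalar multiple of $bc$, so there are no ``degree-dependent scalars'' to pull out of the nested brackets, and the six mixed terms have no reason to cancel. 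The cyclic cancellation you envisage is real precisely when one of the brackets is \emph{diagonal}, i.e.\ $\{a,b\}_2=\theta(|a|,|b|)\,ab$ with $\theta$ biadditive and alternating on $K\times K$: then $\{a,\{b,c\}_2\}_1=\theta(|b|,|c|)(\{a,b\}_1c+b\{a,c\}_1)$ while $\{a,\{b,c\}_1\}_2=\theta(|a|,|b|+|c|)\,a\{b,c\}_1$ (this is where (vi) enters), and the coefficients of $a\{b,c\}_1$, $b\{c,a\}_1$, $c\{a,b\}_1$ in $M$ each vanish by skewness and biadditivity of $\theta$. That diagonal case, $\{\cdot,\cdot\}_2=\{\cdot,\cdot\}_u$, is the only case in which the paper ever applies the lemma (proof of Theorem~\ref{PMSTRU}).

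Moreover, no bookkeeping can complete your argument at the stated level of generality, because the lemma as stated is false: two $K$-bigraded Poisson Hopf brackets need not be compatible. Sketch: on $A=\Bbb C[SL_3]$ with its ${\bf L}$-bigrading by left and right torus weights, classical Poisson--Lie theory turns every $\frak h$-invariant $r\in\bigwedge^2\frak{sl}_3$ with $\frak{sl}_3$-invariant Schouten square $[[r,r]]$ into an ${\bf L}$-bigraded Poisson Hopf bracket on $A$ (the Sklyanin bivector $\ell_{g*}r-\rho_{g*}r$). Take $r_1=x_\alpha\wedge x_{-\alpha}+x_\beta\wedge x_{-\beta}+x_{\alpha+\beta}\wedge x_{-\alpha-\beta}$ (the standard bracket) and $r_2=x_\alpha\wedge x_{-\alpha}-x_\beta\wedge x_{-\beta}-x_{\alpha+\beta}\wedge x_{-\alpha-\beta}$; the latter qualifies because $r_2=-\,\text{Ad}(\dot s_\alpha)r_1$ for a representative $\dot s_\alpha$ of the simple reflection $s_\alpha$ in the normalizer of the torus, and a global sign and an inner automorphism normalizing $H$ preserve all the hypotheses. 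The sum of the two brackets is then the Sklyanin bracket of $r_1+r_2=2\,x_\alpha\wedge x_{-\alpha}$, whose Schouten square is a nonzero multiple of $[x_\alpha,x_{-\alpha}]\wedge x_\alpha\wedge x_{-\alpha}$; this is not $\frak{sl}_3$-invariant (apply $\text{ad}_{x_\beta}$), so the sum violates Jacobi. Your proposal therefore exposes a real defect of Lemma~\ref{PADD} itself: the paper's proof never confronts the Jacobi identity, and the lemma is correct (and is needed) only under an additional hypothesis, e.g.\ that $\{\cdot,\cdot\}_2$ is the diagonal bracket $\{\cdot,\cdot\}_u$, or that the two brackets are compatible---and under that hypothesis your computation closes exactly as you describe.
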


\begin{proof}
It is easy to see that $A$ is a $K$-bigraded Poisson algebra with the Poisson bracket $\{\cdot,\cdot\}$.
Since $\Delta$ is a Poisson algebra homomorphism in $A_1$ and $A_2$, it is also a Poisson algebra homomorphism in $A$.
Hence $A=(A,\iota,m,\epsilon,\Delta, S, \{\cdot,\cdot\})$ is also a $K$-bigraded Poisson Hopf algebra.
\end{proof}

\begin{thm}\label{PMSTRU}
The function algebra $\Bbb C[G]$ is an ${\bf L}$-bigraded Poisson Hopf algebra with Poisson bracket
\begin{equation}\label{MPBB}
\{c_{f,w}, c_{g,v}\}=[(\Phi_+\eta|\rho)-(\Phi_+\beta|\gamma)]c_{f,w} c_{g,v}
+2\sum_{\nu\in\bf R^+} (c_{f,x_\nu w}c_{g,x_{-\nu}v}-c_{fx_\nu,w}c_{gx_{-\nu},v})
\end{equation}
for weight vectors
$f\in (M^*)_\beta, g\in (N^*)_\gamma, w\in M_\eta, v\in N_\rho$ of $M,N\in{\mathcal{C}}(\frak d) $,
\end{thm}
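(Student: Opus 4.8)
The plan is to realize the bracket \eqref{MPBB} as a sum of two $\mathbf{L}$-bigraded Poisson Hopf brackets and then invoke Lemma~\ref{PADD}. First I would use \eqref{Phi} to rewrite the scalar coefficient: since $(\Phi_+\eta|\rho)=u(\eta,\rho)+(\eta|\rho)$ and $(\Phi_+\beta|\gamma)=u(\beta,\gamma)+(\beta|\gamma)$, subtracting the bracket \eqref{PBB} of Proposition~\ref{BPHA} from \eqref{MPBB} gives
\[ \{c_{f,w},c_{g,v}\}-\{c_{f,w},c_{g,v}\}_r=[u(\eta,\rho)-u(\beta,\gamma)]\,c_{f,w}c_{g,v}. \]
Accordingly, I would define a bilinear product $\{\cdot,\cdot\}_u$ on the $\mathbf{L}$-bigraded algebra $\Bbb C[G]$ by $\{a,b\}_u=[u(\eta,\rho)-u(\beta,\gamma)]\,ab$ for homogeneous $a\in\Bbb C[G]_{\beta,\eta}$, $b\in\Bbb C[G]_{\gamma,\rho}$, so that $\{\cdot,\cdot\}=\{\cdot,\cdot\}_r+\{\cdot,\cdot\}_u$. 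Since $(\Bbb C[G],\{\cdot,\cdot\}_r)$ is an $\mathbf{L}$-bigraded Poisson Hopf algebra by Proposition~\ref{BPHA}, Lemma~\ref{PADD} reduces the theorem to showing that $(\Bbb C[G],\{\cdot,\cdot\}_u)$ is one as well. This product is well defined because it depends only on the bidegrees, which are intrinsic to homogeneous elements.

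The product $\{\cdot,\cdot\}_u$ is a toral twist, and its Poisson axioms come for free from $u$ being alternating. Writing $c((\beta,\eta),(\gamma,\rho))=u(\eta,\rho)-u(\beta,\gamma)$, the form $c$ on $\mathbf{L}\times\mathbf{L}$ is bi-additive and skew-symmetric since $u$ is a skew symmetric bilinear form on $\frak h^*$, and $\{a,b\}_u=c(\deg a,\deg b)\,ab$ on homogeneous elements. Skew-symmetry of the bracket then follows from skew-symmetry of $c$ together with commutativity of $\Bbb C[G]$; the Leibniz rule follows from additivity of $c$ in each argument and additivity of the bidegree under multiplication; and the Jacobi identity reduces, after collecting the coefficient of the common monomial $abd$, to
\[ c(b,d)[c(a,b)+c(a,d)]+c(d,a)[c(b,d)+c(b,a)]+c(a,b)[c(d,a)+c(d,b)]=0, \]
which is forced by skew-symmetry of $c$ alone (here $c(a,b)$ abbreviates $c(\deg a,\deg b)$). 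Condition (vi) is immediate since $\{a,b\}_u$ is a scalar multiple of $ab\in\Bbb C[G]_{\beta+\gamma,\eta+\rho}$.

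The one point requiring the full bigraded Hopf structure, and the step I expect to be the crux, is condition (v): that $\Delta$ is a Poisson algebra homomorphism for $\{\cdot,\cdot\}_u$. Here I would take $a\in\Bbb C[G]_{\beta,\eta}$, $b\in\Bbb C[G]_{\gamma,\rho}$ and use axiom (ii) to write $\Delta(a)=\sum_i a_i'\otimes a_i''$ with $a_i'\in\Bbb C[G]_{\beta,\nu_i}$, $a_i''\in\Bbb C[G]_{-\nu_i,\eta}$, and $\Delta(b)=\sum_j b_j'\otimes b_j''$ with $b_j'\in\Bbb C[G]_{\gamma,\sigma_j}$, $b_j''\in\Bbb C[G]_{-\sigma_j,\rho}$. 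Computing the tensor-product bracket $\{\Delta(a),\Delta(b)\}_u$ produces, for each pair $(i,j)$, the scalar
\[ c((\beta,\nu_i),(\gamma,\sigma_j))+c((-\nu_i,\eta),(-\sigma_j,\rho))=[u(\nu_i,\sigma_j)-u(\beta,\gamma)]+[u(\eta,\rho)-u(\nu_i,\sigma_j)], \]
in which the intermediate-weight terms $u(\nu_i,\sigma_j)$ cancel, leaving the constant $u(\eta,\rho)-u(\beta,\gamma)$ independent of $i,j$. Hence $\{\Delta(a),\Delta(b)\}_u=[u(\eta,\rho)-u(\beta,\gamma)]\Delta(a)\Delta(b)=\Delta(\{a,b\}_u)$, the last equality because $\Delta$ is an algebra homomorphism. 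This cancellation, enabled precisely by the shape $A_{\lambda,\nu}\otimes A_{-\nu,\mu}$ of the comultiplication in (ii), is the entire content of the verification. Combining it with Lemma~\ref{PADD} and Proposition~\ref{BPHA} yields the theorem.
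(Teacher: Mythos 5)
Your proposal is correct and follows essentially the same route as the paper: decompose $\{\cdot,\cdot\}$ as $\{\cdot,\cdot\}_r+\{\cdot,\cdot\}_u$ using (\ref{Phi}), check that the toral twist $\{\cdot,\cdot\}_u$ makes $\Bbb C[G]$ an ${\bf L}$-bigraded Poisson Hopf algebra, and conclude via Proposition~\ref{BPHA} and Lemma~\ref{PADD}. The only difference is that the paper dismisses the verification of the axioms for $\{\cdot,\cdot\}_u$ as ``easy to check,'' whereas you carry it out explicitly --- in particular the compatibility of $\Delta$ with $\{\cdot,\cdot\}_u$ via the telescoping cancellation of the intermediate weights coming from axiom (ii), which is indeed the one step that genuinely uses the bigraded Hopf structure.
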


\begin{rem}\label{Phil}
(1) Suppose that the skew-symmetric bilinear form $u$ in \ref{SKEW} is equal to zero. Then we obtain (\ref{BPHA}) from (\ref{MPBB}) by (\ref{Phi}). Moreover the subspace $\langle h_\lambda-k_\lambda|\lambda\in\frak h^*\rangle$ of $\frak d$ is a Lie ideal of $\frak d$ and $\frak d/\langle h_\lambda-k_\lambda|\lambda\in\frak h^*\rangle\cong \frak g$ by (\ref{Lie bracket}).
Thus $\frak d$ and (\ref{MPBB}) are  generalizations of $\frak g$ and (\ref{BPHA}) respectively.

(2) One should compare (\ref{BPHA}) and (\ref{MPBB}) with \cite[(6) of I.8.16 Theorem]{BrGo} and \cite[Corollary 3.10]{HoLeT} respectively. Namely, the Poisson Hopf algebras $\Bbb C[G]$ with Poisson brackets (\ref{BPHA}) and (\ref{MPBB}) can be considered
as Poisson versions of the quantized algebras $\mathcal{O}_q(G)$ in \cite[I.8.16 Theorem]{BrGo} and $\Bbb C_{q,p}[G]$
in \cite{HoLeT} respectively.
\end{rem}

\begin{proof}[Proof of Theorem~\ref{PMSTRU}]
Note that $\Bbb C[G]$ is an ${\bf L}$-bigraded commutative Hopf algebra and that  $u$ in \ref{SKEW} is a skew symmetric bilinear form on $\bf L$. Define a bilinear product
$\{\cdot,\cdot\}_u$ on $\Bbb C[G]$ by
$$\{a_{\lambda,\mu},b_{\lambda',\mu'}\}_u=[u(\mu,\mu')-u(\lambda,\lambda')]a_{\lambda,\mu}b_{\lambda',\mu'}$$
for $a_{\lambda,\mu}\in\Bbb C[G]_{\lambda,\mu}$ and $b_{\lambda',\mu'}\in\Bbb C[G]_{\lambda',\mu'}$.
Then it is easy to check that $\Bbb C[G]$ is an ${\bf L}$-bigraded Poisson  Hopf algebra with Poisson bracket $\{\cdot,\cdot\}_u$
since $u$ is skew-symmetric. Hence the result follows by Proposition~\ref{BPHA} and Lemma~\ref{PADD} since
$$u(\eta,\rho)+(\eta|\rho)=(\Phi_+\eta|\rho),\ \ u(\beta,\gamma)+(\beta|\gamma)=(\Phi_+\beta|\gamma)$$
by (\ref{Phi}).
\end{proof}



 \section{Poisson prime  ideals of $\Bbb C[G]$}

 In this section we restate  some basic definitions and properties for  $\Bbb C[G]$ using Poisson terminologies, that appear in the quantized algebra $\Bbb C_{q,p}[G]$ of \cite[\S4]{HoLeT}.
 \bigskip

 Assume throughout the section that, for each $\Lambda\in{\bf L}^+$,  $V(\Lambda)$ denotes the highest weight $\frak g$-module with the highest weight $\Lambda$ and thus $V(\Lambda)\in\mathcal{C}(\frak d)$ satisfies the action (\ref{E}).
 Let $M\in\mathcal{C}(\frak d)$. Then $M=\oplus_{\mu\in{\bf L}}M_\mu$. We often write $v_\mu$ for $v\in M_\mu$.

\begin{thm}\label{MMMM}
Set
 $$\Bbb C[G]^+=\sum_{\Lambda\in {\bf L}^+}\sum_{f\in V(\Lambda)^*}\Bbb C c^{V(\Lambda)}_{f,v_\Lambda},\ \
 \Bbb C[G]^-=\sum_{\Lambda\in {\bf L}^+}\sum_{f\in V(\Lambda)^*}\Bbb C c^{V(\Lambda)}_{f,v_{w_0\Lambda}},
 $$
 where $w_0$ is the longest element of  the Weyl group $W$ of $\frak g$.
The multiplication map $\Bbb C[G]^+\otimes \Bbb C[G]^-\longrightarrow \Bbb C[G]$ is surjective.
\end{thm}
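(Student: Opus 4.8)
The plan is to prove that the image $\Bbb C[G]^+\Bbb C[G]^-$ of the multiplication map exhausts $\Bbb C[G]$. Since $\Bbb C[G]$ is spanned by the elements $c^M_{f,v}$ with $M\in\mathcal C(\frak d)$ a finite direct sum of modules $V(\Lambda)$, and since $c^{M\oplus N}_{(f,g),(v,w)}=c^M_{f,v}+c^N_{g,w}$ by (\ref{J}), it suffices to show that every $c^{V(\Lambda)}_{f,v}$ (with $\Lambda\in{\bf L}^+$, $v\in V(\Lambda)$, $f\in V(\Lambda)^*$) lies in $\Bbb C[G]^+\Bbb C[G]^-$.

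First I would check that $\Bbb C[G]^+$ and $\Bbb C[G]^-$ are each subalgebras. For $\Lambda,\Lambda'\in{\bf L}^+$ the Cartan component $V(\Lambda+\Lambda')\subseteq V(\Lambda)\otimes V(\Lambda')$ has highest weight vector $v_\Lambda\otimes v_{\Lambda'}$; since it is a submodule, $z(v_\Lambda\otimes v_{\Lambda'})$ stays inside it for all $z\in U(\frak d)$, so by (\ref{J}) the product $c^{V(\Lambda)}_{f,v_\Lambda}c^{V(\Lambda')}_{f',v_{\Lambda'}}=c^{V(\Lambda)\otimes V(\Lambda')}_{f\otimes f',\,v_\Lambda\otimes v_{\Lambda'}}$ equals $c^{V(\Lambda+\Lambda')}_{h,\,v_{\Lambda+\Lambda'}}$ with $h=(f\otimes f')|_{V(\Lambda+\Lambda')}$, which lies in $\Bbb C[G]^+$. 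Dually, $v_{w_0\Lambda}\otimes v_{w_0\Lambda'}$ is the lowest weight vector $v_{w_0(\Lambda+\Lambda')}$ of the same Cartan component, so $\Bbb C[G]^-$ is a subalgebra. As $1=c^{V(0)}_{1,1}$ lies in both and $\Bbb C[G]$ is commutative, $B:=\Bbb C[G]^+\Bbb C[G]^-$ is a subalgebra containing $\Bbb C[G]^+$ and $\Bbb C[G]^-$.

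Next I would use the left translation action of $U(\frak d)$ on $\Bbb C[G]$, given on matrix coefficients by $y\cdot c_{f,v}=c_{f,yv}$. Because each $y\in\frak d$ is primitive in $U(\frak d)$, this action is by derivations, $y\cdot(ab)=(y\cdot a)b+a(y\cdot b)$, and the key point is that for $x_{-\alpha}\in\frak n^-$ and $b=c_{g,v_{w_0\mu}}\in\Bbb C[G]^-$ one has $x_{-\alpha}\cdot b=c_{g,\,x_{-\alpha}v_{w_0\mu}}=0$, since $v_{w_0\mu}$ is a lowest weight vector. I would then induct on the height of $\Lambda-\mathrm{wt}(v)$ to show $c^{V(\Lambda)}_{f,v}\in B$. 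If $\mathrm{wt}(v)=\Lambda$ then $c_{f,v}\in\Bbb C[G]^+\subseteq B$. Otherwise, as $V(\Lambda)=U(\frak n^-)v_\Lambda$ I may write $v=\sum_i x_{-\alpha_i}w_i$ with each $w_i$ of weight $\mathrm{wt}(v)+\alpha_i$ (smaller height of $\Lambda-\mathrm{wt}$); writing $c_{f,w_i}=\sum_k a_{ik}b_{ik}\in B$ with $a_{ik}=c_{f_{ik},v_{\Lambda_{ik}}}\in\Bbb C[G]^+$ and $b_{ik}\in\Bbb C[G]^-$, the derivation property and the vanishing above give $c_{f,v}=\sum_i x_{-\alpha_i}\cdot c_{f,w_i}=\sum_{i,k}(x_{-\alpha_i}\cdot a_{ik})\,b_{ik}=\sum_{i,k}c_{f_{ik},\,x_{-\alpha_i}v_{\Lambda_{ik}}}\,b_{ik}$. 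Granting that the depth-one coefficients $c^{V(\Lambda')}_{f',\,x_{-\alpha}v_{\Lambda'}}$ lie in $B$, and using that $B$ is a subalgebra with $b_{ik}\in\Bbb C[G]^-\subseteq B$, this forces $c_{f,v}\in B$.

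This reduces the theorem to the single statement that $c^{V(\Lambda)}_{f,\,x_{-\alpha}v_\Lambda}\in B$ for every $\Lambda\in{\bf L}^+$ and every simple root $\alpha$, and I expect this to be the main obstacle. Equivalently, one must realize the weight vector $x_{-\alpha}v_\Lambda$ as a combination of the projections of the extremal tensors $v_{\Lambda'}\otimes v_{w_0\mu}$ onto the $V(\Lambda)$-isotypic component of $V(\Lambda')\otimes V(\mu)$, for suitable $\Lambda',\mu\in{\bf L}^+$ with $\Lambda'+w_0\mu=\Lambda-\alpha$. I would arrange this by taking $\mu$ large, decomposing $V(\Lambda')\otimes V(\mu)$ by Clebsch--Gordan, and showing that the relevant projection of $v_{\Lambda'}\otimes v_{w_0\mu}$ is nonzero; since $V(\Lambda)_{\Lambda-\alpha}$ is one-dimensional this would pin down $x_{-\alpha}v_\Lambda$ up to a scalar. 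The nonvanishing of this projection is the representation-theoretic crux, and it is here that the argument must invoke the fine structure of tensor-product decompositions (the Parthasarathy--Ranga Rao--Varadarajan component, together with the cyclicity of the highest$\otimes$lowest weight vector).
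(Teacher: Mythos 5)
Your reduction is correct as far as it goes, and it is worth saying that the paper itself offers no self-contained argument here: its ``proof'' is the single sentence that one should mimic \cite[2.2.1 of Chapter 3]{KoSo} or \cite[9.2.2 Proposition]{Jos}. Your preliminary steps all check out: $\Bbb C[G]^{\pm}$ are subalgebras (the Cartan-component argument is valid, and $v_{w_0\Lambda}\otimes v_{w_0\Lambda'}$ does span the lowest weight line of $V(\Lambda+\Lambda')\subseteq V(\Lambda)\otimes V(\Lambda')$); the operators $y\cdot c_{f,v}=c_{f,yv}$ are well defined (they are the right translations $c\mapsto c(\cdot\,y)$ on $U(\frak d)^{\circ}$), act by derivations for primitive $y$, and kill $\Bbb C[G]^-$ when $y\in\frak n^-$; and the induction on $\text{ht}(\Lambda-\text{wt}(v))$ correctly reduces Theorem~\ref{MMMM} to the claim that $c^{V(\Lambda)}_{f,\,x_{-\alpha}v_\Lambda}\in B:=\Bbb C[G]^+\Bbb C[G]^-$ for every $\Lambda\in{\bf L}^+$ and every simple root $\alpha$. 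But there the proof stops. The depth-one claim is not a technical loose end: it is exactly where the content of the theorem sits, and it is exactly what the cited sources supply, in the form of the cyclicity of $v_\mu\otimes v_{w_0\nu}$ in $V(\mu)\otimes V(\nu)$ (Kostant's theorem classically, its quantum analogue in \cite{Jos}). As submitted, your proposal is a correct reduction plus an unproven statement, i.e.\ a genuine gap; note in particular that your own induction machinery cannot produce the depth-one case (applying it to $v=x_{-\alpha}v_\Lambda$ is circular), so something external must be invoked.

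The gap is fillable, and more cheaply than your sketch suggests -- no ``large $\mu$'', Clebsch--Gordan combinatorics, or PRV theorem is needed. First, occurrence: if $(\Lambda|\alpha)=0$ then $x_{-\alpha}v_\Lambda=0$ and there is nothing to prove; if $(\Lambda|\alpha)>0$, take $\mu=-w_0\Lambda$ and $\Lambda'=2\Lambda-\alpha$ (dominant because $\alpha$ is simple), so that $\Lambda'+w_0\mu=\Lambda-\alpha$; the required occurrence of $V(\Lambda)$ in $V(\Lambda')\otimes V(\mu)$ is equivalent to $V(2\Lambda-\alpha)\subseteq V(\Lambda)\otimes V(\Lambda)$, which follows from an $\frak{sl}_2$-computation: the weight space of weight $2\Lambda-\alpha$ in $V(\Lambda)\otimes V(\Lambda)$ is two-dimensional and contains exactly one line annihilated by $\frak n^+$. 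Second, nonvanishing: in the present classical setting $U(\frak n^+)$ is a Hopf subalgebra of $U(\frak d)$ and $\frak n^+v_{\Lambda'}=0$, so $U(\frak n^+)(v_{\Lambda'}\otimes v_{w_0\mu})=v_{\Lambda'}\otimes V(\mu)$, and a short induction on monomial length gives $U(\frak n^-)(v_{\Lambda'}\otimes V(\mu))=V(\Lambda')\otimes V(\mu)$; hence $v_{\Lambda'}\otimes v_{w_0\mu}$ is cyclic, so its projection to every isotypic component is nonzero (otherwise it would lie in a proper submodule). Since $V(\Lambda)_{\Lambda-\alpha}$ is one-dimensional, any nonzero projection onto a copy of $V(\Lambda)$ is a multiple of $x_{-\alpha}v_\Lambda$, and extending functionals by zero off that copy exhibits $c^{V(\Lambda)}_{f,\,x_{-\alpha}v_\Lambda}$ inside the span of the products $c^{V(\Lambda')}_{g,v_{\Lambda'}}c^{V(\mu)}_{h,v_{w_0\mu}}$. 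With these two lemmas inserted at your ``crux'', your argument becomes a complete proof, structurally different from (and arguably more elementary than) the arguments of \cite{KoSo} and \cite{Jos} that the paper defers to; without them it does not yet prove the theorem.
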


\begin{proof}
It is proved by mimicking the proof of  \cite[2.2.1 of  Chapter 3]{KoSo} or \cite[9.2.2 Proposition]{Jos}.
\end{proof}

Denote by $U(\frak b^+)$ and $U(\frak b^-)$ the subalgebras of $U(\frak d)$ generated by
$$
h_\lambda\in\frak h,\ \  x_\alpha, \ \ \ \alpha\in{\bf R}^+$$
and $$k_\lambda\in\frak k,\ \  x_{-\alpha}, \ \ \ \alpha\in{\bf R}^+$$
respectively.

Let $M\in\mathcal{C}(\frak d)$.  Given a subset $X\subseteq M$, write $X^\bot$ for the orthogonal space of $X$ in $M^*$, that is,
$$X^\bot=\{f\in M^* \ |\ f(X)=0\}.$$
Let $w_0$ be the longest element of  the Weyl group $W$.
For $\Lambda\in {\bf L}^+$ and $y\in W$, define the ideals $I^+_y$ and $I^-_y$ of the $\bf L$-bigraded Poisson Hopf algebra $\Bbb C[G]$ with Poisson bracket (\ref{MPBB}):
$$
I^+_y=\langle c^{V(\Lambda)}_{f,v_\Lambda}\ |\ f\in (U(\frak b^+)V(\Lambda)_{y\Lambda})^\bot \rangle,  \ \ \
I^-_y=\langle c^{V(\Lambda)}_{f,v_{w_0\Lambda}}\ |\ f\in (U(\frak b^-)V(\Lambda)_{yw_0\Lambda})^\bot\rangle.
$$
For $w=(w_+,w_-)\in W\times W$, define
$$I_w=I_{w_+}^+ +I_{w_-}^-.$$

\begin{lem}\label{CPRIB}
For any $w=(w_+,w_-)\in W\times W$, $I_w$ is a  homogeneous Poisson ideal of $\Bbb C[G]$.
\end{lem}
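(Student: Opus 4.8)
The plan is to reduce to the two summands: since $I_w=I^+_{w_+}+I^-_{w_-}$ and a sum of two homogeneous Poisson ideals is again a homogeneous Poisson ideal, it suffices to prove that each $I^+_y$ and each $I^-_y$ is a homogeneous Poisson ideal. I would treat $I^+_y$ in detail and obtain $I^-_y$ by the symmetric argument, replacing the highest weight vector $v_\Lambda$ by the lowest weight vector $v_{w_0\Lambda}$, the positive roots $\nu$ by the negative roots $-\nu$, and $U(\frak b^+)$ by $U(\frak b^-)$. For the homogeneity, I would first observe that $U(\frak b^+)V(\Lambda)_{y\Lambda}$ is a weight-graded subspace of $V(\Lambda)$, because $U(\frak b^+)$ is spanned by products of the weight vectors $h_\lambda$ (weight $0$) and $x_\alpha$ ($\alpha\in{\bf R}^+$, weight $\alpha$); its orthogonal space $(U(\frak b^+)V(\Lambda)_{y\Lambda})^\bot$ then splits into weight components, each lying in some $(V(\Lambda)^*)_\beta$ by Lemma~\ref{WEIG}(1). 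Hence $I^+_y$ is generated by the homogeneous elements $c^{V(\Lambda)}_{f,v_\Lambda}$ with $f$ a weight vector in $(U(\frak b^+)V(\Lambda)_{y\Lambda})^\bot$, each of which lies in $\Bbb C[G]_{\beta,\Lambda}$, so $I^+_y$ is homogeneous.

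Next I would establish the Poisson property $\{\Bbb C[G],I^+_y\}\subseteq I^+_y$. Because $\{\cdot,\cdot\}$ is skew symmetric and a derivation in each variable, and the elements $c^N_{g,u}$ with $g,u$ weight vectors span $\Bbb C[G]$, the Leibniz rule reduces this inclusion to checking that $\{c^{V(\Lambda)}_{f,v_\Lambda},\,c^N_{g,u}\}\in I^+_y$ for each homogeneous generator $c^{V(\Lambda)}_{f,v_\Lambda}$ of $I^+_y$ and each such $c^N_{g,u}$. Applying the bracket formula (\ref{MPBB}) with first entry $c^{V(\Lambda)}_{f,v_\Lambda}$, the decisive simplification is that $v_\Lambda$ is a highest weight vector, so $x_\nu v_\Lambda=0$ for all $\nu\in{\bf R}^+$; this annihilates every term containing $c^{V(\Lambda)}_{f,x_\nu v_\Lambda}$ and leaves
\[
\{c^{V(\Lambda)}_{f,v_\Lambda},\,c^N_{g,u}\}
=[(\Phi_+\Lambda|\rho)-(\Phi_+\beta|\gamma)]\,c^{V(\Lambda)}_{f,v_\Lambda}c^N_{g,u}
-2\sum_{\nu\in{\bf R}^+} c^{V(\Lambda)}_{fx_\nu,v_\Lambda}\,c^N_{gx_{-\nu},u},
\]
where $f\in(V(\Lambda)^*)_\beta$, $g\in(N^*)_\gamma$ and $u\in N_\rho$. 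The first summand clearly lies in $I^+_y$ since $c^{V(\Lambda)}_{f,v_\Lambda}$ does.

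The crux is to show that each surviving factor $c^{V(\Lambda)}_{fx_\nu,v_\Lambda}$ is again a generator of $I^+_y$, for which I would use the right $\frak g$-action on $V(\Lambda)^*$ from Lemma~\ref{WEIG}(2). Since $\nu\in{\bf R}^+$ forces $x_\nu\in\frak b^+\subseteq U(\frak b^+)$, and $U(\frak b^+)V(\Lambda)_{y\Lambda}$ is $U(\frak b^+)$-stable, we get $x_\nu z\in U(\frak b^+)V(\Lambda)_{y\Lambda}$ for every $z\in U(\frak b^+)V(\Lambda)_{y\Lambda}$, whence $(fx_\nu)(z)=f(x_\nu z)=0$. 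Thus $fx_\nu\in(U(\frak b^+)V(\Lambda)_{y\Lambda})^\bot$ and $c^{V(\Lambda)}_{fx_\nu,v_\Lambda}\in I^+_y$, so every term of the displayed bracket lies in $I^+_y$. I expect this stability of the orthogonal complement under the right $U(\frak b^+)$-action, combined with the vanishing $x_\nu v_\Lambda=0$, to be the essential point of the whole argument; once these are in place the rest is routine bookkeeping with (\ref{MPBB}) and the Leibniz rule, and the symmetric computation handles $I^-_y$ (using that $v_{w_0\Lambda}$ is a lowest weight vector, so $x_{-\nu}v_{w_0\Lambda}=0$, and that $U(\frak b^-)$ preserves $(U(\frak b^-)V(\Lambda)_{yw_0\Lambda})^\bot$). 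Therefore $I_w=I^+_{w_+}+I^-_{w_-}$ is a homogeneous Poisson ideal.
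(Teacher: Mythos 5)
Your proof is correct and follows the same route as the paper, whose own proof is just a two-line sketch: $I_{w_+}^+$ and $I_{w_-}^-$ are homogeneous because they are generated by homogeneous elements, and they are Poisson ideals ``by (\ref{MPBB})''. Your write-up supplies exactly the details that sketch leaves implicit — the weight-grading of $(U(\frak b^\pm)V(\Lambda)_{\cdot})^\bot$, the vanishing $x_\nu v_\Lambda=0$ killing half the terms of (\ref{MPBB}), and the stability of the orthogonal complement under the right $U(\frak b^\pm)$-action — so it is the paper's argument, fully fleshed out.
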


\begin{proof}
Since  $I_{w_+}^+$ and $I_{w_-}^-$ are generated by homogeneous elements, they are homogeneous ideals. Moreover  $I_{w_+}^+$ and $I_{w_-}^-$ are Poisson ideals by (\ref{MPBB}). Hence $I_w$ is a  homogeneous Poisson ideal.
\end{proof}

For any $w=(w_+,w_-)\in W\times W$, define the following homogeneous elements in the Poisson algebra $\Bbb C[G]/I_w$:
$$
c_{w\Lambda}=c^{V(\Lambda)}_{f_{-w_+\Lambda},v_\Lambda}+I_w, \ \ \
\tilde c_{w\Lambda}=c^{V(\Lambda)^*}_{v_{w_-\Lambda}, f_{-\Lambda}}+I_w , \ \ \ \Lambda\in {\bf L}^+.
$$
Note that $\tilde c_{w\Lambda}$ is of the form $c^{V(\Lambda)}_{f_{-w_-w_0\Lambda}, v_{w_0\Lambda}}+I_w$ since $w_0(\bf R^+)=\bf R^-$ by \cite[A.1.4]{Jos}.

An element $a$ of a Poisson algebra $A$ is said to be {\it Poisson normal} if $\{a,A\}\subseteq aA$. We should compare the following lemma with \cite[Lemma 4.2]{HoLeT}.

\begin{lem}\label{CPRIBB}
Let $\Lambda\in{\bf L}^+$.

(1) $\{c_{w\Lambda}, c_{f_{-\lambda}, v_\mu}+I_w\}
=[(\Phi_+\Lambda|\mu)-(\Phi_+w_+\Lambda|\lambda)](c_{f_{-\lambda}, v_\mu}+I_w)c_{w\Lambda}$.

(2) $\{\tilde c_{w\Lambda},c_{f_{-\lambda}, v_\mu}+I_w\}
=[(\Phi_-w_-\Lambda|\lambda)-(\Phi_-\Lambda|\mu)](c_{f_{-\lambda}, v_\mu}+I_w)\tilde c_{w\Lambda}$.

In particular, both $c_{w\Lambda}$ and $\tilde c_{w\Lambda}$ are
Poisson normal elements of  $\Bbb C[G]/I_w$.
\end{lem}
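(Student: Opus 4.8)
The plan is to evaluate both brackets directly from the formula (\ref{MPBB}) and then reduce modulo $I_w$. In each case the leading term of (\ref{MPBB}) already produces a scalar multiple of the product $c_{f_{-\lambda},v_\mu}\,c_{w\Lambda}$ (resp. $c_{f_{-\lambda},v_\mu}\,\tilde c_{w\Lambda}$), so the entire content of the lemma is that the off-diagonal sum $2\sum_{\nu\in{\bf R}^+}(\cdots)$ vanishes in $\Bbb C[G]/I_w$. The mechanism is that $c_{w\Lambda}$ and $\tilde c_{w\Lambda}$ are built from extremal weight vectors: one of the two families in the sum is killed because such a vector is annihilated by the appropriate root vectors, while the other family is forced into the annihilator conditions defining $I^+_{w_+}$ and $I^-_{w_-}$. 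Throughout I would use Lemma~\ref{WEIG} to track the weights of the translates $x_\nu p$ and $fx_\nu$, together with the skew-adjointness of $\Phi$ (immediate from $u$ being skew and $(\cdot|\cdot)$ symmetric), in the form $(\Phi_+\lambda|\mu)=-(\Phi_-\mu|\lambda)$, in order to recast the scalars.

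For (1) I would expand $\{c_{w\Lambda},c_{f_{-\lambda},v_\mu}\}$ with $f=f_{-w_+\Lambda}$ (weight $-w_+\Lambda$) and $w=v_\Lambda$ (weight $\Lambda$); the scalar factor is then literally $[(\Phi_+\Lambda|\mu)-(\Phi_+w_+\Lambda|\lambda)]$. In the sum, the terms $c_{f_{-w_+\Lambda},x_\nu v_\Lambda}(\cdots)$ vanish since $v_\Lambda$ is a highest weight vector, so $x_\nu v_\Lambda=0$ for $\nu\in{\bf R}^+$. For the remaining terms $c_{f_{-w_+\Lambda}x_\nu,v_\Lambda}(\cdots)$ I would check $c_{f_{-w_+\Lambda}x_\nu,v_\Lambda}\in I^+_{w_+}$, i.e. $f_{-w_+\Lambda}x_\nu\in(U(\frak b^+)V(\Lambda)_{w_+\Lambda})^\bot$: every vector of $U(\frak b^+)V(\Lambda)_{w_+\Lambda}$ has weight $\geq w_+\Lambda$, so in $(f_{-w_+\Lambda}x_\nu)(z)=f_{-w_+\Lambda}(x_\nu z)$ the element $x_\nu z$ has weight strictly above $w_+\Lambda$, where the weight vector $f_{-w_+\Lambda}$ is supported, and the pairing is $0$. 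As $I_w$ is an ideal the whole sum lies in $I_w$, and (1) follows.

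For (2) the naive expansion fails, because the extremal vector carrying $\tilde c_{w\Lambda}$ is now a lowest weight vector, which the positive root vectors $x_\nu$ on the first factor of (\ref{MPBB}) do not annihilate. I would instead use skew-symmetry, $\{\tilde c_{w\Lambda},c_{f_{-\lambda},v_\mu}\}=-\{c_{f_{-\lambda},v_\mu},\tilde c_{w\Lambda}\}$, and the presentation $\tilde c_{w\Lambda}=c^{V(\Lambda)^*}_{v_{w_-\Lambda},f_{-\Lambda}}$ in the second slot, where $f_{-\Lambda}$ is the lowest weight vector of $V(\Lambda)^*$. Then the first family of the sum dies because $x_{-\nu}f_{-\Lambda}=0$, and the second family is pushed into $I^-_{w_-}$ by the weight estimate dual to the one above. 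The leading scalar of $\{c_{f_{-\lambda},v_\mu},c^{V(\Lambda)^*}_{v_{w_-\Lambda},f_{-\Lambda}}\}$ is $-(\Phi_+\mu|\Lambda)+(\Phi_+\lambda|w_-\Lambda)$ (by Lemma~\ref{WEIG}(1) the weights are $\eta=\mu$, $\rho=-\Lambda$, $\beta=-\lambda$, $\gamma=w_-\Lambda$); negating and applying $(\Phi_+a|b)=-(\Phi_-b|a)$ turns this into exactly $[(\Phi_-w_-\Lambda|\lambda)-(\Phi_-\Lambda|\mu)]$. Finally, since the homogeneous elements $c_{f_{-\lambda},v_\mu}+I_w$ span $\Bbb C[G]/I_w$, both identities give $\{c_{w\Lambda},\Bbb C[G]/I_w\}\subseteq(\Bbb C[G]/I_w)c_{w\Lambda}$ and likewise for $\tilde c_{w\Lambda}$, which is Poisson normality. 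I expect the real obstacle to be the second step in each part, namely that the right-translated matrix coefficients lie in $I^\pm$; in particular the dual statement for (2), where the estimate must be carried out on $V(\Lambda)^*$ (equivalently, transported through the antipode $S$, under which $I^+_{w_+}$ and $I^-_{w_-}$ are interchanged), is the delicate point.
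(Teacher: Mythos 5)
Your proposal is correct and follows the same route as the paper, whose proof simply declares part (1) immediate from (\ref{MPBB}) and part (2) immediate from (\ref{MPBB}) and (\ref{Phi}): your expansion of the bracket, the vanishing of $x_\nu v_\Lambda$ and of $x_{-\nu}f_{-\Lambda}$, the weight estimates placing the right-translated coefficients $f_{-w_+\Lambda}x_\nu$ and $v_{w_-\Lambda}x_{-\nu}$ into $I^+_{w_+}$ and $I^-_{w_-}$, and the identity $(\Phi_+a|b)=-(\Phi_-b|a)$ are exactly the details the paper leaves to the reader. The only blemish is your parenthetical claim that the antipode $S$ interchanges $I^+_{w_+}$ and $I^-_{w_-}$, which is neither obvious nor needed, since your direct estimate on $V(\Lambda)^*\cong V(-w_0\Lambda)$ already does the job.
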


\begin{proof}
(1)  It is proved immediately by (\ref{MPBB}).

(2)  It is proved immediately by (\ref{MPBB}) and (\ref{Phi}).
\end{proof}

Observe that the sets
$$\begin{array}{c}\mathcal{E}_{w}^+=\{\alpha c_{w\Lambda}\ |\ \alpha\in\Bbb C^\times,\ \Lambda\in {\bf L}^+\}, \ \ \
\mathcal{E}_{w}^-=\{\alpha \tilde c_{w\Lambda}\ |\ \alpha\in\Bbb C^\times,\ \Lambda\in {\bf L}^+\}\\
\mathcal{E}_w=\mathcal{E}_{w}^+\mathcal{E}_{w}^-
\end{array}$$
are multiplicatively closed sets in $\Bbb C[G]/I_w$.
Denote the localization by
$$\Bbb C[G]_w=(\Bbb C[G]/I_w)_{\mathcal{E}_w}.$$

\begin{thm}\label{CPRIBBB}
For any Poisson prime ideal $P$ of $\Bbb C[G]$, there exists a unique $w=(w_+,w_-)\in W\times W$ such that
$I_w\subseteq P$ and $(P/I_w)\cap\mathcal{E}_w=\varnothing$.
\end{thm}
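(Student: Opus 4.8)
The plan is first to reduce to an honest commutative problem and to separate the two sides. Since $\Bbb C[G]$ is a finitely generated commutative $\Bbb C$-algebra by Theorem~\ref{ALGEB}, it is noetherian, so the Poisson prime $P$ is an ordinary prime ideal; in particular a product lies in $P$ only if one factor does. The ideal $I_w=I_{w_+}^++I_{w_-}^-$ and the set $\mathcal E_w=\mathcal E_w^+\mathcal E_w^-$ split into a $w_+$-part built from the highest-weight coefficients $c^{V(\Lambda)}_{f,v_\Lambda}$ and a $w_-$-part built from the lowest-weight coefficients $c^{V(\Lambda)}_{f,v_{w_0\Lambda}}$, and by Theorem~\ref{MMMM} these two families generate $\Bbb C[G]$. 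I would therefore manufacture $w_+$ and $w_-$ from $P$ separately and then set $w=(w_+,w_-)$. The combinatorial input I would record at the outset is the orthogonality computation that, for $y,z\in W$, the extremal coefficient $c_{y\Lambda}=c^{V(\Lambda)}_{f_{-y\Lambda},v_\Lambda}$ lies in $I_z^+$ exactly when $y\not\le z$ in the Bruhat order (because $f_{-y\Lambda}$ annihilates the Demazure module $U(\frak b^+)V(\Lambda)_{z\Lambda}$ iff its extremal vector $v_{y\Lambda}$ is absent, i.e. iff $y\not\le z$); dually for the minus side. A consequence is that $S_+=\{y\in W:I_y^+\subseteq P\}$ is upward closed in Bruhat order and nonempty, since $I_{w_0}^+=0$.

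For existence I would take $w_+$ minimal in $S_+$ and $w_-$ minimal in $S_-$, so that $I_w\subseteq P$ by Lemma~\ref{CPRIB}. It then remains to show $(P/I_w)\cap\mathcal E_w=\varnothing$, and since $P$ is prime this is equivalent to $c_{w_+\Lambda}\notin P$ and $\tilde c_{w_-\Lambda}\notin P$ for every $\Lambda\in{\bf L}^+$. Here the normality from Lemma~\ref{CPRIBB} and the multiplicative relation $c_{w_+\Lambda}c_{w_+\Lambda'}\equiv(\text{scalar})\,c_{w_+(\Lambda+\Lambda')}\pmod{I_{w_+}^+}$ show that $\{\Lambda:c_{w_+\Lambda}\in P\}$ is a submonoid of ${\bf L}^+$; assuming it is nonempty, I would feed it into a Demazure covering relation of the form $I_z^+=I_{w_+}^++\langle c_{w_+\Lambda}:\ldots\rangle$ for a predecessor $z\lessdot w_+$ to force $I_z^+\subseteq P$, contradicting minimality of $w_+$. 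The mirror argument on the minus side disposes of $\tilde c_{w_-\Lambda}$, completing existence.

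Uniqueness is the clean part. Suppose $w'=(w_+',w_-')$ also satisfies $I_{w'}\subseteq P$ and $(P/I_{w'})\cap\mathcal E_{w'}=\varnothing$. From $I_{w_+'}^+\subseteq P$ together with $c_{w_+'\Lambda}\notin P$ for all $\Lambda$, the same reasoning forces $w_+'$ to be minimal in $S_+$ as well. The orthogonality fact then finishes it directly: if $w_+\not\le w_+'$ then $c_{w_+\Lambda}\in I_{w_+'}^+\subseteq P$, contradicting $c_{w_+\Lambda}\notin P$; hence $w_+\le w_+'$, and by symmetry $w_+'\le w_+$, so $w_+=w_+'$. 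Running the mirror argument on the minus side gives $w_-=w_-'$, and therefore $w=w'$.

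The main obstacle is the existence step, specifically the passage from a single relation $c_{w_+\Lambda_0}\in P$ to the containment $I_z^+\subseteq P$ for a Bruhat-predecessor $z$. This is exactly where the Demazure-module structure of the $I_y^+$ must be used in earnest: one needs the precise description of how $U(\frak b^+)V(\Lambda)_{y\Lambda}$, and hence its ideal of vanishing coefficients, changes along a cover, and one must control the submonoid $\{\Lambda:c_{w_+\Lambda}\in P\}$ well enough that it is forced to be all of ${\bf L}^+$ once it is nonempty. This is the delicate representation-theoretic heart that \cite{HoLeT} and \cite{Jos} carry out in the quantum setting; transporting it requires only that the relevant generators be Poisson-normal homogeneous elements, which is supplied by Lemma~\ref{CPRIBB} and the ${\bf L}$-bigrading. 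By contrast, the reduction to the two sides via Theorem~\ref{MMMM}, the orthogonality computation, and the entire uniqueness argument are routine.
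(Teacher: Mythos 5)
Your uniqueness argument is correct and essentially self-contained: it uses only the easy direction of your orthogonality fact ($y\not\leq z$ implies $c_{y\Lambda}\in I_z^+$, because the one-dimensional weight space $V(\Lambda)_{y\Lambda}$ is missed by $U(\frak b^+)V(\Lambda)_{z\Lambda}$) together with antisymmetry of the Bruhat order, and it is arguably cleaner than what the paper writes. The genuine gap is the existence step, exactly where you flag it, and flagging it does not repair it. First, the asserted covering relation $I_z^+=I_{w_+}^++\langle c_{w_+\Lambda}:\ldots\rangle$ is false as stated: the correct statement (standard monomial theory, i.e.\ Schubert ideals are generated by extremal coefficients) is $I_z^+=I_{w_+}^++\langle c_{y\Lambda}\ :\ y\leq w_+,\ y\not\leq z\rangle$, and for a Bruhat predecessor $z$ of $w_+$ the index set is in general strictly larger than $\{w_+\}$ --- already in type $A_2$ with $w_+=s_1s_2$ and $z=s_1$ it contains $s_2$. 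Hence $c_{w_+\Lambda_0}\in P$ (even $c_{w_+\Lambda}\in P$ for all $\Lambda$) does not yield $I_z^+\subseteq P$, and no contradiction with minimality of $w_+$ ensues. Second, the intended mechanism that the set $\{\Lambda : c_{w_+\Lambda}\in P\}$ be forced to equal all of ${\bf L}^+$ cannot work literally ($0\in{\bf L}^+$ and $c_{w_+\cdot 0}=1\notin P$), and you never identify what the actual contradiction is. Third, the deferral to \cite{HoLeT} and \cite{Jos} is misdirected: those sources do not prove the quantum analogue via Bruhat covers or a description of how $(U(\frak b^+)V(\Lambda)_{z\Lambda})^\bot$ changes along a cover; they prove it by a maximal-weight-vector argument --- which is precisely the argument this paper transports --- so there is no citable result that fills your hole.

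For comparison, the paper's existence proof never mentions the Bruhat order. For each $\Lambda\in{\bf L}^+$ it considers $\mathcal{D}(\Lambda)=\{f\ :\ c^{V(\Lambda)}_{f,v_\Lambda}\notin P\}$, partially ordered by $f\leq f'$ iff $f'\in fU(\frak b^+)$; it is nonempty by the counit identity $1=\epsilon(c^{V(\Lambda)}_{f_{-\Lambda},v_\Lambda})=\sum_i S(c^{V(\Lambda)}_{f_{-\Lambda},v_i})c^{V(\Lambda)}_{g_i,v_\Lambda}$. If $f,f'$ are maximal in $\mathcal{D}(\Lambda)$ with weights $-\mu,-\mu'$, then in (\ref{MPBB}) the terms $c_{f,x_\nu v_\Lambda}$ vanish ($v_\Lambda$ is a highest weight vector) and the terms $c_{fx_\nu,v_\Lambda}$ lie in $P$ (maximality), so $\{c_{f,v_\Lambda},c_{f',v_\Lambda}\}\equiv[(\Phi_+\Lambda|\Lambda)-(\Phi_+\mu|\mu')]\,c_{f,v_\Lambda}c_{f',v_\Lambda} \pmod P$, and likewise with $f,f'$ interchanged; adding the two congruences, using skew-symmetry of the bracket and of $u$ and primality ($c_{f,v_\Lambda}c_{f',v_\Lambda}\notin P$), gives $(\Lambda|\Lambda)=(\mu|\mu')$, whence $\mu=\mu'=w_\Lambda\Lambda$ by \cite[Proposition 11.4]{Kac}. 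Running the same computation across two weights $\Lambda,\Lambda'$ and invoking \cite[Lemma 5.1.6 of Chapter 3]{KoSo} shows $w_\Lambda$ is independent of $\Lambda$, after which $I_{w_+}^+\subseteq P$ follows from maximality of $f_{-w_+\Lambda}$, and $(P/I_w)\cap\mathcal{E}_w=\varnothing$ comes for free since $c^{V(\Lambda)}_{f_{-w_+\Lambda},v_\Lambda}\notin P$ by construction. This Poisson-bracket computation is the engine your sketch is missing; if you insist on the Bruhat-minimality framework, you would have to import the full standard monomial theory description of the ideals $I_z^+$, which is considerably heavier than the argument the paper actually uses.
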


\begin{proof}
The proof is parallel to that of \cite[Theorem 4.4]{HoLeT}. We repeat it for completion.
For $\Lambda\in{\bf L}^+$,
define an order relation on the weight vectors of $V(\Lambda)^*$ by $f\leq f'$ if $f'\in fU(\frak b^+)$.
This induces a partial ordering on the set of one dimensional weight spaces.
For $\Lambda\in {\bf L}^+$, set
$$\mathcal{D}(\Lambda)=\{f\in (V(\Lambda)^*)_{-\mu}\ |\ c^{V(\Lambda)}_{f,v_\Lambda}\notin  P\}.$$
We claim that $\mathcal{D}(\Lambda)\neq\varnothing$ for all $\Lambda\in {\bf L}^+$.
Let $\{g_i\}$ and $\{v_i\}$ be dual bases for $V(\Lambda)^*$ and $V(\Lambda)$ that consist of weight vectors.
Since we have
$$1=\epsilon(c^{V(\Lambda)}_{f_{-\Lambda},v_\Lambda})
=\sum_{i\in I}S(c^{V(\Lambda)}_{f_{-\Lambda},v_i})c^{V(\Lambda)}_{g_i,v_\Lambda}$$
there exists an index $i$ such that $c^{V(\Lambda)}_{g_i,v_\Lambda}\notin P$ and thus $g_i\in\mathcal{D}(\Lambda)$ as claimed.

Suppose that $f,f'\in\mathcal{D}(\Lambda)$ are both maximal elements and set
 $f\in (V(\Lambda)^*)_{-\mu}$, $f'\in (V(\Lambda)^*)_{-\mu'}$.
 By (\ref{MPBB}) and the maximality of $f$ in $\mathcal{D}(\Lambda)$, we have
 $$\{c^{V(\Lambda)}_{f,v_\Lambda}, c^{V(\Lambda)}_{f',v_\Lambda}\}
 =[(\Phi_+\Lambda|\Lambda)-(\Phi_+\mu|\mu')]c^{V(\Lambda)}_{f,v_\Lambda}c^{V(\Lambda)}_{f',v_\Lambda}\ \ (\text{mod}\ P).$$
 Using the same argument for the maximality of $f'$, we obtain
 $$\{c^{V(\Lambda)}_{f',v_\Lambda},c^{V(\Lambda)}_{f,v_\Lambda}\}
 =[(\Phi_+\Lambda|\Lambda)-(\Phi_+\mu'|\mu)]c^{V(\Lambda)}_{f,v_\Lambda}c^{V(\Lambda)}_{f',v_\Lambda},\ \ (\text{mod}\ P).$$
Thus  we have $(\Lambda|\Lambda)=(\mu|\mu')$ by adding the above  equations since
$c^{V(\Lambda)}_{f,v_\Lambda}c^{V(\Lambda)}_{f',v_\Lambda}\notin P$ and $u$ is skew symmetric.
It follows that there exists $w_\Lambda\in W$ such that
$\mu=w_\Lambda\Lambda=\mu'$ by \cite[Proposition 11.4]{Kac}. That is, there exists a unique (up to scalar multiplication)
maximal element $g_\Lambda$  in
$\mathcal{D}(\Lambda)$ with weight $-w_\Lambda\Lambda$.

For $\Lambda$, $\Lambda'\in {\bf L}^+$, consider the maximal elements
$g_\Lambda\in\mathcal{D}(\Lambda)$ and $g_{\Lambda'}\in\mathcal{D}(\Lambda')$ with weights $-w_\Lambda\Lambda$
and $-w_{\Lambda'}\Lambda'$ respectively. Applying the argument above to a pair of such elements
$c^{V(\Lambda)}_{g_\Lambda,v_\Lambda}$ and $c^{V(\Lambda')}_{g_{\Lambda'},v_{\Lambda'}}$,
 we get
$(w_\Lambda\Lambda| w_{\Lambda'}\Lambda')=(\Lambda|\Lambda')$ for all $\Lambda,\Lambda'\in {\bf L}^+$.
It follows that $w_\Lambda=w_{\Lambda'}$ by \cite[Lemma 5.1.6 of Chapter 3]{KoSo} and thus there exists a unique $w_+\in W$ such that
$f_{-w_+\Lambda}$ is the maximal element in $\mathcal{D}(\Lambda)$ for all $\Lambda\in {\bf L}^+$.
For  each $\Lambda\in{\bf L}^+$ and weight vector $g\in V(\Lambda)^*$, if $c_{g,v_\Lambda}^{V(\Lambda)}\in I_{w_+}^+\setminus P$ then
$g\in\mathcal{D}(\Lambda)$ and thus $g\leq f_{-w_+\Lambda}$. But $g\in (U(\frak b^+)V(\Lambda)_{w_+\Lambda})^\bot$. Thus $g(U(\frak b^+)V(\Lambda)_{w_+\Lambda})=0$ and thus $f_{-w_+\Lambda}(V(\Lambda)_{w_+\Lambda})=0$,  that is a contradiction.
It follows that $I_{w_+}^+\subseteq P$. Using the same argument as above we deduce the existence of a unique $w_-\in W$ such that $I_{w_-}^-\subseteq P$. Now it is easy to check
 $(P/I_{w})\cap\mathcal{E}_{w}=\varnothing$ since $P$ is a prime ideal.
\end{proof}

For a Poisson algebra $A$, denote by $\Pspec A$ (respectively, $\Pprim A$) the set of all Poisson prime ideals
(respectively, Poisson primitive ideals) of $A$. By Theorem~\ref{CPRIBBB}, we have the following result.

\begin{cor}\label{PPPP}
For any $w=(w_+,w_-)\in W\times W$, set
$$\aligned
\Pspec_w \Bbb C[G]&=\{P\in \Pspec \Bbb C[G] \ |\ I_w\subseteq P\text{ and }(P/I_w)\cap\mathcal{E}_w=\varnothing\}\\
\Pprim_w \Bbb C[G]&=\Pprim \Bbb C[G] \cap\Pspec_w \Bbb C[G].
\endaligned$$
Then
$$\aligned
\Pspec \Bbb C[G]&=\bigsqcup_{w\in W\times W}\Pspec_w \Bbb C[G]\\
\Pprim \Bbb C[G] &=\bigsqcup_{w\in W\times W}\Pprim_w \Bbb C[G]
\endaligned$$
and, for each $w\in W\times W$,
$$\Pspec_w \Bbb C[G]\cong \Pspec\Bbb C[G]_w,\ \ \
\Pprim_w \Bbb C[G]\cong \Pprim\Bbb C[G]_w.$$
\end{cor}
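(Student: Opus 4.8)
The plan is to derive the whole statement from Theorem~\ref{CPRIBBB} together with the standard behaviour of Poisson prime and Poisson primitive ideals under quotients and under localization at Poisson normal elements. First I would settle the two decompositions. The existence clause of Theorem~\ref{CPRIBBB} says that each $P\in\Pspec\Bbb C[G]$ lies in $\Pspec_w\Bbb C[G]$ for some $w$, so these sets cover $\Pspec\Bbb C[G]$, while the uniqueness clause says the associated $w$ is unique, so $\Pspec_w\Bbb C[G]\cap\Pspec_{w'}\Bbb C[G]=\varnothing$ for $w\neq w'$. This yields the first displayed equality. The second is obtained by intersecting with $\Pprim\Bbb C[G]$: a Poisson primitive ideal is Poisson prime, so $\Pprim\Bbb C[G]=\Pprim\Bbb C[G]\cap\Pspec\Bbb C[G]=\bigsqcup_{w}\bigl(\Pprim\Bbb C[G]\cap\Pspec_w\Bbb C[G]\bigr)=\bigsqcup_{w}\Pprim_w\Bbb C[G]$.

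Next I would establish the homeomorphism $\Pspec_w\Bbb C[G]\cong\Pspec\Bbb C[G]_w$. The first thing to record is that $\Bbb C[G]_w=(\Bbb C[G]/I_w)_{\mathcal E_w}$ really is a Poisson algebra, and this is exactly where Lemma~\ref{CPRIBB} is used: the generators $c_{w\Lambda}$ and $\tilde c_{w\Lambda}$ of $\mathcal E_w$ are Poisson normal in $\Bbb C[G]/I_w$, so $\mathcal E_w$ is a multiplicative set of Poisson normal elements and the bracket extends uniquely to the localization by the Leibniz rule $\{a/s,\,b\}=s^{-1}\bigl(\{a,b\}-(a/s)\{s,b\}\bigr)$ and its analogue in the second slot. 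I would then chain the two classical correspondences: Poisson prime ideals of $\Bbb C[G]$ containing $I_w$ are in inclusion-preserving bijection with Poisson prime ideals of $\Bbb C[G]/I_w$, and among the latter those meeting $\mathcal E_w$ trivially correspond, by contraction and extension, to all Poisson prime ideals of $\Bbb C[G]_w$. The composite is the asserted bijection, and since each step preserves inclusions it is a homeomorphism for the Zariski topology given by Poisson prime ideals.

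It remains to cut this homeomorphism down to Poisson primitive ideals. Here I would use that $\Bbb C[G]$ is finitely generated over $\Bbb C$ by Theorem~\ref{ALGEB}; with $\{\omega_i\}$ the basis of $\bf L$ from its proof, the multiplicativity $c_{w(\Lambda+\Lambda')}\doteq c_{w\Lambda}c_{w\Lambda'}$ (and likewise for $\tilde c$) shows that $\mathcal E_w$ is, up to nonzero scalars, generated by the finitely many elements $c_{w\omega_i},\tilde c_{w\omega_i}$, so both $\Bbb C[G]/I_w$ and $\Bbb C[G]_w$ are affine algebras over the algebraically closed field $\Bbb C$. The Nullstellensatz then makes contraction a bijection from the maximal ideals of $\Bbb C[G]_w$ onto the maximal ideals of $\Bbb C[G]/I_w$ disjoint from $\mathcal E_w$. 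Since the bijection of the previous paragraph is an inclusion-preserving isomorphism between the Poisson ideals of $\Bbb C[G]_w$ and the Poisson ideals of $\Bbb C[G]/I_w$ disjoint from $\mathcal E_w$, the largest Poisson ideal inside a maximal ideal $M'$ of $\Bbb C[G]_w$ corresponds to the largest Poisson ideal inside its contraction $M'\cap(\Bbb C[G]/I_w)$; that is, Poisson cores match Poisson cores, giving $\Pprim_w\Bbb C[G]\cong\Pprim\Bbb C[G]_w$.

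The main obstacle is this last reduction to primitive ideals. The subtlety is that localization does not in general send maximal ideals to maximal ideals, so one must first exploit affineness and the Nullstellensatz to keep the maximal ideals under control, and only then verify that taking the largest Poisson ideal contained in a maximal ideal commutes with contraction. By contrast, the two decompositions are pure bookkeeping once Theorem~\ref{CPRIBBB} is available, and the Poisson prime homeomorphism is the standard quotient-and-localization correspondence applied to a Poisson algebra localized at Poisson normal elements.
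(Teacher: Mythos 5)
Your decompositions and the $\Pspec$ homeomorphism are correct and are essentially what the paper intends (it offers no details beyond ``By Theorem~\ref{CPRIBBB}''): the existence and uniqueness clauses give the two disjoint unions, and the standard quotient-plus-localization correspondence for Poisson prime ideals gives $\Pspec_w\Bbb C[G]\cong\Pspec\Bbb C[G]_w$. One misattribution along the way: Lemma~\ref{CPRIBB} is not what makes the bracket extend to $(\Bbb C[G]/I_w)_{\mathcal{E}_w}$ --- the bracket of a commutative Poisson algebra extends uniquely to \emph{any} localization, because derivations do. The place where Poisson normality is actually indispensable is exactly the step at which your argument has a gap.

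The gap is in the final claimed bijection $\Pprim_w\Bbb C[G]\cong\Pprim\Bbb C[G]_w$. What your Nullstellensatz argument establishes is a bijection between $\Pprim\Bbb C[G]_w$ and the set of Poisson cores of those maximal ideals $N$ of $\Bbb C[G]/I_w$ satisfying $N\cap\mathcal{E}_w=\varnothing$. But $\Pprim_w\Bbb C[G]$ is by definition the set of Poisson primitive ideals $P$ of $\Bbb C[G]$ lying in $\Pspec_w\Bbb C[G]$; such a $P$ is the largest Poisson ideal inside \emph{some} maximal ideal $M$ of $\Bbb C[G]$, and the hypothesis $(P/I_w)\cap\mathcal{E}_w=\varnothing$ says nothing about $M$: a priori $(M/I_w)\cap\mathcal{E}_w\neq\varnothing$, in which case $M$ does not survive in $\Bbb C[G]_w$ and your correspondence never sees $P$. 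So you have shown that contraction embeds $\Pprim\Bbb C[G]_w$ into $\Pprim_w\Bbb C[G]$, but not that this embedding is onto. The missing step is: if $P$ is the largest Poisson ideal contained in $M$ and $P\in\Pspec_w\Bbb C[G]$, then automatically $(M/I_w)\cap\mathcal{E}_w=\varnothing$. This is where Lemma~\ref{CPRIBB} is genuinely needed: every element $e\in\mathcal{E}_w$ is a scalar times a product of elements $c_{w\Lambda}$, $\tilde c_{w\Gamma}$, hence is Poisson normal in $\Bbb C[G]/I_w$ (a product of Poisson normal elements is Poisson normal), so $e\,(\Bbb C[G]/I_w)$ is a Poisson ideal; if $e\in M/I_w$, then the inverse image of $e\,(\Bbb C[G]/I_w)$ in $\Bbb C[G]$ is a Poisson ideal contained in $M$, hence contained in $P$, giving $e\in(P/I_w)\cap\mathcal{E}_w$, a contradiction. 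With this supplement, your ``Poisson cores match Poisson cores'' step does yield the asserted bijection; without it, the equality of the two sets is unproved.
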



\section{Poisson adjoint action}

Let $A$ be a Poisson algebra. Recall the definition of Poisson $A$-module $M$ given in   \cite[Definition 1]{Oh5}:
 A vector space $M$ is said to be a Poisson $A$-module if
\begin{itemize}
\item[(i)]  $M$ is a module over the commutative algebra $A$
with module structure $$M\times A\longrightarrow M, (z,a)\mapsto za,$$
\item[(ii)]  $M$ is a  module over the Lie algebra $(A,\{\cdot,\cdot\})$
with module structure $$A\times M\longrightarrow M, (a,z)\mapsto a*z$$
\end{itemize}
such that
\begin{equation}\label{ADJA}
z\{a,b\}=a*(zb)-(a*z)b
\end{equation}  and
\begin{equation}\label{ADJB}
(ab)*z=(b*z)a+(a*z)b
\end{equation}  for all $a,b,\in A$ and $z\in M$.

\begin{defn}
Let $M$ be a Poisson module over a Poisson Hopf algebra $A=(A,\iota,m,\epsilon, \Delta,S)$. Then
{\it the Poisson adjoint action} on $M$ is defined by
$$\text{ad}_a(z)=\sum_{(a)} (a_1* z)S(a_2),\ \ \   a\in A, z\in M,$$
where  $\Delta(a)=\sum_{(a)}a_1\otimes a_2$.
\end{defn}

There exists a {\it canonical Poisson adjoint action} on $A$ given by
$$\text{ad}_a(z)=\sum_{(a)} \{a_1, z\}S(a_2),\ \ \   a, z\in A$$
since $A$ is a Poisson $A$-module with Poisson module structure such that
$a*z=\{a,z\}$ and $za$ is the multiplication in $A$ for all $a,z\in A$. Moreover the canonical
Poisson adjoint action is a derivation on $A$, that is,
\begin{equation}\label{ADJC}
\text{ad}_a(zy)=\text{ad}_a(z)y +  \text{ad}_a(y)z,\ \ \   a, z, y\in A.
\end{equation}

\begin{lem}\label{ADJD}
Let $M$ be a Poisson module over a Poisson Hopf algebra $A=(A,\iota,m,\epsilon, \Delta,S)$.

(1) $\text{ad}_a(z)=-\sum_{(a)} [S(a_2)*z]a_1$ for $a\in A$ and $z\in M$.

(2) $\text{ad}_{ab}=\epsilon(a)\text{ad}_b + \epsilon(b)\text{ad}_a$ for all $a,b\in A$.

(3) $\text{ad}_{\{a,b\}}=(\text{ad}_a)(\text{ad}_b)-(\text{ad}_b)(\text{ad}_a)$ for all $a,b\in A$.

(4) Define
$$\begin{array}{ll}
M\times A\longrightarrow M, &(z,a)\mapsto z\cdot a=\epsilon(a)z\\
A\times M\longrightarrow M, &(a,z)\mapsto a*'z=\text{ad}_a(z).
\end{array}$$
Then $(M,\cdot,*')$ is a Poisson $A$-module.
\end{lem}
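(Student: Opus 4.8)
The plan is to obtain (1), (2) and (4) by direct Hopf-algebraic bookkeeping and to concentrate the real effort on (3). Two preliminary observations are used constantly. First, setting $a=b=1$ in (\ref{ADJB}) gives $1*z=2(1*z)$, hence $1*z=0$ for every $z\in M$. Second, rearranging (\ref{ADJA}) yields the Leibniz rule $a*(mc)=(a*m)c+m\{a,c\}$ for $m\in M$ and $c\in A$, describing how the Lie action $*$ passes through a module multiplication. I also freely use the antipode axiom $\sum_{(a)}a_1S(a_2)=\epsilon(a)1$, the commutativity of $A$ (so $S$ is an algebra homomorphism and $S(a_2b_2)=S(a_2)S(b_2)$), and the fact that $\epsilon$ is a Poisson algebra homomorphism, so $\epsilon\{a,b\}=0$.

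For (1), I would apply (\ref{ADJB}) to the pair $(a_1,S(a_2))$ and sum over $(a)$: the left-hand side is $\left(\sum_{(a)}a_1S(a_2)\right)*z=\epsilon(a)(1*z)=0$, while the right-hand side is $\sum_{(a)}(a_1*z)S(a_2)+\sum_{(a)}(S(a_2)*z)a_1$, giving $\text{ad}_a(z)=-\sum_{(a)}(S(a_2)*z)a_1$. For (2), since $\Delta$ is an algebra map, $\Delta(ab)=\sum a_1b_1\otimes a_2b_2$, so expanding $\text{ad}_{ab}(z)$ by (\ref{ADJB}) and writing $S(a_2b_2)=S(a_2)S(b_2)$ produces two sums; in the first, $\sum_{(a)}a_1S(a_2)=\epsilon(a)1$ collapses it to $\epsilon(a)\text{ad}_b(z)$, and in the second, after commuting the $A$-factors, $\sum_{(b)}b_1S(b_2)=\epsilon(b)1$ collapses it to $\epsilon(b)\text{ad}_a(z)$, which is (2).

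The substance is (3). Because $\Delta$ is a Poisson homomorphism, $\Delta\{a,b\}=\sum\{a_1,b_1\}\otimes a_2b_2+\sum a_1b_1\otimes\{a_2,b_2\}$, so $\text{ad}_{\{a,b\}}(z)=\sum(\{a_1,b_1\}*z)S(a_2b_2)+\sum((a_1b_1)*z)S(\{a_2,b_2\})$. I would then expand $\text{ad}_a(\text{ad}_b(z))$ by applying the Leibniz rule to $a_1*\big((b_1*z)S(b_2)\big)$, and similarly $\text{ad}_b(\text{ad}_a(z))$, and subtract. The diagonal terms recombine via the Lie-module axiom $\{a_1,b_1\}*z=a_1*(b_1*z)-b_1*(a_1*z)$ and commutativity to reproduce $\sum(\{a_1,b_1\}*z)S(a_2b_2)$ exactly. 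The remaining cross terms are $\sum(b_1*z)\{a_1,S(b_2)\}S(a_2)-\sum(a_1*z)\{b_1,S(a_2)\}S(b_2)$; applying the identity $\sum_{(a)}\{a_1,c\}S(a_2)=-\sum_{(a)}a_1\{S(a_2),c\}$ (obtained by bracketing the antipode relation $\sum_{(a)}a_1S(a_2)=\epsilon(a)1$ with $c$ and using $\{1,c\}=0$) and then (\ref{ADJB}) rewrites them as $-\sum((a_1b_1)*z)\{S(a_2),S(b_2)\}$.

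Matching this against $\sum((a_1b_1)*z)S(\{a_2,b_2\})$ reduces (3) to the single pointwise identity $S(\{x,y\})=-\{S(x),S(y)\}$, i.e. the antipode of a commutative Poisson Hopf algebra is anti-Poisson (the Poisson counterpart of inversion being anti-Poisson on a Poisson--Lie group). This is the main obstacle, and I would dispose of it first, either by citing \cite{ChPr,KoSo} or by a convolution-uniqueness argument in $\text{Hom}(A\otimes A,A)$ comparing $S\circ\{\cdot,\cdot\}$ with $\{\cdot,\cdot\}\circ(S\otimes S)$. Granting it, the cross terms equal the second half of $\text{ad}_{\{a,b\}}(z)$ and (3) follows. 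Finally (4) is immediate from the rest: $z\cdot a=\epsilon(a)z$ makes $M$ an $A$-module because $\epsilon$ is an algebra map; the Lie-module axiom for $*'=\text{ad}$ is exactly (3); compatibility (\ref{ADJA}) holds because $\epsilon\{a,b\}=0$ and each $\text{ad}_a$ is linear; and compatibility (\ref{ADJB}) is precisely part (2).
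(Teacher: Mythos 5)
Your treatments of (1), (2) and (4) are correct and are exactly the paper's: (1) comes from applying (\ref{ADJB}) to the pair $(a_1,S(a_2))$ and using $\sum a_1S(a_2)=\epsilon(a)1$ together with $1*z=0$; (2) from $\Delta(ab)=\sum a_1b_1\otimes a_2b_2$, multiplicativity of $S$ on the commutative algebra $A$, and the antipode identities; (4) from $\epsilon(\{a,b\})=0$ (for the compatibility (\ref{ADJA})) and from (2) (for the compatibility (\ref{ADJB})), with (3) supplying the Lie-module axiom. Your reduction in (3) is also sound, and it is the paper's computation run in the opposite direction: the paper expands $\text{ad}_{\{a,b\}}$ using $\Delta(\{a,b\})=\sum\{a_1,b_1\}\otimes a_2b_2+\sum a_1b_1\otimes\{a_2,b_2\}$ together with (\ref{ADJA}), (\ref{ADJB}) and part (1), while you expand the commutator $\text{ad}_a\text{ad}_b-\text{ad}_b\text{ad}_a$ via the Leibniz rule $a*(mc)=(a*m)c+m\{a,c\}$; both computations meet at the same comparison, which holds precisely when $S(\{x,y\})=-\{S(x),S(y)\}$.

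That identity, however, is where your proposal stops being a proof, and it is not a side issue: establishing $S(\{x,y\})=-\{S(x),S(y)\}$ (the paper's (\ref{ADJE})) is the bulk of the paper's argument for (3). Deferring it to ``cite \cite{ChPr,KoSo}'' is shaky, since those sources treat the anti-Poisson property of inversion on a Poisson--Lie group rather than the antipode of an abstract commutative Poisson Hopf algebra, which is exactly the kind of foundational fact this lemma is meant to supply. The paper proves it with tools you already have on the table: bracketing the two antipode identities $\sum S(a_1)a_2=\epsilon(a)1=\sum a_1S(a_2)$ against an arbitrary element gives (\ref{ADJF}) (your identity $\sum\{a_1,c\}S(a_2)=-\sum a_1\{S(a_2),c\}$ is its mirror); then $\epsilon(\{a,b\})=0$ applied to $\sum S(\{a,b\}_1)\{a,b\}_2$, expanded by (\ref{ADJG}), yields $\sum S(\{a_1,b_1\})a_2b_2=-\sum\{S(a_1),S(b_1)\}a_2b_2$ ((\ref{ADJH})), and inserting $\epsilon(a_2)1=a_2S(a_3)$, $\epsilon(b_2)1=b_2S(b_3)$ into $S(\{a,b\})=\sum S(\{a_1,b_1\})\epsilon(a_2)\epsilon(b_2)$ finishes the job. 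Your alternative convolution route can also be made to work: both $S\circ\{\cdot,\cdot\}$ and $-\{\cdot,\cdot\}\circ(S\otimes S)$ solve $m\star X=-\{\cdot,\cdot\}\star(S\circ m)$ in $\mathrm{Hom}(A\otimes A,A)$, and the solution is unique because $m$ is convolution-invertible (with inverse $S\circ m$, by commutativity) --- but verifying that the second map solves this equation is a computation of essentially the same length as (\ref{ADJH}), so nothing is saved; as written, the actual content of (3) is left unestablished in your proposal.
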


\begin{proof}
Let $a,b\in A$ and $z\in M$.

(1) It follows immediately by the fact that
$$0=\epsilon(a)1*z=\sum(a_1S(a_2))*z=\sum(S(a_2)*z)a_1+\sum(a_1*z)S(a_2)$$
by (\ref{ADJB}).

(2) It follows immediately by the fact that
$$\begin{aligned}
\text{ad}_{ab}(z)&=\sum((a_1b_1)*z)S(a_2b_2)&&\\
&=\sum [(b_1*z)a_1+(a_1*z)b_2]S(a_2)S(b_2)&&\text{(by (\ref{ADJB}))}\\
&=\epsilon(a)\text{ad}_b(z)+\epsilon(b)\text{ad}_a(z).&&
\end{aligned}$$

(3) We show that $S$ is a Poisson anti-homomorphism, namely
\begin{equation}\label{ADJE}
S(\{a,b\})=-\{S(a),S(b)\}.
\end{equation}
Since
$$\begin{aligned}
0&=\{\epsilon(a)1,b\}=\{\sum S(a_1)a_2,b\}=\sum \{S(a_1),b\}a_2+\sum S(a_1)\{a_2,b\}\\
0&=\{a,\epsilon(b)1\}=\{a,\sum S(b_1)b_2\}=\sum \{a,S(b_1)\}b_2+\sum S(b_1)\{a,b_2\}
\end{aligned}$$
we have
\begin{equation}\label{ADJF}
\sum \{S(a_1),b\}a_2=-\sum S(a_1)\{a_2,b\}, \ \ \ \sum\{a,S(b_1)\}b_2=-\sum S(b_1)\{a,b_2\}.
\end{equation}
Note that
\begin{equation}\label{ADJG}
\Delta(\{a,b\})=\sum a_1b_1\otimes\{a_2,b_2\}+\sum\{a_1,b_1\}\otimes a_2b_2
\end{equation}
since $A$ is a Poisson Hopf algebra.
Since $\epsilon(\{a,b\})=0$ for all $a,b\in A$ by \cite[1.2]{ChOh1},
$$0=\epsilon(\{a,b\})1=\sum S(\{a,b\}_1)\{a,b\}_2=\sum S(\{a_1,b_1\})a_2b_2+\sum S(a_1b_1)\{a_2,b_2\}$$
by (\ref{ADJG}),
thus
\begin{equation}\label{ADJH}
\sum S(\{a_1,b_1\})a_2b_2=-\sum S(a_1b_1)\{a_2,b_2\}=-\{S(a_1),S(b_1)\}a_2b_2
\end{equation} by (\ref{ADJF}).
Therefore
$$\begin{aligned}
S(\{a,b\})&=S(\{\sum a_1 \epsilon(a_2), \sum b_1\epsilon( b_2)\})=\sum S(\{a_1,b_1\})\epsilon(a_2)\epsilon(b_2)&&\\
&=\sum S(\{a_1,b_1\})a_2S(a_3)b_2S(b_3)\\
&=-\sum \{S(a_1),S(b_1)\}a_2S(a_3)b_2S(b_3)&&\text{(by (\ref{ADJH}))}\\
&=-\sum\{S(a_1)\epsilon(a_2), S(b_1)\epsilon(b_2)\}=-\{S(a),S(b)\},
\end{aligned}$$
as claimed.

Now (3) follows  by the fact that
$$\begin{aligned}
\text{ad}_{\{a,b\}}(z)&=\sum(\{a,b\}_1*z)S(\{a,b\}_2)\\
&=\sum[(a_1b_1)*z]S(\{a_2,b_2\})+\sum[\{a_1,b_1\}*z]S(a_2b_2)&&\text{(by (\ref{ADJG}))}\\
&=\sum[(a_1*z)b_1+(b_1*z)a_1]\{S(b_2),S(a_2)\}&&\text{(by (\ref{ADJB}), (\ref{ADJE}))}\\
&\qquad+\sum[a_1*(b_1*z)-b_1*(a_1*z)]S(a_2)S(b_2)\\
&=\sum\left(S(b_2)*[(a_1*z)S(a_2)]-[S(b_2)*(a_1*z)]S(a_2)\right)b_1&&\text{(by (\ref{ADJA}))}\\
&\qquad-\sum(S(a_2)*[(b_1*z)S(b_2)]-[S(a_2)*(b_1*z)]S(b_2))a_1&&\text{(by (\ref{ADJA}))}\\
&\qquad+\sum[a_1*(b_1*z)-b_1*(a_1*z)]S(a_2)S(b_2)\\
&=-\text{ad}_b\text{ad}_a(z)+\text{ad}_a\text{ad}_b(z)&&\text{(by (1))}\\
&\qquad+\sum[(a_1S(a_2))*(b_1*z)]S(b_2)-\sum[(b_1S(b_2))*(a_1*z)]S(a_2)&&\text{(by (\ref{ADJB}))}\\
&=(\text{ad}_a\text{ad}_b-\text{ad}_b\text{ad}_a)(z)
\end{aligned}$$
since $\sum a_1S(a_2)=\epsilon(a)1$ and $\sum b_1S(b_2)=\epsilon(b)1$.

(4) Clearly $(M,\cdot)$ is a module over the commutative algebra $A$ and $(M,*')$ is a module
over the Lie algebra $(A,\{\cdot,\cdot\})$ by (3). Hence it is enough to prove that
$$z\cdot\{a,b\}=a*'(z\cdot b)-(a*'z)\cdot b,\ \ \ (ab)*'z=(b*'z)\cdot a+(a*'z)\cdot b.$$
The first equation follows from the fact $\epsilon(\{a,b\})=0$  by \cite[1.2]{ChOh1}
and the second equation follows from (2).
\end{proof}

\begin{thm}\label{ADJK}
 Let $A$ be a Poisson Hopf algebra and let $M$ be a Poisson $A$-module. Set
$$\begin{array}{l}
\mathcal{Z}(M)=\{z\in M\ |\ a*z=0\text{ for all }a\in A\}\\
M^{\text{ad}}=\{z\in M\ |\ \text{ad}_a(z)=0\text{ for all }a\in A\}.
\end{array}$$
Then $\mathcal{Z}(M)=M^\text{ad}.$
\end{thm}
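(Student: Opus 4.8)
The plan is to prove the two inclusions $\mathcal{Z}(M)\subseteq M^{\text{ad}}$ and $M^{\text{ad}}\subseteq\mathcal{Z}(M)$ separately. The first is immediate from the definition: if $z\in\mathcal{Z}(M)$, then $b*z=0$ for every $b\in A$, so in particular each Sweedler component satisfies $a_1*z=0$, and hence
$$\text{ad}_a(z)=\sum_{(a)}(a_1*z)S(a_2)=0$$
for all $a\in A$. Thus $z\in M^{\text{ad}}$ and no further work is needed for this direction.

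The substance lies in the reverse inclusion, and here I would establish a \emph{recovery identity} that reconstructs the Lie action $a*z$ from the Poisson adjoint action, namely
$$a*z=\sum_{(a)}\text{ad}_{a_1}(z)\,a_2\qquad(a\in A,\ z\in M),$$
where the juxtaposition on the right is the commutative $A$-module action on $M$. Granting this, the conclusion is formal: if $z\in M^{\text{ad}}$ then every term $\text{ad}_{a_1}(z)$ vanishes, so $a*z=0$ for all $a\in A$, i.e.\ $z\in\mathcal{Z}(M)$, which finishes the proof.

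To prove the recovery identity I would simply expand the right-hand side using the definition of $\text{ad}$ together with coassociativity. Writing the iterated coproduct as $\sum_{(a)}a_1\otimes a_2\otimes a_3$ and using that the $A$-module structure on $M$ is associative, one gets
$$\sum_{(a)}\text{ad}_{a_1}(z)\,a_2=\sum_{(a)}\bigl[(a_1*z)S(a_2)\bigr]a_3=\sum_{(a)}(a_1*z)\bigl(S(a_2)a_3\bigr).$$
Applying the antipode and counit axioms to the last two tensor legs gives $\sum_{(a)}a_1\otimes S(a_2)a_3=a\otimes 1$, so the displayed expression collapses to $\sum_{(a)}(a_1*z)\,\epsilon(a_2)=\bigl(\sum_{(a)}\epsilon(a_2)a_1\bigr)*z=a*z$, where I also use that the module action by $1$ is the identity. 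It is worth noting that only the Hopf axioms of $A$ and the associative, unital $A$-module structure on $M$ enter; the Poisson-module compatibilities (\ref{ADJA}) and (\ref{ADJB}) are not required for this argument.

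The main obstacle is spotting the recovery identity in the first place and then carrying out the Sweedler bookkeeping cleanly: one must be careful to pass to the iterated coproduct $\sum a_1\otimes a_2\otimes a_3$ and to apply the antipode relation $\sum S(b_1)b_2=\epsilon(b)1$ to the \emph{correct} tensor legs, since the adjoint action already consumes one coproduct of $a$ inside $\text{ad}_{a_1}$. Once the identity is in place the equivalence $\mathcal{Z}(M)=M^{\text{ad}}$ follows with essentially no additional computation.
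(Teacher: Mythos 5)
Your proof is correct and is essentially the paper's own argument: the easy inclusion $\mathcal{Z}(M)\subseteq M^{\text{ad}}$ is handled identically, and your recovery identity $a*z=\sum_{(a)}\text{ad}_{a_1}(z)\,a_2$ is exactly the chain of equalities the paper writes for the reverse inclusion, namely $a*z=\sum_{(a)}(a_1*z)\epsilon(a_2)=\sum_{(a)}(a_1*z)S(a_2)a_3=\sum_{(a)}\text{ad}_{a_1}(z)a_2$, just read in the opposite direction. Your observation that only the Hopf axioms and the unital associative module structure are needed (not the compatibilities (\ref{ADJA}), (\ref{ADJB})) is accurate and matches what the paper's computation implicitly uses.
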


\begin{proof}
If  $z\in \mathcal{Z}(M)$ then   $\text{ad}_a(z)=\sum_{(a)} (a_1*z)S(a_2)=0$ for all $a\in A$. Thus
 $\mathcal{Z}(M)\subseteq M^{\text{ad}}$.
Conversely, if $z\in M^{\text{ad}}$ then
$$a*z=\sum_{(a)} (a_1*z)\epsilon(a_2)=\sum_{(a)} (a_1*z)S(a_2)a_3=\sum_{(a)} \text{ad}_{a_1}(z)a_2=0$$
for all $a\in A$.
Thus $M^{\text{ad}}\subseteq \mathcal{Z}(M)$.
\end{proof}

 Let $\frak m$ be a Lie algebra and let $\mathcal{S}(\frak m)$ be the symmetric algebra of $\frak m$. It is well-known that  $\mathcal{S}(\frak m)$ is a Poisson Hopf algebra with
 $$\Delta(z)=z\otimes 1+1\otimes z, \ \epsilon(z)=0, \  S(z)=-z,\  \{z,y\}=[z,y]$$
for all $z,y\in\frak m$. The canonical Poisson adjoint action on $\mathcal{S}(\frak m)$ is given by
$$\text{ad}_a(z)=\sum_{(a)} \{a_1, z\}S(a_2)=\{a,z\}$$ for all $a\in \frak m$ and $z\in \mathcal{S}(\frak m)$.
Thus it is clear that $\mathcal{S}(\frak m)^{\text{ad}}=\mathcal{Z}(\mathcal{S}(\frak m))$.


\section{$H$-action and Poisson primitive ideals of $\Bbb C[G]$}

Here we find the Poisson center of $\Bbb C[G]_w$, $w\in W\times W$, by modifying the statements and proofs in \cite[\S4.2]{HoLeT} using Poisson terminologies instead of those of noncommutative algebra, and prove that $\Bbb C[G]$ satisfies the Poisson Dixmier-Moeglin equivalence.

\subsection{}
Recall, as given in \ref{TORUCS}, that $H$ is a torus associated with the Cartan subalgebra $\frak h$ and that ${\bf L}$ is the  character group of $H$.  There is an action of $H$ on $\Bbb C[G]=\bigoplus_{(\lambda,\mu)\in{\bf L}\times{\bf L}}\Bbb C[G]_{\lambda,\mu}$ by
$$h\cdot z=\mu(h)z,\ \ \ h\in H, z\in \Bbb C[G]_{\lambda,\mu}.$$

\begin{lem}\label{HACTA}
(1) For all $h\in H$ and $a,z\in \Bbb C[G]$,
$$\text{ad}_a(h\cdot z)=h\cdot \text{ad}_a(z),$$
where $\text{ad}_a$ is the canonical  Poisson adjoint action of $\Bbb C[G]$.

(2) For each $w\in W\times W$,  $\Bbb C[G]_w$ is a Poisson module over $\Bbb C[G]$ with module structure
$$(z+I_w)a=za+I_w,\ \ \ a*(z+I_w)=\{a,z\}+I_w$$
for $a,z\in\Bbb C[G]$.

(3) For each $w\in W\times W$,  there is a Poisson adjoint action on $\Bbb C[G]_w$ defined by
$$\text{ad}_a(z+I_w)=\sum_{(a)}\{a_1,z\}S(a_2)+I_w,$$
that  satisfies
 $$\text{ad}_a(h\cdot(z+I_w))=h\cdot\text{ad}_a(z+I_w)$$
 for $a,z\in\Bbb C[G]$, $h\in H$.
 \end{lem}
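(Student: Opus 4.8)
The plan is to handle the three parts in order, with (1) carrying the essential content and (2)--(3) being formal consequences of it together with the localization machinery. For (1) I would exploit the ${\bf L}$-bigrading and the fact that $H$ acts only through the \emph{second} grading index, so that $h\cdot z=\mu(h)z$ for $z\in\Bbb C[G]_{\lambda,\mu}$. By bilinearity it suffices to treat homogeneous $a\in\Bbb C[G]_{\lambda',\mu'}$ and $z\in\Bbb C[G]_{\lambda,\mu}$, and the key claim is that $\text{ad}_a(z)=\sum_{(a)}\{a_1,z\}S(a_2)$ lands in bigraded components whose second index is again $\mu$. To check this, write $\Delta(a)\in\sum_{\nu}\Bbb C[G]_{\lambda',\nu}\otimes\Bbb C[G]_{-\nu,\mu'}$ by the bigraded coproduct condition (ii); then $S(a_2)\in\Bbb C[G]_{\mu',-\nu}$ by condition (iv), while $\{a_1,z\}\in\Bbb C[G]_{\lambda'+\lambda,\,\nu+\mu}$ by condition (vi). Multiplying, each summand lies in $\Bbb C[G]_{\lambda'+\lambda+\mu',\,\mu}$, so the second index is exactly $\mu$, independent of $\nu$ and $\mu'$. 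Hence $h\cdot\text{ad}_a(z)=\mu(h)\,\text{ad}_a(z)=\text{ad}_a(\mu(h)z)=\text{ad}_a(h\cdot z)$, and (1) follows after summing over homogeneous components of $a$ and $z$.

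For (2) I would recall that $I_w$ is a Poisson ideal by Lemma~\ref{CPRIB}, so $\Bbb C[G]/I_w$ is a Poisson algebra, and that $\mathcal{E}_w$ consists of Poisson normal elements by Lemma~\ref{CPRIBB}. Localizing a Poisson algebra at a multiplicatively closed set of Poisson normal elements extends the bracket uniquely, so $\Bbb C[G]_w$ is a Poisson algebra and the composite $\iota_w\colon\Bbb C[G]\to\Bbb C[G]/I_w\hookrightarrow\Bbb C[G]_w$ is a Poisson homomorphism. Any Poisson homomorphism $\phi\colon A\to B$ makes $B$ a Poisson $A$-module via $m\cdot a=\phi(a)m$ and $a*m=\{\phi(a),m\}$; specialising $\phi=\iota_w$ and restricting to elements $m=z+I_w=\iota_w(z)$ recovers exactly the two formulas in the statement. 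The Poisson-module axioms (\ref{ADJA}) and (\ref{ADJB}) are then immediate from the Poisson-algebra identities in $\Bbb C[G]_w$ together with $\iota_w$ being a Poisson homomorphism.

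For (3), part (2) makes $\Bbb C[G]_w$ a Poisson module over the Poisson Hopf algebra $\Bbb C[G]$, so the Poisson adjoint action is defined; evaluating $\text{ad}_a(m)=\sum_{(a)}(a_1*m)S(a_2)$ at $m=z+I_w$ with the module structure of (2) yields precisely $\sum_{(a)}\{a_1,z\}S(a_2)+I_w$. Since $I_w$ is homogeneous, the $H$-action descends to $\Bbb C[G]/I_w$ and the quotient map $\pi$ is $H$-equivariant; moreover $\pi$ intertwines the two adjoint actions, that is $\pi(\text{ad}_a(z))=\text{ad}_a(\pi(z))$, directly from the formulas. The desired identity then follows by transporting (1) through $\pi$, namely $\text{ad}_a(h\cdot(z+I_w))=\pi(\text{ad}_a(h\cdot z))=\pi(h\cdot\text{ad}_a(z))=h\cdot\text{ad}_a(z+I_w)$.

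I expect the main obstacle to be the bigraded bookkeeping in (1): arranging the coproduct, antipode, and bracket conditions (ii), (iv), (vi) so that the two occurrences of $\nu$ cancel and the second index collapses to $\mu$. Once that cancellation is verified, everything else is formal, since (2) is the standard Poisson localization at normal elements followed by pullback along a Poisson homomorphism, and (3) is merely (1) pushed through the $H$-equivariant map $\pi$.
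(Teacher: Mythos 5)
Your proposal is correct and takes essentially the same route as the paper: part (1) is the identical bigrading computation (the paper runs it through the explicit coproduct $\Delta(c^M_{f,v})=\sum_i c^M_{f,v_i}\otimes c^M_{g_i,v}$ with weight-vector dual bases, you run it through the abstract axioms (ii), (iv), (vi), which encode exactly the same information), and parts (2)--(3) are the same formal steps --- descend the canonical Poisson module and adjoint action through the Poisson ideal $I_w$, extend to the localization, and use homogeneity of $I_w$ together with (1) to get $H$-equivariance. No gaps.
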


\begin{proof}
(1) For $a=c_{f,v}^M\in \Bbb C[G]_{\lambda,\mu}$,   $z\in \Bbb C[G]_{\nu,\eta}$ and $h\in H$,
we have
$$\begin{array}{l}
\text{ad}_a(h\cdot z)=\eta(h)\text{ad}_a(z)=\eta(h)\sum \{c_{f,v_i}^M,z\}S(c^M_{g_i,v})\\
h\cdot \text{ad}_a(z)=h\cdot(\sum \{c_{f,v_i}^M,z\}S(c^M_{g_i,v}))=\eta(h)\sum \{c_{f,v_i}^M,z\}S(c^M_{g_i,v})
\end{array}$$
by (\ref{J}) and Theorem~\ref{PMSTRU}  since we may assume that   the dual bases $\{v_i\}$ and $\{g_i\}$ of $M$ and $M^*$ are weight vectors for all $i$.
It follows that
$\text{ad}_a(h\cdot z)=h\cdot \text{ad}_a(z)$
for all $h\in H$ and $a,z\in \Bbb C[G]$.

(2) Since $\Bbb C[G]$ is a Poisson module with the canonical Poisson module structure and $I_w$ is a Poisson ideal by Lemma~\ref{CPRIB}, $\Bbb C[G]/I_w$ is a Poisson module. Hence the localization $\Bbb C[G]_w$ is a Poisson module.

(3)
Since $I_w$ is a Poisson ideal by Lemma~\ref{CPRIB}, the canonical Poisson adjoint action acts on $\Bbb C[G]/I_w$.
Moreover the canonical Poisson adjoint action is a derivation by (\ref{ADJC}) and thus the canonical Poisson adjoint action
extends uniquely on the localization $\Bbb C[G]_w$.

Since $I_w$ is a homogeneous Poisson ideal by Lemma~\ref{CPRIB}, $\Bbb C[G]/I_w$ is also  $\bf L$-bigraded. Note that  there exists a generating set of $I_w$ consisting of $H$-eigenvectors and each element of
$\mathcal{E}_w$ is an $H$-eigenvector. Thus
 the action of $H$ on $\Bbb C[G]$ induces an action on $\Bbb C[G]_w$, and thus  the result
follows immediately by (1).
\end{proof}

\begin{notn}
Fix $w\in W\times W$. For $\Lambda\in {\bf L}^+, f\in V(\Lambda)^*$ and $v\in V(\Lambda)$, we set
$$z_f^+=c^{-1}_{w\Lambda}(c^{V(\Lambda)}_{f,v_\Lambda}+I_w),\ \ \
z_v^-=\tilde{c}^{-1}_{w\Lambda}(c^{V(\Lambda)^*}_{v,f_{-\Lambda}}+I_w).$$
Let $\{\omega_1,\ldots,\omega_n\}$ be a basis of ${\bf L}$ such that $\omega_i\in {\bf L}^+$ for all $i$. For
each $\lambda=\sum_i s_i\omega_i\in {\bf L}$, we define Poisson normal elements of $\Bbb C[G]_w$ by
$$c_{w\lambda}=\prod^n_{i=1}c^{s_i}_{w\omega_i},\ \ \tilde{c}_{w\lambda}=\prod^n_{i=1}\tilde{c}^{s_i}_{w\omega_i},
\ \ d_\lambda=(\tilde{c}_{w\lambda}c_{w\lambda})^{-1}.$$
Note that
\begin{equation}\label{ADDD}
c_{w(\lambda+\mu)}=c_{w\lambda}c_{w\mu},\ \ \tilde{c}_{w(\lambda+\mu)}=\tilde{c}_{w\lambda}\tilde{c}_{w\mu}
\end{equation}
for any $\lambda,\mu\in{\bf L}$.

Define subalgebras of $\Bbb C[G]_w$ by
$$\begin{array}{c}
C_w=\Bbb C[z^+_f,z^-_v, c_{w\lambda}\ |\ f\in V(\Lambda)^*,v\in V(\Lambda),\Lambda\in {\bf L}^+,\lambda\in {\bf L}]\\
C_w^+=\Bbb C[z^+_f\ |\ f\in V(\Lambda)^*,\Lambda\in {\bf L}^+ ], \ \ \
C_w^-=\Bbb C[z^-_v\ |\ v\in V(\Lambda),\Lambda\in {\bf L}^+].
\end{array}$$
\end{notn}

\begin{lem}\label{HACTB}
(1) For any $\Lambda,\Gamma\in {\bf L}^+$ and $f\in V(\Lambda)^*$, there exists an element
$g\in V(\Lambda+\Gamma)^*$ such that $z_f^+=z_g^+$.

(2) For any $\Lambda,\Gamma\in {\bf L}^+$ and $v\in V(\Lambda)$, there exists an element
$u\in V(\Lambda+\Gamma)$ such that $z_v^-=z_u^-$.
\end{lem}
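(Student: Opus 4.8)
The plan is to prove both parts by the same mechanism, exploiting three facts: by (\ref{J}) a product of two matrix coefficients is a single matrix coefficient on the tensor product; a highest (resp. lowest) weight vector in such a tensor product generates a copy of the Cartan component; and a matrix coefficient $c^M_{F,V}$ only sees the cyclic $\frak d$-submodule $U(\frak d)V$. Since $\Bbb C[G]$ is commutative and $c_{w(\Lambda+\Gamma)}=c_{w\Lambda}c_{w\Gamma}$, $\tilde c_{w(\Lambda+\Gamma)}=\tilde c_{w\Lambda}\tilde c_{w\Gamma}$ by (\ref{ADDD}), each claimed equality $z_f^+=z_g^+$ and $z_v^-=z_u^-$ reduces to a single identity in $\Bbb C[G]/I_w$ after cancelling a normal element.

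For (1), I would reduce the claim to producing $g\in V(\Lambda+\Gamma)^*$ with $c_{w\Gamma}(c^{V(\Lambda)}_{f,v_\Lambda}+I_w)=c^{V(\Lambda+\Gamma)}_{g,v_{\Lambda+\Gamma}}+I_w$, after which $z_g^+=(c_{w\Lambda}c_{w\Gamma})^{-1}c_{w\Gamma}(c^{V(\Lambda)}_{f,v_\Lambda}+I_w)=z_f^+$ by commutativity. By (\ref{J}) the left side equals $c^{V(\Gamma)\otimes V(\Lambda)}_{f_{-w_+\Gamma}\otimes f,\,v_\Gamma\otimes v_\Lambda}+I_w$. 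As $v_\Gamma,v_\Lambda$ are highest weight vectors, $v_\Gamma\otimes v_\Lambda$ is killed by every $x_\alpha$, $\alpha\in\bf R^+$, hence is a highest weight vector of weight $\Lambda+\Gamma$; so $N=U(\frak d)(v_\Gamma\otimes v_\Lambda)$ is the Cartan component, isomorphic to $V(\Lambda+\Gamma)$ via $\phi$ with $\phi(v_{\Lambda+\Gamma})=v_\Gamma\otimes v_\Lambda$. (This is a $\frak d$-module isomorphism because, by (\ref{E}), the $\frak d$-structure on any object of $\mathcal C(\frak d)$ is determined by its weights together with its $\frak g$-structure.) Setting $g=(f_{-w_+\Gamma}\otimes f)\circ\phi$, the identity $g(z\,v_{\Lambda+\Gamma})=(f_{-w_+\Gamma}\otimes f)(z(v_\Gamma\otimes v_\Lambda))$ for all $z\in U(\frak d)$ is immediate from $\phi(z\,v_{\Lambda+\Gamma})=z(v_\Gamma\otimes v_\Lambda)\in N$.

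For (2), the same steps apply after replacing the modules by their duals. Here I would use that $f_{-\Gamma},f_{-\Lambda}$ are the lowest weight vectors of $V(\Gamma)^*,V(\Lambda)^*$ (dual to the highest weight vectors, of weights $-\Gamma,-\Lambda$ by Lemma~\ref{WEIG}(1)), so $f_{-\Gamma}\otimes f_{-\Lambda}$ is a lowest weight vector of weight $-(\Lambda+\Gamma)$ generating a submodule isomorphic to $V(\Lambda+\Gamma)^*$ via $\psi$ with $\psi(f_{-(\Lambda+\Gamma)})=f_{-\Gamma}\otimes f_{-\Lambda}$. Writing $\tilde c_{w\Gamma}(c^{V(\Lambda)^*}_{v,f_{-\Lambda}}+I_w)=c^{V(\Gamma)^*\otimes V(\Lambda)^*}_{v_{w_-\Gamma}\otimes v,\,f_{-\Gamma}\otimes f_{-\Lambda}}+I_w$ by (\ref{J}) and setting $u=(v_{w_-\Gamma}\otimes v)\circ\psi\in V(\Lambda+\Gamma)$, I obtain $c^{V(\Lambda+\Gamma)^*}_{u,f_{-(\Lambda+\Gamma)}}+I_w$, whence $z_v^-=z_u^-$ exactly as in (1) using (\ref{ADDD}) and commutativity.

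The main obstacle is the representation-theoretic input invoked in each part: that a highest (resp. lowest) weight vector of dominant (resp. antidominant) weight in the tensor product of two such modules generates a copy of $V(\Lambda+\Gamma)$ (resp. $V(\Lambda+\Gamma)^*$). For the highest weight case this is the standard Cartan component fact; for the lowest weight case I would either dualize that statement or apply $w_0$ to reduce to it, taking care to record the isomorphism at the level of $\frak d$-modules rather than merely $\frak g$-modules so that evaluation against elements of $U(\frak d)$ is legitimate — and this last point is automatic from (\ref{E}). Everything else is routine bookkeeping with (\ref{J}), (\ref{ADDD}) and the commutativity of $\Bbb C[G]$.
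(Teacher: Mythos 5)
Your proof is correct and takes essentially the same route as the paper: the paper's own proof of Lemma~\ref{HACTB} is just the citation ``mimic \cite[4.8]{HoLeT}'', and that cited argument is precisely your Cartan-component mechanism --- multiply by $c_{w\Gamma}$ (resp.\ $\tilde c_{w\Gamma}$), recognize the product as a matrix coefficient attached to the cyclic submodule generated by $v_\Gamma\otimes v_\Lambda$ (resp.\ $f_{-\Gamma}\otimes f_{-\Lambda}$), identify that submodule with $V(\Lambda+\Gamma)$ (resp.\ $V(\Lambda+\Gamma)^*$), and cancel via (\ref{ADDD}) and commutativity. You have in addition made explicit the point the citation leaves implicit, namely that the $\frak g$-module identification is automatically a $\frak d$-module identification because the $\frak h\oplus\frak k$-action (\ref{E}) is determined by weights.
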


\begin{proof}
It is proved by mimicking the proof of \cite[4.8]{HoLeT}.
\end{proof}

\begin{lem}\label{HACTC}
For each $w\in W\times W$, the algebras $C_w$ and $C_w^{\pm}$ are  $H$-stable Poisson subalgebras of $\Bbb C[G]_w$.
\end{lem}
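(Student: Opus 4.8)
The plan is to lean on two structural facts. First, a commutative subalgebra generated by a family of $H$-eigenvectors is automatically $H$-stable. Second, because the Poisson bracket is a biderivation, a subalgebra is a Poisson subalgebra the moment the brackets of its algebra generators fall back inside it. So the whole lemma reduces to identifying the $H$-weights of the generators $z_f^+,z_v^-,c_{w\lambda}$ and to computing the pairwise brackets $\{z_f^+,z_{f'}^+\}$, $\{z_v^-,z_{v'}^-\}$, $\{z_f^+,z_v^-\}$, $\{z_f^+,c_{w\lambda}\}$, $\{z_v^-,c_{w\lambda}\}$ and $\{c_{w\lambda},c_{w\mu}\}$, showing each lands in $C_w^+$, $C_w^-$ or $C_w$ as appropriate.

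For $H$-stability I would observe that $c^{V(\Lambda)}_{f,v_\Lambda}$ and $c_{w\Lambda}$ have the same second ${\bf L}$-degree $\Lambda$, so $z_f^+=c_{w\Lambda}^{-1}c^{V(\Lambda)}_{f,v_\Lambda}$ is in fact $H$-invariant; likewise $z_v^-$ is $H$-invariant, while $c_{w\lambda}$ is an $H$-eigenvector of weight $\lambda$. Combined with the $H$-action on $\Bbb C[G]_w$ furnished by Lemma~\ref{HACTA}(3), this yields $H$-stability of $C_w$ and of $C_w^{\pm}$ at once, and is the easy half of the statement.

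For the Poisson closure I would expand each bracket by the Leibniz rule. The factors $c_{w\Lambda}$, $\tilde c_{w\Lambda}$ and the products $c_{w\lambda}$ are Poisson normal with the explicit scalar brackets of Lemma~\ref{CPRIBB}, so $\{c_{w\Lambda}^{-1},b\}=-c_{w\Lambda}^{-2}\{c_{w\Lambda},b\}$ is a scalar multiple of $c_{w\Lambda}^{-1}b$ for homogeneous $b$; hence every contribution from differentiating a normal factor is a scalar multiple of a monomial in the generators and stays in the relevant subalgebra. This reduces each case to the bracket of the two ``numerator'' functions via formula (\ref{MPBB}), using Lemma~\ref{HACTB} to rewrite two $z$'s over a common $V(\Lambda)$ when they start in different modules. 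The crucial point is then the behaviour of the two sums in (\ref{MPBB}): in every generator the second slot is an extremal vector, $v_\Lambda$ being a highest weight vector with $x_\nu v_\Lambda=0$ for $\nu\in{\bf R}^+$ and $f_{-\Gamma}$ a lowest weight vector of $V(\Gamma)^*$ with $x_{-\nu}f_{-\Gamma}=0$. Thus in each pairing at least one factor of every summand of the family $\sum_\nu c_{f,x_\nu w}c_{g,x_{-\nu}v}$ is annihilated, so that family drops out; in the surviving family $\sum_\nu c_{fx_\nu,w}c_{gx_{-\nu},v}$ the operators act on the first slots only, leaving the extremal second slots intact. After dividing by the normal factors these surviving terms become products $z^+_{fx_\nu}z^+_{f'x_{-\nu}}$, $z^-_{Vx_\nu}z^-_{V'x_{-\nu}}$ or $z^+_{fx_\nu}z^-_{Vx_{-\nu}}$ lying in $C_w^+$, $C_w^-$, $C_w$, while the leading scalar term is a multiple of the product of the two generators. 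With $\{c_{w\lambda},c_{w\mu}\}\in\Bbb C\,c_{w(\lambda+\mu)}$ from Lemma~\ref{CPRIBB} and (\ref{ADDD}), all brackets are accounted for.

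The step I expect to be the main obstacle is the mixed bracket $\{z_f^+,z_v^-\}$: here one must track weights across the two different module types $V(\Lambda)$ and $V(\Gamma)^*$, confirm that it is precisely the highest weight annihilation $x_\nu v_\Lambda=0$ which kills the first family, and carry the normality bookkeeping for the two distinct normal elements $c_{w\Lambda}$ and $\tilde c_{w\Gamma}$ simultaneously. Once this cross term is settled, the remaining brackets are routine specialisations of the same mechanism.
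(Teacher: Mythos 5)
Your proposal is correct and follows essentially the same route as the paper's own proof: $H$-stability is read off from the ${\bf L}$-bigrading of the generators (the $z_f^+$, $z_v^-$ have second degree $0$ while $c_{w\lambda}$, $\tilde c_{w\lambda}$ are $H$-eigenvectors), and Poisson closure comes from the bracket formula (\ref{MPBB}) combined with the Poisson normality of Lemma~\ref{CPRIBB}, the common-module reduction of Lemma~\ref{HACTB}, and the observation that the extremal vectors $v_\Lambda$ and $f_{-\Lambda}$ in the second slots annihilate one of the two sums. The paper simply records the resulting closed-form brackets $\{z^+_{f_{-\lambda}},z^+_{f_{-\mu}}\}$ and $\{z^+_{f_{-\lambda}},z^-_{v_\mu}\}$, which is exactly the computation your mechanism produces.
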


\begin{proof}
Note that  $c_{w\Lambda}\in(\Bbb C[G]_w)_{-w_+\Lambda,\Lambda}$, $\widetilde{c}_{w\Lambda}\in(\Bbb C[G]_w)_{w_-\Lambda,-\Lambda}$ and thus
$$\begin{array}{cc}z_{f_{-\lambda}}^+\in(\Bbb C[G]_w)_{w_+\Lambda-\lambda,0},& z_{v_{\lambda}}^-\in(\Bbb C[G]_w)_{-w_-\Lambda+\lambda,0},\\
c_{w\lambda}\in(\Bbb C[G]_w)_{-w_+\lambda,\lambda},& \widetilde{c}_{w\lambda}\in(\Bbb C[G]_w)_{w_-\lambda,-\lambda}.
\end{array}$$
Hence
\begin{equation}\label{HACTD}
h\cdot z_f^+=z_f^+,\ \  h\cdot z_v^-=z_v^-, \ \ h\cdot c_{w\lambda}=\lambda(h)c_{w\lambda},
\ \  h\cdot \tilde{c}_{w\lambda}=\lambda(h)^{-1}\tilde{c}_{w\lambda}
\end{equation}
for all $h\in H$ and thus  the algebras $C_w$ and $C_w^{\pm}$ are $H$-stable.

Observe by (\ref{PMSTRU}) and Lemma~\ref{CPRIBB} that, for any
$\Lambda\in{\bf L}^+$, $f_{-\lambda}\in (V(\Lambda)^*)_{-\lambda}$ and $f_{-\mu}\in (V(\Lambda)^*)_{-\mu}$,
$$\begin{aligned}
\{z_{f_{-\lambda}}^+, z_{f_{-\mu}}^+\}&=az_{f_{-\lambda}}^+ z_{f_{-\mu}}^+-2\sum_{\alpha\in\bf R^+} z_{f_{-\lambda}x_\alpha}^+ z_{f_{-\mu}x_{-\alpha}}^+,
\end{aligned}$$
where $a=(\Phi_+\Lambda|\Lambda)-(\Phi_+\lambda|\mu)+(\Phi_+w_+\Lambda|\mu)-(\Phi_+w_+\Lambda| \lambda)\in\Bbb C$.
Hence $C_w^+$ is a Poisson subalgebra by Lemma~\ref{HACTB}.  Similarly, $C_w^-$ is  a Poisson subalgebra.

Observe that, for any $\Lambda\in{\bf L}^+$,
\begin{equation}\label{XXXX}
\{z^+_{f_{-\lambda}}, z^-_{v_\mu}\}=b z^+_{f_{-\lambda}} z^-_{v_\mu}+2\sum_{\alpha\in{\bf R}^+}z^+_{f_{-\lambda}x_\alpha} z^-_{x_{-\alpha}v_\mu}
\end{equation}
for some $b\in\Bbb C$ by (\ref{PMSTRU}) and Lemma~\ref{CPRIBB}. Thus $C_w$ is also a Poisson subalgebra by Lemma~\ref{HACTB}.
\end{proof}

\begin{thm}\label{HACTE}
(1) Let $C_w^H$ be the set of all fixed elements in $C_w$ under the action of $H$.
Then  $C_w^H=\Bbb C[z_f^+,z_v^-\ |\ f\in V(\Lambda)^*,v\in V(\Lambda),\Lambda\in {\bf L}^+]$, that is a Poisson subalgebra.

(2) The set $\mathcal{D}=\{d_\lambda\ |\ \lambda\in {\bf L}\}$ is a multiplicatively closed subset of $C_w^H$.
Moreover $\Bbb C[G]_w$ and $\Bbb C[G]_w^H$ are localizations of $C_w$ and $C_w^H$ at $\mathcal D$ respectively, that is,
$$\Bbb C[G]_w=(C_w)_{\mathcal{D}}, \ \ \ \Bbb C[G]_w^H=(C_w^H)_{\mathcal{D}},$$
where $\Bbb C[G]_w^H$ is the set of all fixed elements in $\Bbb C[G]_w$ under the action of $H$.

(3) For each $\lambda\in {\bf L}$, let $(\Bbb C[G]_w)_\lambda=\{a\in \Bbb C[G]_w\ |\ h\cdot a=\lambda(h)a\ \text{for all } h\in H\}$.
Then
$$\Bbb C[G]_w=\bigoplus_{\lambda\in {\bf L}}(\Bbb C[G]_w)_\lambda, \ \ \ (\Bbb C[G]_w)_\lambda=\Bbb C[G]_w^Hc_{w\lambda}.$$
Moreover each $(\Bbb C[G]_w)_\lambda$ is an invariant subspace of $\Bbb C[G]_w$ under the
Poisson adjoint action.
\end{thm}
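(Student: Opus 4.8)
The plan is to treat the three assertions in turn, working throughout with the $H$-grading on $\Bbb C[G]_w$. Recall from (\ref{HACTD}) that the generators $z_f^+$ and $z_v^-$ are $H$-invariant, that $h\cdot c_{w\lambda}=\lambda(h)c_{w\lambda}$, and that $c_{w\lambda}^{-1}=c_{w(-\lambda)}\in C_w$ by (\ref{ADDD}); thus every $c_{w\lambda}$ is already a unit of $C_w$ of $H$-weight $\lambda$. For part (1) the inclusion $\Bbb C[z_f^+,z_v^-]\subseteq C_w^H$ is immediate since those generators are $H$-fixed. For the reverse inclusion I would decompose $C_w$ by $H$-weight: using (\ref{ADDD}) every monomial in the generators collapses its $c_{w\lambda}$-factors into a single $c_{w\mu}$, so $C_w=\sum_{\mu\in{\bf L}}\Bbb C[z_f^+,z_v^-]\,c_{w\mu}$, and since the $\mu$-th summand lies in $H$-weight $\mu$ this is exactly the $H$-weight decomposition; taking the weight-$0$ part gives $C_w^H=\Bbb C[z_f^+,z_v^-]$. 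That this is a Poisson subalgebra follows either from the bracket formulas of Lemma~\ref{HACTC} (brackets of the $z^\pm$ re-express through the $z^\pm$ by Lemma~\ref{HACTB}), or abstractly because the Poisson bracket adds $H$-weights by (\ref{MPBB}), so the $H$-fixed part of the Poisson subalgebra $C_w$ is again Poisson.

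For part (2), multiplicative closedness is the identity $d_\lambda d_\mu=d_{\lambda+\mu}$, $d_0=1$, read off from (\ref{ADDD}) and commutativity. The essential point is $\mathcal D\subseteq C_w^H$: since each $d_\lambda$ is $H$-invariant, by part (1) this amounts to exhibiting $d_\lambda$ as a polynomial in the $z^\pm$. I would do this on a basis $\{\omega_i\}$ of fundamental weights and extend by $d_\lambda=\prod_i d_{\omega_i}^{s_i}$. For $\Lambda=\omega_i$ one computes $z_{f_{-\Lambda}}^+z_{v_\Lambda}^-=d_\Lambda\big(c^{V(\Lambda)}_{f_{-\Lambda},v_\Lambda}\,c^{V(\Lambda)^*}_{v_\Lambda,f_{-\Lambda}}+I_w\big)$, and, using $c^{V(\Lambda)^*}_{v_\Lambda,f_{-\Lambda}}=S(c^{V(\Lambda)}_{f_{-\Lambda},v_\Lambda})$ from (\ref{J}) together with the counit/antipode axioms and the defining relations of $I_w$, the bracketed factor reduces to a nonzero scalar; hence $d_\Lambda\in C_w^H$. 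Once $\mathcal D\subseteq C_w$ is known the localization identities are formal: from $d_\Lambda=c_{w\Lambda}^{-1}\tilde c_{w\Lambda}^{-1}$ we get $\tilde c_{w\Lambda}^{-1}=c_{w\Lambda}d_\Lambda\in C_w$ and $\tilde c_{w\Lambda}=c_{w(-\Lambda)}d_\Lambda^{-1}\in(C_w)_{\mathcal D}$, so every element of $\mathcal E_w$ is a unit of $(C_w)_{\mathcal D}$; combined with $c^{V(\Lambda)}_{f,v_\Lambda}+I_w=c_{w\Lambda}z_f^+$ and the symmetric statement for the lowest-weight coefficients $c^{V(\Lambda)}_{g,v_{w_0\Lambda}}+I_w$ (via $S$ and Theorem~\ref{MMMM}), this yields $\Bbb C[G]/I_w\subseteq(C_w)_{\mathcal D}$, hence $\Bbb C[G]_w=(C_w)_{\mathcal D}$. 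The identity $\Bbb C[G]_w^H=(C_w^H)_{\mathcal D}$ then follows by taking $H$-weight-$0$ parts, since $\mathcal D$ consists of $H$-fixed units.

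For part (3), the grading $\Bbb C[G]_w=\bigoplus_\lambda(\Bbb C[G]_w)_\lambda$ is inherited from the ${\bf L}$-bigrading of $\Bbb C[G]/I_w$ localized at the homogeneous set $\mathcal E_w$. Since $c_{w\lambda}$ is a unit of $H$-weight $\lambda$ with inverse $c_{w(-\lambda)}$, multiplication by $c_{w\lambda}$ carries $(\Bbb C[G]_w)_0=\Bbb C[G]_w^H$ isomorphically onto $(\Bbb C[G]_w)_\lambda$: for $a$ of weight $\lambda$ the element $ac_{w(-\lambda)}$ is $H$-fixed, whence $(\Bbb C[G]_w)_\lambda=\Bbb C[G]_w^Hc_{w\lambda}$. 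Finally each weight space is stable under the Poisson adjoint action because $\text{ad}_a$ commutes with the $H$-action by Lemma~\ref{HACTA}: if $h\cdot z=\lambda(h)z$ then $h\cdot\text{ad}_a(z)=\text{ad}_a(h\cdot z)=\lambda(h)\text{ad}_a(z)$, and since $\text{ad}_a$ is a derivation this propagates from the generators to all of $(\Bbb C[G]_w)_\lambda$.

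The main obstacle is the membership $\mathcal D\subseteq C_w^H$ in part (2): showing that the mixed product $z_{f_{-\Lambda}}^+z_{v_\Lambda}^-$ collapses, modulo $I_w$, to a nonzero scalar multiple of $d_\Lambda$. Everything else is either grading bookkeeping or a formal localization argument once this membership is secured; this is the one step that genuinely exploits the Hopf-algebraic relation $S(c^M_{f,v})=c^{M^*}_{v,f}$ and the precise description of $I_w$, and it is the Poisson analogue of the corresponding computation in \cite[\S4.2]{HoLeT}.
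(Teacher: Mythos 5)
Your parts (1) and (3) are fine and essentially follow the paper (the weight decomposition of $C_w$ via (\ref{ADDD}) and (\ref{HACTD}); units $c_{w\lambda}$ of pure $H$-weight together with Lemma~\ref{HACTA}(3)), and the localization bookkeeping in (2) is fine in outline. The genuine gap is exactly the step you flag as the main obstacle: your proof that $\mathcal{D}\subseteq C_w^H$ is wrong. You claim that the factor $c^{V(\Lambda)}_{f_{-\Lambda},v_\Lambda}\,c^{V(\Lambda)^*}_{v_\Lambda,f_{-\Lambda}}+I_w$ reduces to a nonzero scalar by ``the counit/antipode axioms and the defining relations of $I_w$''. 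The antipode axiom only gives $\sum_{(a)}S(a_1)a_2=\epsilon(a)1$, a sum over \emph{all} terms of $\Delta(c_{f_{-\Lambda},v_\Lambda})=\sum_i c_{f_{-\Lambda},v_i}\otimes c_{g_i,v_\Lambda}$; it never gives $S(a)a=\epsilon(a)1$ for a single matrix coefficient, and $c_{f_{-\Lambda},v_\Lambda}$ is not grouplike modulo $I_w$ in general. Concretely, take $\frak g=\frak{sl}_2$ and $w=(w_0,w_0)$: since $U(\frak b^+)V(\Lambda)_{w_0\Lambda}=V(\Lambda)=U(\frak b^-)V(\Lambda)_{\Lambda}$, both $I^+_{w_0}$ and $I^-_{w_0}$ vanish, so $I_w=0$. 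For $\Lambda=\omega$, $V(\omega)=\Bbb Ce_1\oplus\Bbb Ce_2$ with highest weight vector $e_1$, your factor is $F=c_{e_1^*,e_1}\,c^{V(\omega)^*}_{e_1,e_1^*}$, and a direct computation with the action (\ref{E}) gives $F(1)=1$ and also $F(x_{-\alpha}x_{\alpha})=1$, whereas any scalar $\kappa\cdot 1_{\Bbb C[G]}=\kappa\,\epsilon_{U(\frak d)}$ vanishes on $x_{-\alpha}x_{\alpha}$. So $F$ is not a scalar, and $z^+_{f_{-\Lambda}}z^-_{v_\Lambda}$ is not a scalar multiple of $d_\Lambda$. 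The identity that actually holds, and that the paper uses, keeps the whole dual-basis sum: applying the antipode axiom to $c_{f_{-\Lambda},v_\Lambda}$ and then multiplying by $d_\Lambda$,
$$1=\epsilon(c^{V(\Lambda)}_{f_{-\Lambda},v_\Lambda})=\sum_i c^{V(\Lambda)^*}_{v_i,f_{-\Lambda}}c^{V(\Lambda)}_{g_i,v_\Lambda}
\qquad\Longrightarrow\qquad d_\Lambda=\sum_i z^-_{v_i}z^+_{g_i}\in C_w^H.$$
In the $SL_2$ picture this is the determinant relation (sum of two products of entries), whereas your claim amounts to asserting that the product of the two diagonal entries alone is constant.

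Once the displayed identity replaces your scalar claim, the rest of your argument does go through: the elements of $\mathcal{E}_w$ become units of $(C_w)_{\mathcal{D}}$, Theorem~\ref{MMMM} gives $\Bbb C[G]_w=(C_w)_{\mathcal{D}}$, and taking $H$-weight-zero parts gives $\Bbb C[G]_w^H=(C_w^H)_{\mathcal{D}}$, which is exactly the paper's route. One further caveat, which your write-up shares with the paper: the identity produces $d_\Lambda\in C_w^H$ only for dominant $\Lambda$. For general $\lambda\in{\bf L}$ your formula $d_\lambda=\prod_i d_{\omega_i}^{s_i}$ involves negative exponents, and $d_{-\Lambda}=d_\Lambda^{-1}$ need not lie in $C_w^H$ (in the $SL_2$ example above $d_\omega=1+z^-_{e_1}z^+_{e_1^*}$, whose inverse is not a polynomial in the $z^\pm$). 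The clean fix is to localize at the multiplicative set generated by $\{d_\Lambda\mid\Lambda\in{\bf L}^+\}$, which inverts every $d_\lambda$ and yields the same ring, so all the assertions of (2) survive. But without the dual-basis identity, $\mathcal{D}\subseteq C_w^H$ --- and hence everything downstream in your part (2) --- remains unproved.
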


\begin{proof}
(1) It is proved  by (\ref{HACTD}), (\ref{XXXX}) and Lemma~\ref{HACTC}.

(2) and   (3) These are proved by  mimicking the proof of \cite[Theorem 4.7]{HoLeT} using Poisson terminologies. We repeat it for completion.
 Let $\{v_i\}$ and $\{g_i\}$ be   dual bases of $V(\Lambda)$ and $V(\Lambda)^*$. Then
$$1=\epsilon(c_{f_{-\Lambda},v_\Lambda}^{V(\Lambda)})=\sum_iS(c_{f_{-\Lambda},v_i}^{V(\Lambda)})c_{g_i,v_\Lambda}^{V(\Lambda)}
=\sum_ic_{v_i,f_{-\Lambda}}^{V(\Lambda)^*}c_{g_i,v_\Lambda}^{V(\Lambda)}.
$$
Multiplying both sides of the equation by $d_\Lambda$ yields $d_\Lambda=\sum z_{v_i}^-z_{g_i}^+$.
Thus $\mathcal{D}\subseteq C_w^H$ by (1) and  $\mathcal{D}$ is a multiplicatively closed set by Lemma~\ref{HACTB}. Now, by Theorem~\ref{MMMM}, any element of $\Bbb C[G]_w$ is a sum of elements of the form
$c_{f,v_\Lambda}^{V(\Lambda)}c_{g,v_{-\Gamma}}^{V(\Gamma)}d_\Upsilon$, where $\Lambda,\Gamma, \Upsilon\in {\bf L}^+$.
This element lies in $(\Bbb C[G]_w)_\lambda$ if and only if $\Lambda-\Gamma=\lambda$.
In this case $c_{f,v_\Lambda}^{V(\Lambda)}c_{g,v_{-\Gamma}}^{V(\Gamma)}d_\Upsilon$ is equal to the element
$z_f^+z_g^-d_\Upsilon d_\Gamma^{-1}c_{w\lambda}\in [(C_w^H)_\mathcal{D}]c_{w\lambda}$.
Since $\Bbb C[G]_w$ is invariant under the Poisson adjoint action and the Poisson adjoint action on $\Bbb C[G]_w$ commutes with the action of $H$ by Lemma~\ref{HACTA}(3),
$(\Bbb C[G]_w)_\lambda$ is
invariant under the Poisson adjoint action.
(The fact that $(\Bbb C[G]_w)_\lambda$ is
invariant under the Poisson adjoint action is also proved by Lemma~\ref{HACTK} and Lemma~\ref{CPRIBB}.)
Hence the remaining assertions follow.
\end{proof}

\subsection{}
Let $N\in{\mathcal{C}}(\frak d)$. For $v\in N$ and  $g\in N^*$, define $\psi_{v\otimes g}\in \text{End}(N)$ by
$\psi_{v\otimes g}(z)=g(z)v$ for $z\in N$. Let us show that
$$\psi:N\otimes N^*\longrightarrow \text{End}(N),\ \ v\otimes g\mapsto\psi_{v\otimes g}$$
is an isomorphism of vector spaces.
Fix  bases $\{v_i\}_{i=1}^r$ and $\{g_i\}_{i=1}^r$ of $N$ and $N^*$ such that $g_i(v_j)=\delta_{ij}$. Every element
$w\in N\otimes N^*$ can be expressed by
$w=\sum_i w_i\otimes g_i$ for some $w_i\in N$. If $\psi_w=\psi(w)=0$ then
$$0=\psi_w(v_j)=\sum_ig_i(v_j)w_i=w_j$$
for all $j$ and thus $w=0$. It follows that $\psi$ is injective and surjective since the dimension of $N\otimes N^*$
is equal to that of $\text{End}(N)$.

Since $\psi_{[\sum_i v_i\otimes g_i]}(v_j)=v_j$ for all $j$, we have
\begin{equation}\label{IDENT}
\psi_{[\sum_i v_i\otimes g_i]}=\text{id}_N,
\end{equation}
where $\text{id}_N$ is the identity map on $N$.
Denote by $\zeta$ the canonical embedding
 \begin{equation}\label{HACTF}
 \zeta:\Bbb C\longrightarrow N\otimes N^*,\ \ \zeta(1)=\sum_{i=1}^r v_i\otimes g_i.
 \end{equation}

 Note that $\Bbb C$ and $N\otimes N^*$ are $U(\frak d)$-modules with module structures
 $$\begin{aligned}
 U(\frak d)\times\Bbb C&\longrightarrow\Bbb C, &&(x, 1)\mapsto x.1:=\epsilon(x)1\\
 U(\frak d)\times (N\otimes N^*)&\longrightarrow N\otimes N^*, &&(x, a\otimes b)\mapsto x.(a\otimes b):=\Delta(x)(a\otimes b)
 \end{aligned}$$
 since the counit map $\epsilon$ and the comultiplication map $\Delta$ in $U(\frak d)$ are algebra homomorphisms.

 \begin{lem}\label{HACTG}
 The map $\zeta$ is a homomorphism of $U(\frak d)$-modules.
 \end{lem}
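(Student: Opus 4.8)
The plan is to verify the single identity $\zeta(x.1)=x.\zeta(1)$ for all $x\in U(\frak d)$; since $\zeta$ is $\Bbb C$-linear and $U(\frak d)$ is generated as an algebra by $\frak d$ together with $1$, it suffices to check it on $\frak d$ and then extend multiplicatively. For the extension I would observe that, because $\epsilon$ and $\Delta$ are algebra homomorphisms, the set $S=\{x\in U(\frak d)\mid \zeta(x.1)=x.\zeta(1)\}$ is a unital subalgebra: it clearly contains $1$ and is a subspace, and if $x,y\in S$ then, using $y.1=\epsilon(y)1$ and $y.\zeta(1)=\zeta(y.1)=\epsilon(y)\zeta(1)$, one gets
$$(xy).\zeta(1)=x.(y.\zeta(1))=\epsilon(y)\,x.\zeta(1)=\epsilon(xy)\zeta(1)=\zeta((xy).1).$$
Thus it is enough to prove the identity for generators $x\in\frak d$.

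For such $x$ the left-hand side is immediate: $\epsilon(x)=0$, so $x.1=\epsilon(x)1=0$ and hence $\zeta(x.1)=0$. The substance of the proof is to show $x.\zeta(1)=0$ for $x\in\frak d$, and here I would transport the computation through the isomorphism $\psi$ from (\ref{HACTF}). The key claim is that $\psi$ intertwines the $U(\frak d)$-action on $N\otimes N^*$ with the commutator action on $\text{End}(N)$: for primitive $x\in\frak d$ and any $v\in N$, $g\in N^*$,
$$\psi_{x.(v\otimes g)}=x\circ\psi_{v\otimes g}-\psi_{v\otimes g}\circ x,$$
where on the right $x$ denotes the operator $z\mapsto xz$ on $N$. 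To check this I would expand $x.(v\otimes g)=(xv)\otimes g+v\otimes(xg)$ using $\Delta(x)=x\otimes1+1\otimes x$, evaluate on $z\in N$, and apply the contragredient action $(xg)(z)=-g(xz)$, obtaining $\psi_{x.(v\otimes g)}(z)=g(z)(xv)-g(xz)v=x\big(\psi_{v\otimes g}(z)\big)-\psi_{v\otimes g}(xz)$, as claimed. Applying this term by term to $\zeta(1)=\sum_i v_i\otimes g_i$ and invoking (\ref{IDENT}), which says $\sum_i\psi_{v_i\otimes g_i}=\text{id}_N$, gives $\psi_{x.\zeta(1)}=x\circ\text{id}_N-\text{id}_N\circ x=0$, whence $x.\zeta(1)=0$ by injectivity of $\psi$. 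Combined with the previous paragraph this yields $\zeta(x.1)=0=x.\zeta(1)$ on all of $\frak d$, and therefore on $U(\frak d)$.

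The only genuinely computational point, and the step I expect to be the main obstacle, is the intertwining identity for $\psi$; everything else is formal bookkeeping. If one prefers to avoid $\psi$ altogether, the same conclusion follows by a direct cancellation: writing $xv_i=\sum_j c_{ji}v_j$ forces $xg_i=-\sum_j c_{ij}g_j$ via the contragredient action, and substituting into
$$x.\zeta(1)=\sum_i\big[(xv_i)\otimes g_i+v_i\otimes(xg_i)\big]=\sum_{i,j}c_{ji}\,v_j\otimes g_i-\sum_{i,j}c_{ij}\,v_i\otimes g_j$$
makes the two double sums cancel after relabelling, so that $x.\zeta(1)=0$ directly.
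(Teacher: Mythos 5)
Your proof is correct, but it takes a somewhat different route from the paper's. The paper proves the identity for an arbitrary $z\in U(\frak d)$ in one stroke: it first shows that $\psi_{[\sum_i(xv_i)\otimes(yg_i)]}=xS(y)$ as operators on $N$ (this encodes how the dual action works via the antipode), and then computes in Sweedler notation $\psi_{z.\zeta(1)}=\sum_{(z)}z_1S(z_2)\,\mathrm{id}_N=\epsilon(z)\,\mathrm{id}_N$, concluding $z.\zeta(1)=\epsilon(z)\zeta(1)=\zeta(z.1)$ by injectivity of $\psi$ --- in effect, the lemma is the antipode axiom $\sum_{(z)}z_1S(z_2)=\epsilon(z)1$ read through $\psi$. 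You instead reduce to Lie algebra generators: the set of $x$ with $x.\zeta(1)=\epsilon(x)\zeta(1)$ is a unital subalgebra (your verification is sound, and it correctly relies on the action being associative because $\Delta$ and $\epsilon$ are algebra maps), so it suffices to treat primitive $x\in\frak d$, where $\Delta(x)=x\otimes1+1\otimes x$ and $\epsilon(x)=0$; your intertwining identity $\psi_{x.(v\otimes g)}=x\circ\psi_{v\otimes g}-\psi_{v\otimes g}\circ x$, or the direct matrix-coefficient cancellation, then gives $x.\zeta(1)=0$. Note that your generator computation is precisely the primitive-element case of the paper's formula, since for $x\in\frak d$ one has $\sum x_1S(x_2)=xS(1)+1S(x)=x-x=0=\epsilon(x)$. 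What your route buys is elementarity: no Sweedler calculus and no appeal to the antipode identity for general elements of $U(\frak d)$ (and your second variant dispenses with $\psi$ altogether). What it costs is the extra generation/subalgebra step, which the paper's uniform computation renders unnecessary.
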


\begin{proof}
Let  $x,y\in U(\frak d)$, $v\in N$ and  let $S(y)v=\sum_{k=1}^r a_kv_k$, $a_k\in\Bbb C$.
Then
$$\psi_{[\sum_i(xv_i)\otimes(yg_i)]}(v)=\sum_ig_i(S(y)v)xv_i=\sum_{i,k}a_kg_i(v_k)xv_i=x\sum_ka_kv_k=xS(y)v.$$
Hence, for $z\in U(\frak d)$,
$$\begin{aligned}
\psi_{z\zeta(1)}=\psi_{z\sum_{i}v_i\otimes g_i}&=\psi_{[\sum_{i,(z)}(z_1v_i)\otimes(z_2g_i)]}\\
&=\sum_{(z)}z_1S(z_2)\text{id}_N=\epsilon(z)\text{id}_N\\
&=\epsilon(z)\psi_{[\sum_iv_i\otimes g_i]}=\psi_{\epsilon(z)\zeta(1)}
\end{aligned}$$
by (\ref{IDENT}) and (\ref{HACTF}).
Since $\psi$ is an isomorphism, we have that
$z\zeta(1)=\epsilon(z)\zeta(1)=\zeta(\epsilon(z)1)$
for every $z\in U(\frak d)$.
\end{proof}

\begin{lem}\label{HACTH}
Let  $ N\in{\mathcal{C}}(\frak d)$,
$g\in N^*$, $v\in N$. Let $\{v_i\}_{i=1}^r$ and $\{g_i\}_{i=1}^r$ be bases of $N$ and $N^*$ such that $g_i(v_j)=\delta_{ij}$.
Then, for any $c_{f,p}=c_{f,p}^N\in\Bbb C[G]$,
$$c_{(1\otimes\zeta)^*(f\otimes g\otimes v),p}=a c_{f,p}$$ where $a=\sum_{i}g(v_i)g_i(v)\in\Bbb C$.
\end{lem}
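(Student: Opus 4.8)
The plan is to compute the covector $(1\otimes\zeta)^*(f\otimes g\otimes v)\in N^*$ explicitly and then appeal to the linearity of $c_{\cdot,p}$ in its first slot. Here $1$ denotes the identity on $N$, so that $1\otimes\zeta\colon N\to N\otimes N\otimes N^*$, $q\mapsto q\otimes\zeta(1)$, and its transpose $(1\otimes\zeta)^*\colon N^*\otimes N^*\otimes N\to N^*$ is determined by
\[
\big[(1\otimes\zeta)^*(f\otimes g\otimes v)\big](q)=(f\otimes g\otimes v)\big((1\otimes\zeta)(q)\big)\qquad(q\in N),
\]
where $f\otimes g\otimes v$ acts on $a\otimes b\otimes c\in N\otimes N\otimes N^*$ by $f(a)\,g(b)\,c(v)$. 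All spaces here are finite dimensional, since $N\in\mathcal{C}(\frak d)$, so these dualities are canonical.

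First I would substitute $\zeta(1)=\sum_{i=1}^r v_i\otimes g_i$ from (\ref{HACTF}). For every $q\in N$ this gives
\[
\big[(1\otimes\zeta)^*(f\otimes g\otimes v)\big](q)=\sum_{i=1}^r f(q)\,g(v_i)\,g_i(v)=\Big(\sum_{i=1}^r g(v_i)\,g_i(v)\Big)f(q)=a\,f(q),
\]
so that $(1\otimes\zeta)^*(f\otimes g\otimes v)=a\,f$ as an element of $N^*$.

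To conclude, recall that $c_{h,p}(z)=h(zp)$ is linear in $h$, so $h\mapsto c_{h,p}$ is a linear map $N^*\to\Bbb C[G]$. Applying it to $a\,f$ yields
\[
c_{(1\otimes\zeta)^*(f\otimes g\otimes v),\,p}=c_{af,p}=a\,c_{f,p},
\]
which is the assertion. The only real difficulty is the bookkeeping in the first paragraph, namely fixing which factor is dualized and how $f\otimes g\otimes v$ pairs with $N\otimes N\otimes N^*$; once that convention is pinned down, the result is immediate from (\ref{HACTF}) and linearity, and in particular the $U(\frak d)$-module homomorphism property of $\zeta$ established in Lemma~\ref{HACTG} is not needed. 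As a consistency check one may note that $a=\sum_i g(v_i)g_i(v)=g(v)$, since $\sum_i g_i(v)\,v_i=v$ by (\ref{IDENT}).
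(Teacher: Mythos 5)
Your proof is correct and is essentially the paper's own argument: both substitute $\zeta(1)=\sum_i v_i\otimes g_i$ and evaluate the pairing to extract the scalar $a=\sum_i g(v_i)g_i(v)$, the only cosmetic difference being that you first identify $(1\otimes\zeta)^*(f\otimes g\otimes v)=af$ as an element of $N^*$ and then invoke linearity of $c_{\cdot,p}$, while the paper evaluates $c_{(1\otimes\zeta)^*(f\otimes g\otimes v),p}(z)$ directly at $z\in U(\frak d)$. Your added observations — that Lemma~\ref{HACTG} is not needed here, and that $a=g(v)$ by (\ref{IDENT}) — are both correct.
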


\begin{proof}
For any $z\in U(\frak d)$,
$$\begin{aligned}
c_{(1\otimes\zeta)^*(f\otimes g\otimes v),p}(z)
&=(1\otimes\zeta)^*(f\otimes g\otimes v)(zp)=(f\otimes g\otimes v)(1\otimes\zeta)(zp)\\
&=(f\otimes g\otimes v)(\sum_i zp\otimes v_i\otimes g_i)\\
&=\sum_i f(zp)g(v_i)g_i(v)=ac_{f,p}(z).
\end{aligned}
$$
 Hence $c_{(1\otimes\zeta)^*(f\otimes g\otimes v),p}=a c_{f,p}$.
\end{proof}

A $\Bbb C[G]$-module $M$ is said to be {\it locally closed} if, for each $z\in M$, $\Bbb C[G]z$ is finite dimensional.

\begin{lem}\label{HACTK}
Let $\Lambda\in{\bf L}^+$, $f\in (V(\Lambda)^*)_{-\lambda}$, $p\in V(\Lambda)_{\lambda}$,  $N\in{\mathcal{C}}(\frak d)$.
Let $\{v_i\}_{i=1}^r$ and $\{g_i\}_{i=1}^r$ be bases of $N$ and $N^*$ such that $g_i(v_j)=\delta_{ij}$.
Then, for $c=c_{g,v}^N\in\Bbb C[G]_{-\eta,\gamma}$,

(1)  $\text{ad}_c(z_f^+)=a_0z_{f}^+ +\sum_{\alpha\in\bf R^+} a_\alpha z_{fx_\alpha}^+$, where
$$\begin{aligned}
a_0&=[(\Phi_+\lambda|\eta)-(\Phi_+w_+\Lambda|\eta)]\sum_ig(v_i)g_i(v),\\
 a_\alpha&=2\sum_i(gx_{-\alpha})(v_i)g_i(v).
  \end{aligned}$$
  In particular, if $\eta\neq\gamma$ then $a_0=0$ and if $\eta=\gamma$ then $a_\alpha=0$ for all $\alpha\in\bf R^+$.

(2)  $\text{ad}_c(z_p^-)=b_0z_{p}^- + \sum_{\alpha\in{\bf R}^+} b_\alpha z_{x_{-\alpha}p}^-$, where
$$\begin{aligned}
b_0&=[(\Phi_-w_-\Lambda|\eta)-(\Phi_-\lambda|\eta)]\sum_ig(v_i)g_i(v),\\
 b_\alpha&=2\sum_i(gx_\alpha)(v_i)g_i(v).
  \end{aligned}$$
 In particular, if $\eta\neq\gamma$ then $b_0=0$ and if $\eta=\gamma$ then $b_\alpha=0$ for all $\alpha\in\bf R^+$.

(3) If $a_\alpha\neq 0$ for some $\alpha\in\bf R^+$, then $\text{ad}_c(z_p^-)=0$. Conversely, if  $b_\alpha\neq 0$ for some $\alpha\in\bf R^+$, then $\text{ad}_c(z_f^+)=0$.
In particular, if $\eta>\gamma$ then $\text{ad}_c(z_f^+)=0$ and if $\eta<\gamma$ then $\text{ad}_c(z_p^-)=0$.

(4) $\text{ad}_c(C_w^+)\subseteq C_w^+$, $\text{ad}_c(C_w^-)\subseteq C_w^-$,
 $\text{ad}_c(C_w^H)\subseteq C_w^H$ for any $c\in \Bbb C[G]$ and
the Poisson adjoint actions on $C_w^{\pm}$ and $C_w^H$ are locally finite.
\end{lem}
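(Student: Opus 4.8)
The plan is to prove (1) and (2) by direct computation and then deduce (3) and (4) formally.

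First I would reduce to the homogeneous case. Since the coproduct, the antipode and the Poisson bracket are all linear, the assignment $c\mapsto\text{ad}_c$ is linear, so it suffices to treat $c=c_{g,v}^N$ with $g,v$ weight vectors, i.e. $c\in\Bbb C[G]_{-\eta,\gamma}$. Writing $\Delta(c_{g,v}^N)=\sum_ic_{g,v_i}^N\otimes c_{g_i,v}^N$ and $S(c_{g_i,v}^N)=c_{v,g_i}^{N^*}$ by (\ref{J}), the Poisson adjoint action on $\Bbb C[G]_w$ of Lemma~\ref{HACTA}(3) becomes $\text{ad}_c(z_f^+)=\sum_i\{c_{g,v_i}^N,z_f^+\}c_{v,g_i}^{N^*}$, and similarly for $z_p^-$. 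This is the identity I would expand.

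For (1) I would write $z_f^+=c_{w\Lambda}^{-1}(c_{f,v_\Lambda}^{V(\Lambda)}+I_w)$ and use that $\{c_{g,v_i},\cdot\}$ is a derivation. The Poisson normality of $c_{w\Lambda}$ (Lemma~\ref{CPRIBB}(1)) turns $\{c_{g,v_i},c_{w\Lambda}^{-1}\}$ into a scalar multiple of $c_{g,v_i}c_{w\Lambda}^{-1}$, so the localized factor $c_{w\Lambda}^{-1}$ passes through the bracket and recombines to reproduce $z_f^+$ and $z_{fx_\alpha}^+$. Applying the Poisson bracket (\ref{MPBB}) to $\{c_{g,v_i},c_{f,v_\Lambda}\}$ produces a diagonal term and, for each $\nu\in{\bf R}^+$, off-diagonal terms carrying $c_{g,x_\nu v_i}$, $c_{f,x_{-\nu}v_\Lambda}$, $c_{gx_\nu,v_i}$ and $c_{fx_{-\nu},v_\Lambda}$. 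Summing over $i$ contracts the outer $N\otimes N^*$ factors against $\zeta(1)=\sum_iv_i\otimes g_i$; since $\zeta$ is a $U(\frak d)$-module homomorphism (Lemma~\ref{HACTG}), the relation $x_\nu\cdot\zeta(1)=\epsilon(x_\nu)\zeta(1)=0$ lets me trade the left action of $x_{-\nu}$ on $v_\Lambda$ for the right action $f\mapsto fx_\nu$, and Lemma~\ref{HACTH} evaluates the remaining contractions as the scalars $\sum_ig(v_i)g_i(v)$ and $\sum_i(gx_{-\alpha})(v_i)g_i(v)$. Collecting terms gives $\text{ad}_c(z_f^+)=a_0z_f^++\sum_{\alpha}a_\alpha z_{fx_\alpha}^+$ with the stated coefficients; part (2) is the same computation with $\tilde{c}_{w\Lambda}$ and Lemma~\ref{CPRIBB}(2), so that $\Phi_-$ replaces $\Phi_+$. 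The two ``in particular'' assertions are then immediate from Lemma~\ref{WEIG}(1): $g(v)\ne0$ forces $\eta=\gamma$, whereas $(gx_{-\alpha})(v)=g(x_{-\alpha}v)\ne0$ forces $\gamma-\eta=\alpha\in{\bf R}^+$, so $a_0$ and the $a_\alpha$ cannot both be nonzero.

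Part (3) is then a pure weight argument: if some $a_\alpha\ne0$ then $\gamma-\eta\in{\bf R}^+$, so $\eta\ne\gamma$ (whence $b_0=0$ by (2)), while the opposite inequality $\eta-\gamma\in{\bf R}^+$ needed for $b_\alpha\ne0$ is impossible, giving $\text{ad}_c(z_p^-)=0$; the converse statement and the cases $\eta>\gamma$, $\eta<\gamma$ follow symmetrically. For (4), formula (1) shows that for fixed $\Lambda$ the finite-dimensional space spanned by $\{z_{f'}^+\mid f'\in V(\Lambda)^*\}$ is stable under every $\text{ad}_c$ (because $fx_\alpha\in V(\Lambda)^*$), so each generator $z_f^+$ of $C_w^+$ is a locally finite vector; since $\text{ad}_c$ is a derivation extending the canonical derivation (\ref{ADJC}) and $c\mapsto\text{ad}_c$ is a Lie homomorphism (Lemma~\ref{ADJD}(3)), the locally finite vectors form a subalgebra, whence $\text{ad}_c(C_w^+)\subseteq C_w^+$ and the action on $C_w^+$ is locally finite; the same holds for $C_w^-$ by (2). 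Finally $C_w^H=\Bbb C[z_f^+,z_v^-]$ by Theorem~\ref{HACTE}(1) contains $C_w^\pm$, and $\text{ad}_c$ carries the generators $z_f^+,z_v^-$ into $C_w^+,C_w^-\subseteq C_w^H$; being a derivation it preserves $C_w^H$ and acts locally finitely there.

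The main obstacle is the bookkeeping inside (1) and (2): one must show that the weight-dependent diagonal coefficients — the $(\Phi_+\nu_i|\Lambda)$-weighted contractions, which individually do \emph{not} reduce to a scalar multiple of the identity — combine with the off-diagonal terms so that everything except $a_0z_f^+$ and $\sum_\alpha a_\alpha z_{fx_\alpha}^+$ cancels, in particular the spurious contributions carrying the lowered vector $x_{-\nu}v_\Lambda$ and those produced by differentiating $c_{w\Lambda}^{-1}$. Precisely this cancellation is what the equivariance of $\zeta$, the identity $x\cdot\zeta(1)=0$, and the highest-weight property of $v_\Lambda$ are designed to supply.
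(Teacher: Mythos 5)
Your outline reproduces the paper's proof almost step for step: the same starting expansion $\text{ad}_c(z_f^+)=\sum_i\{c_{g,v_i},c_{w\Lambda}^{-1}c_{f,v_\Lambda}\}c_{v,g_i}$, the same use of Poisson normality (Lemma~\ref{CPRIBB}) to handle the localized factor $c_{w\Lambda}^{-1}$, the same contraction of $\sum_i(\cdot)\,c_{v,g_i}$ via the equivariance of $\zeta$ (Lemma~\ref{HACTG}) followed by Lemma~\ref{HACTH}, and the same weight arguments (Lemma~\ref{WEIG}) for the ``in particular'' claims, for (3), and for (4) (where your argument is a more detailed version of the paper's one-line appeal to finite-dimensionality of objects of $\mathcal{C}(\frak d)$ and Theorem~\ref{HACTE}). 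The one point where you deviate is exactly what creates the ``main obstacle'' you flag at the end, and that obstacle is self-inflicted and, as written, unresolved. You expand $\{c_{g,v_i},c_{f,v_\Lambda}\}$ with $c_{g,v_i}$ in the first slot of (\ref{MPBB}). In that ordering the off-diagonal terms carry $c_{f,x_{-\nu}v_\Lambda}$, $c_{fx_{-\nu},v_\Lambda}$, $c_{g,x_\nu v_i}$, none of which vanish, and the diagonal coefficient $(\Phi_+\gamma_i|\Lambda)-(\Phi_+\eta|\lambda)$ (with $\gamma_i$ the weight of $v_i$) does \emph{not} cancel the $i$-dependence of the normality coefficient $(\Phi_+\Lambda|\gamma_i)-(\Phi_+w_+\Lambda|\eta)$ coming from $\{c_{g,v_i},c_{w\Lambda}^{-1}\}$: since $u$ is skew, the two $i$-dependent parts add up to $2(\Lambda|\gamma_i)$ instead of cancelling. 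So after your expansion there remains, for each $i$, a leftover $2[(\Lambda|\gamma_i)-(\eta|\lambda)]c_{f,v_\Lambda}c_{g,v_i}$ plus the three families of spurious off-diagonal terms, and the claim that ``the equivariance of $\zeta$ and $x\cdot\zeta(1)=0$'' make them cancel is asserted, not proved; the $\zeta$-contraction identities act only after summing over $i$, while the needed cancellation is a per-$i$ identity.

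The paper's ordering eliminates this entirely: write $\{c_{g,v_i},c_{w\Lambda}^{-1}c_{f,v_\Lambda}\}=-c_{w\Lambda}^{-1}\{c_{f,v_\Lambda},c_{g,v_i}\}+c_{w\Lambda}^{-2}c_{f,v_\Lambda}\{c_{w\Lambda},c_{g,v_i}\}$ and expand $\{c_{f,v_\Lambda},c_{g,v_i}\}$ with $c_{f,v_\Lambda}$ first. Then the highest-weight property $x_\nu v_\Lambda=0$ kills every term with a lowered $v_\Lambda$ at once, only the terms $c_{fx_\nu,v_\Lambda}c_{gx_{-\nu},v_i}$ survive, and the diagonal coefficient $-[(\Phi_+\Lambda|\gamma_i)-(\Phi_+\lambda|\eta)]$ cancels the $i$-dependent part of the normality coefficient exactly, leaving the constant $(\Phi_+\lambda|\eta)-(\Phi_+w_+\Lambda|\eta)$; after that the $\zeta$/Lemma~\ref{HACTH} contraction requires no residual cancellation. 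Concretely: the missing step in your write-up is supplied either by switching to this ordering (antisymmetry of the bracket, which is what the paper does), or by verifying your cancellation per $i$, which is precisely the antisymmetry identity $\{c_{f,v_\Lambda},c_{g,v_i}\}+\{c_{g,v_i},c_{f,v_\Lambda}\}=0$ written out in (\ref{MPBB}). With that one-line change your argument for (1), and hence (2)--(4), becomes the paper's proof.
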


\begin{proof}
For convenience, we will write $z$ for $z+I_w\in\Bbb C[G]_w$. We may assume that all $v_i$ and $g_i$ are weight vectors with
weights $\gamma_i$ and $-\gamma_i$ respectively.

(1)
We have
$$\begin{aligned}
\text{ad}_c(z_f^+)&=\sum_i \{c_{g,v_i}, c_{w\Lambda}^{-1}c_{f,v_\Lambda}\}S(c_{g_i,v})\\
&=\sum_i(-c_{w\Lambda}^{-1}\{c_{f,v_\Lambda},c_{g,v_i}\}+c_{w\Lambda}^{-2}c_{f,v_\Lambda}\{c_{w\Lambda},c_{g,v_i}\})c_{v,g_i}\\
&=\sum_i[(\Phi_+\lambda|\eta)-(\Phi_+w_+\Lambda|\eta)]c_{w\Lambda}^{-1}c_{f\otimes g\otimes v, v_\Lambda\otimes v_i\otimes g_i}
+2\sum_{\alpha\in{\bf R}^+, i}c_{w\Lambda}^{-1}c_{fx_\alpha\otimes gx_{-\alpha}\otimes v, v_\Lambda\otimes v_i\otimes g_i}\\
&\qquad\qquad(\text{by (\ref{MPBB}) and Lemma~\ref{CPRIBB})}\\
&=[(\Phi_+\lambda|\eta)-(\Phi_+w_+\Lambda|\eta)]c_{w\Lambda}^{-1}c_{f\otimes g\otimes v, (1\otimes\zeta)(v_\Lambda)}
+2\sum_{\alpha\in\bf R^+}c_{w\Lambda}^{-1}c_{fx_\alpha\otimes gx_{-\alpha}\otimes v, (1\otimes\zeta)(v_\Lambda)}\\
&\qquad\qquad(\text{by (\ref{HACTF}))}\\
&=[(\Phi_+\lambda|\eta)-(\Phi_+w_+\Lambda|\eta)]c_{w\Lambda}^{-1}c_{(1\otimes\zeta)^*(f\otimes g\otimes v), v_\Lambda}
+2\sum_{\alpha\in\bf R^+}c_{w\Lambda}^{-1}c_{(1\otimes\zeta)^*(fx_\alpha\otimes gx_{-\alpha}\otimes v), v_\Lambda}\\
&\qquad\qquad(\text{by Lemma~\ref{HACTG})}\\
&=a_0z_f^++\sum_{\alpha\in\bf R^+}a_\alpha z_{fx_\alpha}^+. \ \ \ (\text{by Lemma~\ref{HACTH}})
\end{aligned}$$
 If $\eta\neq\gamma$ then $g(v_i)g_i(v)=0$ for each $1\leq i\leq r$ by Lemma~\ref{WEIG}, thus  $a_0=0$.
 If $\eta=\gamma$ then $gx_{-\alpha}(v_i)g_i(v)=0$ for each $1\leq i\leq r$ by Lemma~\ref{WEIG}, thus $a_\alpha=0$ for each $\alpha\in{\bf R}^+$.

(2) Similar to (1).

(3)
Let $z$ be a weight vector with weight $\mu$. Then we  write $\mu=\text{wt}(z)$.
Observe that
\begin{equation}\label{HACTL}
\begin{aligned}
(gx_{-\alpha})(v_i)g_i(v)\neq0&\Rightarrow \text{wt}(gx_{-\alpha})=-\text{wt}(v_i), \text{wt}(g_i)=-\text{wt}(v)\\
&\Rightarrow
\eta+\alpha=\gamma
\end{aligned}\end{equation}
and
\begin{equation}\label{HACTM}
\begin{aligned}
(gx_\alpha)(v_i)g_i(v)\neq0&\Rightarrow \text{wt}(gx_\alpha)=-\text{wt}(v_i), \text{wt}(g_i)=-\text{wt}(v)\\
&\Rightarrow
\eta-\alpha=\gamma
\end{aligned}\end{equation}
by Lemma~\ref{WEIG}.

If $a_\alpha\neq 0$ for some $\alpha\in\bf R^+$ then $gx_{-\alpha}(v_i)g_i(v)\neq0$ for some $i$. Thus
$\eta\neq\gamma$ and $(gx_\alpha)(v_i)g_i(v)=0$ for all $i$ and all $\alpha\in{\bf R}^+$ by (\ref{HACTL}) and (\ref{HACTM}).  Hence $b_0=0$ and $b_\alpha=0$  for all $\alpha\in{\bf R}^+$ by (2), and thus $\text{ad}_c(z_p^-)=0$. In particular, if $\eta>\gamma$ then $\text{ad}_c(z_f^+)=0$ by (\ref{HACTL}).

Similarly, if  $b_\alpha\neq 0$ for some $\alpha\in\bf R^+$, then $\text{ad}_c(z_f^+)=0$, and if $\eta<\gamma$ then $\text{ad}_c(z_p^-)=0$ by (\ref{HACTM}).

(4)
It is  clear by (1), (2) and Theorem~\ref{HACTE} since every object of $\mathcal{C}(\frak d)$  is finite dimensional.
\end{proof}

\subsection{}
Henceforth we denote by $U$ the Poisson enveloping algebra of $\Bbb C[G]$.
(See \cite[Definition 3]{Oh5} for the definition of Poisson enveloping algebra.)
Note that  $U$ is an associative algebra. Since  $\Bbb C[G]_w$ is
a Poisson module over $\Bbb C[G]$ by Lemma~\ref{HACTA}(2), $C_w^+$ and $C_w^-$ are Poisson modules over $\Bbb C[G]$ with module structure
$$za=\epsilon(a)z,\ \ \ a*z=\text{ad}_a z$$
for $a\in\Bbb C[G], z\in C_w^\pm$, by Lemma~\ref{ADJD}(4) and Lemma~\ref{HACTK}(1), (2). Thus $C_w^\pm$ are $U$-modules by \cite[Corollary 6]{Oh5}.
Recall that the socle  of a module $M$,
denoted by $\text{Soc}(M)$,  is the sum of all minimal submodules of $M$.

\begin{lem}\label{HACTN}
$\text{Soc}(C_w^+)=\Bbb C$ and $\text{Soc}(C_w^-)=\Bbb C$.
\end{lem}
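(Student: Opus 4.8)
The plan is to show that the Poisson adjoint action turns $C_w^+$ into a locally finite $U$-module (Lemma~\ref{HACTK}(4)) whose only minimal submodules are the scalars. First I would record that $\Bbb C=\Bbb C\cdot1$ is a submodule: for every $a\in\Bbb C[G]$ we have $\text{ad}_a(1)=\sum_{(a)}\{a_1,1\}S(a_2)=0$ because $\{a_1,1\}=0$, so $\Bbb C\cdot1$ is one-dimensional, hence a simple submodule, giving $\Bbb C\subseteq\text{Soc}(C_w^+)$. For the reverse inclusion I would exploit the grading. By Lemma~\ref{HACTC} every $z_f^+$ has trivial second bidegree, so the first component of the ${\bf L}$-bigrading restricts to a grading $C_w^+=\bigoplus_d(C_w^+)_d$. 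Since $z_f^+\ne0$ forces $f$ not to vanish on the cyclic module $U(\frak n^+)v_{w_+\Lambda}$, whose weights all lie in $w_+\Lambda+\sum_i\Bbb Z_{\ge0}\alpha_i$, every nonzero homogeneous element has grading $d\le0$ in the root order, and $(C_w^+)_0=\Bbb C\cdot1$ because $z^+_{f_{-w_+\Lambda}}=c_{w\Lambda}^{-1}c_{w\Lambda}=1$ and any degree-zero monomial is a product of such generators.

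Next I would analyse the action via Lemma~\ref{HACTK}(1). For $c=c_{g,v}^N\in\Bbb C[G]_{-\eta,\gamma}$, the bigrading axioms behind Theorem~\ref{PMSTRU} show that $\text{ad}_c$ shifts the first grading by $\gamma-\eta$, and Lemma~\ref{HACTK}(1) says this is nonzero only when $\gamma-\eta$ is $0$ or a positive root; thus the adjoint action \emph{never lowers} the grading. When $\gamma=\eta$ it acts on $(C_w^+)_d$ by the single scalar $-(\Phi_+d\,|\,\eta)\,g(v)$, which is additive in $d$ by the derivation property (\ref{ADJC}) since $g(v)=\sum_ig(v_i)g_i(v)$ is independent of $d$; as $\Phi_+$ is invertible and $(\cdot|\cdot)$ is nondegenerate, these scalars separate distinct gradings, so a Vandermonde argument forces every submodule to be graded. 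Combining gradedness with the no-lowering property, any simple submodule $V$ is concentrated in a single grading $d_0$ and is annihilated by every grading-raising operator; conversely each line of the space $K_d:=\{z\in(C_w^+)_d:\text{ad}_c(z)=0\text{ for all grading-raising }c\}$ is a simple submodule. Hence $\text{Soc}(C_w^+)=K:=\bigoplus_dK_d$, the graded subalgebra of adjoint $\frak n^+$-invariants, with $K_0=\Bbb C$.

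The heart of the matter is therefore to prove $K_d=0$ for $d<0$. I would start from the explicit coefficient $a_\alpha=2\sum_i(gx_{-\alpha})(v_i)g_i(v)=2g(x_{-\alpha}v)$ in Lemma~\ref{HACTK}(1): choosing $N\in\mathcal C(\frak d)$, a weight vector $v$, and $g=(x_{-\alpha}v)^\ast$ yields, for each $\alpha\in{\bf R}^+$, a raising operator acting on generators by $z_f^+\mapsto z_{fx_\alpha}^+$. By cyclicity of $U(\frak n^+)v_{w_+\Lambda}$ every weight space above the generator is reached by $\frak n^+$, so for each non-constant generator $z^+_{f_{-\lambda}}$ (where $\lambda>w_+\Lambda$) some $z^+_{f_{-\lambda}x_\alpha}$ is nonzero by Lemma~\ref{WEIG}; hence no non-constant generator lies in $K$. \emph{The main obstacle} is to upgrade this from generators to arbitrary homogeneous elements of negative grading, that is, to show the common kernel of the raising derivations is exactly $\Bbb C$. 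I would settle this by a leading-term argument with respect to a monomial order on the generators (equivalently, by identifying $K$ with the invariant ring of the dense $N^+$-action on the underlying cell), proving that a nonzero homogeneous element of grading $d<0$ always admits a raising operator nonzero on its leading term. Once $K=\Bbb C$ is established, $\text{Soc}(C_w^+)=\Bbb C$ follows, and an entirely symmetric argument using parts (2) and (3) of Lemma~\ref{HACTK} (with $\Phi_-$, negative roots, and $w_-$ in place of $\Phi_+$, positive roots, and $w_+$) gives $\text{Soc}(C_w^-)=\Bbb C$.
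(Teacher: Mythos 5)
Your proposal correctly identifies the difficulty but does not overcome it. The entire content of the lemma is concentrated in the step you label ``the main obstacle'': showing that the common kernel of the raising operators meets the negative degrees trivially for \emph{arbitrary} elements of $C_w^+$, not just for the generators $z^+_{f_{-\lambda}}$. You prove the statement only for generators and then defer the general case to a ``leading-term argument with respect to a monomial order on the generators'' or an identification with ``the invariant ring of the dense $N^+$-action,'' neither of which is carried out; since the generators satisfy relations, $C_w^+$ has no obvious monomial basis to order, and a product of generators is killed by many individual raising operators, so ``some raising operator is nonzero on the leading term'' is precisely what would need to be proved. There is also a secondary unjustified step: your Vandermonde argument for gradedness of submodules needs the scalars $-(\Phi_+d\,|\,\eta)g(v)$ to separate distinct degrees, which requires $\Phi_+=\Phi+I$ to be injective on differences of degrees. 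Nothing in the paper gives this: $u$ is an arbitrary alternating form on the complex space $\frak h^*$, so $-1$ may well be an eigenvalue of $\Phi$ (your parenthetical ``as $\Phi_+$ is invertible'' is an assumption, not a fact established here).

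The missing idea is that Lemma~\ref{HACTB} is far stronger than the way you use it, and it dissolves the generators-versus-elements gap entirely. Since $f\mapsto z_f^+$ is linear and $z_f^+z_g^+=c_{w(\Lambda+\Gamma)}^{-1}c_{f\otimes g,\,v_\Lambda\otimes v_\Gamma}$, where $v_\Lambda\otimes v_\Gamma$ is the highest weight vector of the copy of $V(\Lambda+\Gamma)$ inside $V(\Lambda)\otimes V(\Gamma)$, both sums and products of elements $z_f^+$ are again \emph{single} elements $z_h^+$ with $h\in V(\Lambda+\Gamma)^*$; hence $C_w^+=\bigcup_{\Lambda\in{\bf L}^+}T_\Lambda$ with $T_\Lambda=\{z_f^+\mid f\in V(\Lambda)^*\}$, which is exactly how the paper's proof begins. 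Consequently every nonzero element of a minimal submodule $M$ is already of the form $z_f^+$, and your own raising computation (Lemma~\ref{HACTK}(1), with coefficient $a_\alpha=2\sum_i(gx_{-\alpha})(v_i)g_i(v)$ independent of $f$) walks it up to $z^+_{f_{-w_+\Lambda}}=c_{w\Lambda}^{-1}c_{w\Lambda}=1$. Then $1\in M$, so $\Bbb C=U1\subseteq M$ because $\text{ad}_c(1)=0$ for all $c$ by (\ref{ADJC}), and minimality forces $M=\Bbb C$. With this route the grading, the Vandermonde separation, and the invariant-theoretic detour are all unnecessary, and the symmetric argument for $C_w^-$ goes through as you indicate.
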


\begin{proof}
For $\Lambda\in{\bf L}^+$, let $T_\Lambda=\{z_f^+ \ |\ f\in V(\Lambda)^*\}$.
Then $C_w^+=\cup_{\Lambda\in{\bf L}^+} T_\Lambda$ by Lemma~\ref{HACTB}.
Let $M$ be a minimal submodule of $C_w^+$ and choose $0\neq z_f^+\in M$.
Applying $\text{ad}_c$ on $z_f^+$ for a suitable element $c=c_{g,v}\in \Bbb C[G]$, we may assume that $f\in V(\Lambda)^*_{-w_+\Lambda}$ by Lemma~\ref{HACTK}(1).
Hence $1=z_f^+\in M$, and thus $\Bbb C= Uz_{f}^+=M$ since  $\text{ad}_c(1)=0$ for all $c\in \Bbb C[G]$ by (\ref{ADJC}).
 It follows that
 $\text{Soc}(C_w^+)=\Bbb C$.
Analogously  we have $\text{Soc}(C_w^-)=\Bbb C$.
 \end{proof}

\begin{thm}\label{HACTO}
There is no nontrivial Poisson ideal $I$ of $C_w^H$ such that $\text{ad}_c(I)\subseteq I$ for all $c\in \Bbb C[G]$.
\end{thm}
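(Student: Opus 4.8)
The plan is to prove the contrapositive constructively: any \emph{nonzero} ad-stable Poisson ideal $I$ of $C_w^H$ must contain $1$, and since $I$ is an ideal this forces $I=C_w^H$, so there is no nontrivial such $I$. The argument rests on three features already available. First, $C_w^H$ is spanned by the products $z_f^+z_v^-$ and the multiplication gives a triangular decomposition $C_w^H\cong C_w^+\otimes C_w^-$ (the Poisson shadow of Theorem~\ref{MMMM}, compatible with Theorem~\ref{HACTE}). Second, by Lemma~\ref{HACTK}(3) the operators $\text{ad}_c$ for $c\in\Bbb C[G]_{-\eta,\gamma}$ separate: those with $\eta<\gamma$ act nontrivially only on $C_w^+$ and kill $C_w^-$, those with $\eta>\gamma$ act only on $C_w^-$ and kill $C_w^+$, and the diagonal ones $\eta=\gamma$ act by scalars. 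Let $\mathcal U^+$ (resp.\ $\mathcal U^-$) be the subalgebra of operators generated by the $\text{ad}_c$ of the first (resp.\ second) type; since the ad-action is a derivation by (\ref{ADJC}), each $\mathcal U^{\pm}$ acts on $C_w^H$ through one tensor factor only. Third, $\text{Soc}(C_w^+)=\text{Soc}(C_w^-)=\Bbb C$ by Lemma~\ref{HACTN}. Throughout, $I$ is stable under every $\text{ad}_c$ by hypothesis, hence a module over both $\mathcal U^{\pm}$, and the ad-actions on $C_w^{\pm}$ and $C_w^H$ are locally finite and preserve $C_w^+,C_w^-,C_w^H$ by Lemma~\ref{HACTK}(4).

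First I would reduce the ``minus'' variables. Because $\mathcal U^-$ annihilates $C_w^+$ and acts by derivations, it acts on $C_w^H=C_w^+\otimes C_w^-$ through the second factor alone, with $C_w^+$ a trivial multiplicity space. Writing $C_w^H$ as a direct sum of copies of $C_w^-$ over a basis of $C_w^+$, the socle factors as
$$\text{Soc}_{\mathcal U^-}(C_w^H)=C_w^+\otimes\text{Soc}_{\mathcal U^-}(C_w^-)=C_w^+\otimes\Bbb C=C_w^+ .$$
Now take $0\neq a\in I$. By local finiteness $\mathcal U^-a$ is a finite-dimensional nonzero $\mathcal U^-$-submodule contained in $I$, so it contains a minimal $\mathcal U^-$-submodule, which necessarily lies in $\text{Soc}_{\mathcal U^-}(C_w^H)=C_w^+$. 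Hence $I\cap C_w^+\neq0$.

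Next I would reduce the ``plus'' variables in the same way. Choose $0\neq a'\in I\cap C_w^+$. On $C_w^+$ only $\mathcal U^+$ and the diagonal operators act, so $\mathcal U^+a'\subseteq C_w^+\cap I$ is finite-dimensional and nonzero by Lemma~\ref{HACTK}(4) and therefore contains a minimal $\mathcal U^+$-submodule inside $\text{Soc}(C_w^+)=\Bbb C$. As $\Bbb C$ is the unique minimal submodule, this yields $\Bbb C\subseteq I$, that is $1\in I$, and so $I=C_w^H$.

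The heart of the matter, and the step I expect to be the main obstacle, is the socle identification $\text{Soc}_{\mathcal U^-}(C_w^H)=C_w^+$ (and the analogous $\text{Soc}_{\mathcal U^+}(C_w^+)=\Bbb C$). This is precisely where the separation of variables of Lemma~\ref{HACTK}(3) must be married to the decomposition $C_w^H\cong C_w^+\otimes C_w^-$: one needs $\mathcal U^-$ to act \emph{genuinely} on the $C_w^-$ factor alone, with no hidden mixing between the two families under the bracket, so that the socle of the tensor product factors as a product of socles, after which Lemma~\ref{HACTN} pins the minus-factor down to $\Bbb C$. Verifying that the multiplication map $C_w^+\otimes C_w^-\to C_w^H$ is an isomorphism of $\mathcal U^-$-modules is the one structural point I would extract carefully from Theorem~\ref{MMMM} together with the weight computations of Lemma~\ref{HACTK}(1),(2); once that is in hand, the two reductions above are routine consequences of local finiteness and the simplicity of the socles.
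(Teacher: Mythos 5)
Your instinct — separate the plus and minus directions using Lemma~\ref{HACTK}(3) and then invoke the simplicity of the socles from Lemma~\ref{HACTN} — is exactly the mechanism the paper uses, but your argument has a genuine gap at the point you yourself flag as ``the heart of the matter'': the identification $C_w^H\cong C_w^+\otimes C_w^-$. Nothing in the paper gives injectivity of the multiplication map $\sigma\colon C_w^+\otimes C_w^-\to C_w^H$; Theorem~\ref{MMMM} and Theorem~\ref{HACTE}(1) yield only \emph{surjectivity}, and the paper is careful to call $\sigma$ a $U$-module \emph{epimorphism} and nothing more. Your first reduction genuinely needs injectivity: the identity $\text{Soc}_{\mathcal{U}^-}(C_w^H)=C_w^+$ is obtained by ``writing $C_w^H$ as a direct sum of copies of $C_w^-$ over a basis of $C_w^+$,'' which is precisely the linear disjointness of $C_w^+$ and $C_w^-$ inside $\Bbb C[G]_w$. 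Without it, $C_w^H$ is only a quotient of $C_w^+\otimes C_w^-$ as a $\mathcal{U}^-$-module, and socles do not pass to quotients: the socle of a quotient can be strictly larger than the image of the socle (already $k[x]/(x^2)\twoheadrightarrow k$ shows this). So from surjectivity alone you get only $\text{Soc}_{\mathcal{U}^-}(C_w^H)\supseteq C_w^+$, a minimal $\mathcal{U}^-$-submodule of $I$ need not lie in $C_w^+$, and the key conclusion $I\cap C_w^+\neq0$ does not follow. Saying the isomorphism can be ``extracted carefully from Theorem~\ref{MMMM}'' is not a proof: that theorem is a surjectivity statement and cannot produce injectivity, and even if the isomorphism is true (it can be checked by hand for $SL_2$ with $w=(w_0,w_0)$), it would require an independent argument that appears nowhere in the paper.

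The paper's proof is engineered to avoid exactly this issue: it performs the entire socle computation \emph{upstairs} on the tensor product $C_w^+\otimes C_w^-$, where the direct-sum decomposition over a basis of either factor is automatic, and proves $\text{Soc}(C_w^+\otimes C_w^-)=\Bbb C$ (this is where the separation of variables from Lemma~\ref{HACTK}(3) is actually deployed, much as you intend). Only then does it transport information downstairs through $\sigma$: for a nonzero ad-stable Poisson ideal $I\subseteq C_w^H$, the preimage $\sigma^{-1}(I)$ is a nonzero submodule, local finiteness (Lemma~\ref{HACTK}(4)) produces a minimal submodule inside it, that minimal submodule lies in $\text{Soc}(C_w^+\otimes C_w^-)=\Bbb C$, and since $\sigma(1\otimes 1)=1$ this puts $1\in I$. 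If you want to keep your two-step (minus-then-plus) structure, it can be salvaged without injectivity by running the first step upstairs as well: a minimal $\mathcal{U}^-$-submodule of $\sigma^{-1}(I)$ lies in $\text{Soc}_{\mathcal{U}^-}(C_w^+\otimes C_w^-)=C_w^+\otimes\Bbb C$, and $\sigma$ restricted to $C_w^+\otimes\Bbb C$ is injective (it is just $z\otimes\lambda\mapsto\lambda z$), so its image is a nonzero subspace of $I\cap C_w^+$; your second step then finishes the proof, provided you either work with all of $U$ there (so that Lemma~\ref{HACTN} applies as stated) or note that the proof of Lemma~\ref{HACTN} uses only plus-type operators and hence also computes the $\mathcal{U}^+$-socle.
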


\begin{proof}
Observe that  $C_w^+\otimes C_w^-$ is a Poisson module over $\Bbb C[G]$ with module structure
$$\begin{aligned}
&(C_w^+\otimes C_w^-)\times \Bbb C[G]\longrightarrow C_w^+\otimes C_w^-,\ \ \ (z\otimes y, a)\mapsto (z\otimes y)\cdot a=\epsilon(a)(z\otimes y)\\
&\Bbb C[G]\times (C_w^+\otimes C_w^-)\longrightarrow C_w^+\otimes C_w^-, \ \ \
(a, z\otimes y)\mapsto a*(z\otimes y)=\text{ad}_a(z)\otimes y+x\otimes\text{ad}_a(y)
\end{aligned}$$
by Lemma~\ref{ADJD} and that, by Lemma~\ref{HACTK}, $C_w^H$ is also a Poisson module over $\Bbb C[G]$ with module structure
$$za=\epsilon(a)z,\ \ \ a*z=\text{ad}_a z$$
for $a\in\Bbb C[G], z\in C_w^H$.
Let $c=c_{f_{-\lambda},v_\mu}\in\Bbb C[G]_{-\lambda,\mu}, \lambda\neq\mu$.
We verify that $c*z=0$ for any $z\in \text{Soc}(C_w^+\otimes C_w^-)$.
 Express $z\in \text{Soc}(C_w^+\otimes C_w^-)$ by  $z=\sum_i z_{g_i}^+\otimes z_{v_i}^-$, where $g_i$ and $v_i$ are weight vectors.
By Lemma~\ref{HACTB} and Lemma~\ref{HACTK}, we may assume that   weights of $g_i$ and $v_i$ are $-\ w_+\Lambda$ and $w_-\Lambda$, $\Lambda\in{\bf L}^+$,
respectively,
 by considering the elements
$c_{g_{-\eta},v_\gamma}*z\in Uz$, $\eta\neq\gamma$.
Thus $c*z=0$ since both  $\text{ad}_c(z_{f_i}^+)=0$  and  $\text{ad}_c(z_{v_i}^-)=0$ by Lemma~\ref{HACTK}(1), (2).

Let us show that $\text{Soc}(C_w^+\otimes C_w^-)=\Bbb C$.
Every element of $C_w^+\otimes C_w^-$ may be written by $\sum_i a_i\otimes b_i$, where the $b_i$ are linearly independent elements of the form $z^-_{v_\mu}$, $\mu\in\bf L$.
Suppose that $\sum_i a_i\otimes b_i\in \text{Soc}(C_w^+\otimes C_w^-)$.
Replacing each $a_i$ by $\sum_j z^+_{g_{\mu_{ij}}}$ and acting $c_{f_{-\lambda},v_\lambda}$ on $\sum_i a_i\otimes b_i$,
we may assume that each $a_i$ is a common  eigenvector of  all $c_{f_{-\lambda},v_\lambda}$, $\lambda\in{\bf L}$,  by Lemma~\ref{HACTK}(1), (2).
For any $c_{f_{-\lambda},v_\mu}$ such that $\lambda<\mu$,
$\text{ad}_{c_{f_{-\lambda},v_\mu}}(b_i)=0$ by Lemma~\ref{HACTK}(3). Thus
$$0=c_{f_{-\lambda},v_\mu}*(\sum_i a_i\otimes b_i)=\sum_i\text{ad}_{c_{f_{-\lambda},v_\mu}}(a_i)\otimes b_i.$$
 It follows that $\text{ad}_{c_{f_{-\lambda},v_\mu}}(a_i)=0$ for all $\lambda\neq\mu $ by Lemma~\ref{HACTK}(3),
  and thus $Ua_i=\Bbb Ca_i$ by Lemma~\ref{HACTK}(1). Hence $a_i\in\text{Soc}(C_w^+)=\Bbb C$  by Lemma~\ref{HACTN}. Therefore $\sum a_i\otimes b_i\in\text{Soc}(\Bbb C\otimes C_w^-)=\Bbb C\otimes\Bbb C$, as claimed.

The multiplication map $\sigma: C_w^+\otimes C_w^-\longrightarrow C_w^H$ is a $U$-module epimorphism by Theorem~\ref{HACTE}(1) since the canonical Poisson adjoint action
is a derivation by (\ref{ADJC}). If  $I$ is a nonzero Poisson ideal of $C_w^H$
invariant under the Poisson adjoint action then
$I$ is a submodule of $C_w^H$ and thus $\sigma^{-1}(I)$ contains a minimal submodule since $C_w^+\otimes C_w^-$
is locally finite by Lemma~\ref{HACTK}(4).  Therefore
$\sigma^{-1}(I)$ has a nonzero element of $\text{Soc}(C_w^+\otimes C_w^-)=\Bbb C$. It follows that   $I=C_w^H$.
\end{proof}

\begin{cor}\label{HACTP}
The Poisson algebra $\Bbb C[G]_w^H$ has no nontrivial Poisson ideal $I$ such that
$\text{ad}_c(I)\subseteq I$ for all $c\in \Bbb C[G]$.
Moreover $(\Bbb C[G]_w^H)^{\text{ad}}=\Bbb C$.
\end{cor}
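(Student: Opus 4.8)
The plan is to derive both assertions from Theorem~\ref{HACTO}, which rules out nontrivial ad-invariant Poisson ideals in $C_w^H$, exploiting the fact that $\Bbb C[G]_w^H$ is a localization of $C_w^H$ at the multiplicative set $\mathcal D$ by Theorem~\ref{HACTE}(2).

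For the first assertion, suppose $I$ is a nonzero Poisson ideal of $\Bbb C[G]_w^H$ with $\text{ad}_c(I)\subseteq I$ for all $c\in\Bbb C[G]$, and set $J=I\cap C_w^H$. First I would check that $J$ is a Poisson ideal of $C_w^H$: it is an ideal, and for $a\in C_w^H$, $x\in J$ one has $\{a,x\}\in I$ (as $I$ is a Poisson ideal) and $\{a,x\}\in C_w^H$ (as $C_w^H$ is a Poisson subalgebra by Lemma~\ref{HACTC}), so $\{a,x\}\in J$. Next, $J$ is ad-invariant since for $x\in J$ one has $\text{ad}_c(x)\in\text{ad}_c(I)\subseteq I$ and $\text{ad}_c(x)\in\text{ad}_c(C_w^H)\subseteq C_w^H$ by Lemma~\ref{HACTK}(4). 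Finally $J\neq0$: picking $0\neq y\in I$ and clearing denominators, $yd\in C_w^H$ for some $d\in\mathcal D$, and $yd\in I$ because $d\in\Bbb C[G]_w^H$; as $d$ is a unit, $yd\neq0$, so $yd\in J\setminus\{0\}$. Then Theorem~\ref{HACTO} forces $J=C_w^H$, hence $1\in I$ and $I=\Bbb C[G]_w^H$.

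For the second assertion I would first show that every $z\in(\Bbb C[G]_w^H)^{\text{ad}}$ is Poisson central in $\Bbb C[G]_w^H$. Viewing $z$ inside $\Bbb C[G]_w$ with its canonical Poisson $\Bbb C[G]$-module structure $a*z=\{a,z\}$ from Lemma~\ref{HACTA}(2), the hypothesis $\text{ad}_c(z)=0$ for all $c$ places $z$ in $(\Bbb C[G]_w)^{\text{ad}}=\mathcal Z(\Bbb C[G]_w)$ by Theorem~\ref{ADJK}; thus $\{a,z\}=0$ for all $a\in\Bbb C[G]$, and by the derivation property of the Poisson bracket on the localization this makes $z$ central in all of $\Bbb C[G]_w$, hence in $\Bbb C[G]_w^H$. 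Centrality is exactly what is needed so that, for $0\neq z\in(\Bbb C[G]_w^H)^{\text{ad}}$, the principal ideal $z\,\Bbb C[G]_w^H$ is a Poisson ideal (since $\{y,zy'\}=z\{y,y'\}$) which is also ad-invariant (since $\text{ad}_c(zy')=z\,\text{ad}_c(y')$ by (\ref{ADJC}) and $\text{ad}_c(z)=0$, noting $\text{ad}_c$ preserves $\Bbb C[G]_w^H$ by Theorem~\ref{HACTE}(3)). By the first assertion it equals $\Bbb C[G]_w^H$, so $z$ is a unit; differentiating $zz^{-1}=1$ gives $\text{ad}_c(z^{-1})=0$ too, so $(\Bbb C[G]_w^H)^{\text{ad}}$ is a field containing $\Bbb C$.

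To finish I would invoke finiteness: $\Bbb C[G]_w$ is an affine $\Bbb C$-algebra, being a localization of the affine algebra $\Bbb C[G]/I_w$ at $\mathcal E_w$, which by (\ref{ADDD}) is generated up to scalars by the finitely many $c_{w\omega_i},\tilde c_{w\omega_i}$; since $H$ is a torus, the invariants $\Bbb C[G]_w^H$ form an affine algebra as well. Being nonzero and finitely generated over the algebraically closed field $\Bbb C$, it admits a maximal ideal $\frak m$ with $\Bbb C[G]_w^H/\frak m\cong\Bbb C$ by the Nullstellensatz. For $z\in(\Bbb C[G]_w^H)^{\text{ad}}$ with image $\lambda\in\Bbb C$ modulo $\frak m$, the element $z-\lambda$ lies in $\frak m$ and so is not a unit; but $z-\lambda$ lies in the field $(\Bbb C[G]_w^H)^{\text{ad}}$, forcing $z-\lambda=0$, i.e. $z\in\Bbb C$. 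Thus $(\Bbb C[G]_w^H)^{\text{ad}}=\Bbb C$. The main obstacle I anticipate is the passage from ad-invariance to Poisson centrality, which compels one to argue inside $\Bbb C[G]_w$ rather than $\Bbb C[G]_w^H$ and to use the Hopf-algebraic identity of Theorem~\ref{ADJK}; the concluding field argument additionally rests on the affineness of the torus-invariant algebra $\Bbb C[G]_w^H$.
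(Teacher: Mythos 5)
Your proof is correct, and for the first assertion it is exactly the paper's argument with the details written out: the paper simply observes that since $\Bbb C[G]_w^H=(C_w^H)_{\mathcal D}$ by Theorem~\ref{HACTE}(2), a nontrivial ad-invariant Poisson ideal would contradict Theorem~\ref{HACTO}; your contraction $J=I\cap C_w^H$ together with the clearing-of-denominators step is the right way to fill this in (one small citation slip: that $C_w^H$ is a Poisson subalgebra is Theorem~\ref{HACTE}(1), while Lemma~\ref{HACTC} covers $C_w$ and $C_w^{\pm}$). For the second assertion you and the paper share the key mechanism --- an ad-invariant element is Poisson central by Theorem~\ref{ADJK}, hence generates ad-invariant Poisson ideals, and the first assertion then forces invertibility --- but the concluding device differs. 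The paper takes $y\in(\Bbb C[G]_w^H)^{\text{ad}}\setminus\Bbb C$, notes it is transcendental, concludes every $y-a$ ($a\in\Bbb C$) is invertible, and gets a contradiction because $\{(y-a)^{-1}\mid a\in\Bbb C\}$ is an uncountable linearly independent family in the countable-dimensional space $\Bbb C[G]_w^H$; this needs nothing beyond countable dimensionality and the uncountability of $\Bbb C$. You instead show $(\Bbb C[G]_w^H)^{\text{ad}}$ is a field and evaluate at a maximal ideal with residue field $\Bbb C$, which requires $\Bbb C[G]_w^H$ to be an affine $\Bbb C$-algebra. That affineness is true but imports two ingredients the paper never uses: (a) affineness of $\Bbb C[G]_w$, which rests on identifying the localization at $\mathcal E_w$ with inversion of the finitely many $c_{w\omega_i},\tilde c_{w\omega_i}$ --- this uses (\ref{ADDD}) together with the implicit compatibility, for dominant weights, of the product $c_{w\Lambda}c_{w\Gamma}$ with $c_{w(\Lambda+\Gamma)}$ up to scalar modulo $I_w$ (a convention the paper itself relies on, so this is consistent rather than a gap); and (b) Hilbert's finiteness theorem for the rational action of the linearly reductive torus $H$. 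So your route is valid but heavier in external machinery, whereas the paper's countability trick is more elementary and self-contained; on the other hand, your argument isolates the clean intermediate fact that the ad-invariants form a field and would work over any algebraically closed base field, with no appeal to uncountability.
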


\begin{proof}
Since $\Bbb C[G]_w^H$ is a localization of $C_w^H$ by  Theorem~\ref{HACTE}(2),
$\Bbb C[G]_w^H$ has no nontrivial Poisson ideal invariant under the Poisson adjoint action by Theorem~\ref{HACTO}.
Suppose that there exists an element $y\in (\Bbb C[G]_w^H)^{\text{ad}}\setminus\Bbb C$.
Then $y$ is transcendental over $\Bbb C$ and the ideal $(y-a)\Bbb C[G]_w^H$, $a\in \Bbb C$, is a nonzero Poisson ideal of
$\Bbb C[G]_w^H$ invariant under the Poisson adjoint action by Theorem~\ref{ADJK}.
Thus $y-a$ is invertible for each $a\in\Bbb C$, that is a contradiction
since  $\{(y-a)^{-1}|a\in \Bbb C\}$ is an uncountably infinite and linearly independent set but
$\Bbb C[G]_w^H$ has a countable dimension over $\Bbb C$.
\end{proof}

\begin{thm}\label{HACTQ}
Fix $w\in W\times W$. Let $\mathcal{Z}_w$ be the Poisson center of $\Bbb C[G]_w$, that is,
$$\mathcal{Z}_w=\{a\in \Bbb C[G]_w\ |\ \{a, b\}=0\ \text{for all}\ b\in \Bbb C[G]_w\}.$$ Then

(1) $\mathcal{Z}_w=\Bbb C[G]_w^{\text{ad}}$.

(2) $\mathcal{Z}_w=\oplus_{\lambda\in {\bf L}} \mathcal{Z}_\lambda$, where $\mathcal{Z}_\lambda=\mathcal{Z}_w\cap \Bbb C[G]_w^Hc_{w\lambda}$.

(3) If $\mathcal{Z}_\lambda\neq0$ then $\mathcal{Z}_\lambda=\Bbb Cu_\lambda$ for some unit $u_\lambda$.

(4) $\mathcal{Z}_w$ is isomorphic to a group algebra of a free abelian group with finite rank over $\Bbb C$.

(5) The group $H$ acts transitively on the maximal ideals of $\mathcal{Z}_w$.
\end{thm}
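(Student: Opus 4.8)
The plan is to derive all five parts from a single structural picture: the Poisson center coincides with the fixed points of the Poisson adjoint action, the localization $\Bbb C[G]_w$ is strongly ${\bf L}$-graded with invertible components $c_{w\lambda}$ by Theorem~\ref{HACTE}, and the rigidity statement Corollary~\ref{HACTP} forces the degree-$0$ behaviour to be as small as possible. For (1) I would apply Theorem~\ref{ADJK} with $A=\Bbb C[G]$ and $M=\Bbb C[G]_w$, the latter being a Poisson $\Bbb C[G]$-module by Lemma~\ref{HACTA}(2), so that $a*m=\{a,m\}$. This yields $\mathcal{Z}(\Bbb C[G]_w)=\Bbb C[G]_w^{\text{ad}}$, where $\mathcal{Z}(\Bbb C[G]_w)=\{m\mid \{a,m\}=0 \text{ for all }a\in\Bbb C[G]\}$. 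It then remains to identify $\mathcal{Z}(\Bbb C[G]_w)$ with the full Poisson center $\mathcal{Z}_w$; the nontrivial inclusion holds because $\Bbb C[G]_w$ is generated over the image of $\Bbb C[G]$ by inverses of $\mathcal{E}_w$ and $\{\cdot,\cdot\}$ is a biderivation, so a bracket annihilating every element of $\Bbb C[G]$ annihilates every element of the localization.

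For (2) I combine (1) with Theorem~\ref{HACTE}(3): since $\Bbb C[G]_w=\bigoplus_\lambda(\Bbb C[G]_w)_\lambda$ with each $(\Bbb C[G]_w)_\lambda=\Bbb C[G]_w^Hc_{w\lambda}$ invariant under the Poisson adjoint action, the $\text{ad}$-fixed space splits along the grading, giving $\Bbb C[G]_w^{\text{ad}}=\bigoplus_\lambda\big((\Bbb C[G]_w)_\lambda\big)^{\text{ad}}$ and $\big((\Bbb C[G]_w)_\lambda\big)^{\text{ad}}=\mathcal{Z}_w\cap(\Bbb C[G]_w)_\lambda=\mathcal{Z}_\lambda$. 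The special case $\lambda=0$ is crucial for what follows: $\mathcal{Z}_0=(\Bbb C[G]_w^H)^{\text{ad}}=\Bbb C$ by Corollary~\ref{HACTP}.

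The heart of the argument, and what I expect to be the main obstacle, is (3). Given $0\ne z\in\mathcal{Z}_\lambda$, write $z=ac_{w\lambda}$ with $a\in\Bbb C[G]_w^H$. Because $z$ is Poisson central and $\text{ad}_c(z)=0$ by (1), the derivation property (\ref{ADJC}) shows the principal ideal $z\Bbb C[G]_w$ is both a Poisson ideal and invariant under the adjoint action. Since $c_{w\lambda}$ is a unit (so $z\Bbb C[G]_w=a\Bbb C[G]_w$), its degree-$0$ component is $a\Bbb C[G]_w^H$, and by compatibility of $\{\cdot,\cdot\}$ (the bigrading) and of $\text{ad}$ (Theorem~\ref{HACTE}(3)) with the grading, $a\Bbb C[G]_w^H$ is a nonzero $\text{ad}$-invariant Poisson ideal of $\Bbb C[G]_w^H$. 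By Corollary~\ref{HACTP} it must equal $\Bbb C[G]_w^H$, so $a$, hence $z$, is a unit. Consequently $z^{-1}\in\mathcal{Z}_{-\lambda}$, and for any $z'\in\mathcal{Z}_\lambda$ we have $z'z^{-1}\in\mathcal{Z}_\lambda\mathcal{Z}_{-\lambda}\subseteq\mathcal{Z}_0=\Bbb C$, whence $\mathcal{Z}_\lambda=\Bbb C z=:\Bbb C u_\lambda$ with $u_\lambda$ a unit. The delicate point here is precisely the verification that passing to the degree-$0$ part preserves the ``ad-invariant Poisson ideal'' property, so that Corollary~\ref{HACTP} can be invoked.

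For (4) and (5), set $\Gamma=\{\lambda\in{\bf L}\mid\mathcal{Z}_\lambda\ne0\}$. Since each nonzero $u_\lambda$ is a unit, products and inverses of the $u_\lambda$ stay homogeneous and nonzero, so $\Gamma$ is a subgroup of ${\bf L}$, hence free abelian of finite rank as ${\bf L}$ is. From $u_\lambda u_\mu\in\mathcal{Z}_{\lambda+\mu}=\Bbb C u_{\lambda+\mu}$ one gets a (commutative) twisted group-algebra structure; choosing a basis $e_1,\dots,e_r$ of $\Gamma$ and setting $v_{\sum n_ie_i}=\prod_i u_{e_i}^{n_i}$ produces a commuting homogeneous basis with $v_\lambda v_\mu=v_{\lambda+\mu}$, trivializing the cocycle and giving $\mathcal{Z}_w\cong\Bbb C[\Gamma]$. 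For (5), the maximal ideals of $\Bbb C[\Gamma]$ are the characters $\text{Hom}(\Gamma,\Bbb C^\times)$, and $h\in H$ acts by $v_\lambda\mapsto\lambda(h)v_\lambda$, hence carries a character $\chi$ to $\lambda\mapsto\lambda(h)^{-1}\chi(\lambda)$. Transitivity of the $H$-action then reduces to surjectivity of the restriction map $H=\text{Hom}({\bf L},\Bbb C^\times)\longrightarrow\text{Hom}(\Gamma,\Bbb C^\times)$, which holds because $\Bbb C^\times$ is divisible (injective as an abelian group) and $\Gamma\subseteq{\bf L}$ is a subgroup of a finitely generated free abelian group.
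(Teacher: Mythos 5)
Your proposal is correct and takes essentially the same route as the paper's proof: part (1) from Theorem~\ref{ADJK} applied to the Poisson module structure of Lemma~\ref{HACTA}(2), part (2) from the $\text{ad}$-invariant decomposition in Theorem~\ref{HACTE}(3), part (3) by forcing the nonzero $\text{ad}$-invariant Poisson ideal $a\Bbb C[G]_w^H$ to equal $\Bbb C[G]_w^H$ via Corollary~\ref{HACTP}, and parts (4)--(5) from the subgroup $\{\lambda\in{\bf L}\mid \mathcal{Z}_\lambda\neq0\}$ and the identification of maximal ideals with characters. The only local difference is in (3), where the paper certifies the ideal property by observing that $a=u_\lambda c_{w\lambda}^{-1}$ is Poisson normal (Lemma~\ref{CPRIBB}) while you pass through the principal ideal $z\Bbb C[G]_w$ and its degree-zero component --- an equally valid verification; otherwise your write-up simply makes explicit steps the paper leaves implicit (extending centrality across the localization in (1), trivializing the cocycle in (4), divisibility of $\Bbb C^\times$ in (5)).
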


\begin{proof}
(1) It is proved by Theorem~\ref{ADJK}.

(2) It is proved by Theorem~\ref{HACTE}(3).

(3) Let $u_\lambda$ be a nonzero element of $\mathcal{Z}_\lambda$. Then $u_\lambda=ac_{w\lambda}$ for some $a\in\Bbb C[G]_w^H$.
This implies that $a=u_\lambda c_{w\lambda}^{-1}$ is a Poisson normal element by Lemma~\ref{CPRIBB}. Hence the ideal of $\Bbb C[G]_w^H$ generated by $a$
is a nonzero Poisson ideal invariant under the Poisson adjoint action. It follows that $a$  is a unit by Corollary~\ref{HACTP}
and thus $u_\lambda$ is a unit.
If $z\in \mathcal{Z}_\lambda$ then $zu_\lambda^{-1}\in \mathcal{Z}_0$. Thus  $\mathcal{Z}_\lambda=\Bbb Cu_\lambda$ since $\mathcal{Z}_0=\Bbb C$ by Corollary~\ref{HACTP}.

(4) Let ${\bf M}=\{\lambda\in{\bf L}|\mathcal{Z}_\lambda\neq0\}$.  Then ${\bf M}$ is a subgroup of ${\bf L}$ by (2), (3) and (\ref{ADDD}). Thus $\mathcal{Z}_w$ is isomorphic to the group algebra of the free abelian group ${\bf M}$ of finite rank over $\Bbb C$ by (2), (3).

(5) By the Hilbert's Nullstellensatz and (4), $H$ acts transitively on the maximal ideals of $\mathcal{Z}_w$.
\end{proof}

\begin{thm}\label{HACTR}
For each $w\in W\times W$, the Poisson ideals of $\Bbb C[G]_w$ are generated by their intersection with the Poisson center $\mathcal{Z}_w$.
\end{thm}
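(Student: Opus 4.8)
Throughout set $R=\Bbb C[G]_w$ and $Z=\mathcal{Z}_w$. Since $Z\subseteq R$ and $I$ is an ideal, $(I\cap Z)R\subseteq I$ is automatic, so the whole content is the reverse inclusion. First I would record the preliminaries. Every Poisson ideal $I$ is stable under the canonical Poisson adjoint action, because $\text{ad}_c(x)=\sum_{(c)}\{c_1,x\}S(c_2)\in I$ whenever $x\in I$. Next, by Theorem~\ref{HACTE}(3) and Theorem~\ref{HACTQ} the algebra decomposes as $R=\bigoplus_{\lambda\in{\bf L}}\Bbb C[G]_w^H\,c_{w\lambda}$ with the $c_{w\lambda}$ normal units and $Z=\bigoplus_{\lambda\in{\bf M}}\Bbb C\,u_\lambda$ with the $u_\lambda$ units; since $c_{w\mu}$ and $u_\mu$ agree up to a unit of $R_0:=\Bbb C[G]_w^H$ for $\mu\in{\bf M}$, $R$ is a \emph{free} $Z$-module, $R^{\text{ad}}=Z$ by Theorem~\ref{HACTQ}(1), and $R_0$ is ad-simple with $R_0^{\text{ad}}=\Bbb C$ by Corollary~\ref{HACTP}. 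Finally, because every object of $\mathcal{C}(\frak d)$ is finite dimensional and $\mathcal{C}(\frak g)$ is semisimple, the adjoint action on $R$ is locally finite and completely reducible; projecting onto its trivial isotypic component yields a $Z$-linear map $\pi\colon R\to Z$ with $\pi(I)=I\cap Z$ for every ad-stable subspace $I$.

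The core of the plan is to prove that inverting the centre makes $R$ ad-simple. Put $F=\mathrm{Frac}(Z)$ and $R_F=R\otimes_Z F$; the adjoint action is $Z$-linear, hence extends $F$-linearly, and flatness of the localisation gives $(R_F)^{\text{ad}}=Z\otimes_Z F=F$. I would first show the degree-zero fibre $R_0\otimes_{\Bbb C}F$ is ad-simple: in a nonzero ad-stable Poisson ideal pick $0\neq x=\sum_{i=1}^n a_i\otimes f_i$ with the $f_i$ linearly independent over $\Bbb C$ and $n$ least; the $f_1$-coefficients of such elements form an ad-stable Poisson ideal of $R_0$, which is all of $R_0$ by Corollary~\ref{HACTP}, so one may take $a_1=1$, and then applying $\text{ad}_c$ lowers the length and forces $a_i\in R_0^{\text{ad}}=\Bbb C$ for $i\ge2$, whence $x\in 1\otimes F^\times$. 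Writing $\Gamma={\bf L}/{\bf M}$, one has $R_F=\bigoplus_{\delta\in\Gamma}(R_0\otimes F)\,c_{w\delta}$ with each component generated by the unit $c_{w\delta}$; repeating the minimal-length argument along the $\Gamma$-grading, the off-degree terms are eliminated using $(R_F)^{\text{ad}}=F$ — concretely, after normalising a leading term and dividing by its unit one gets a summand in degree $0$, and the remaining homogeneous pieces $b\,c_{w\delta}$ with $\delta\neq0$ are ad-invariant by minimality, hence $b=0$ since $F$ lies in degree $0$. Thus $R_F$ is ad-simple, and consequently the symmetric $Z$-bilinear form $\langle a,x\rangle=\pi(ax)$ is nondegenerate over $F$, its radical being an ad-stable ideal of $R_F$ that does not contain $1=\langle1,1\rangle$.

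To finish I would exploit $\pi$ together with this nondegeneracy. For a Poisson ideal $I$ with $\mathfrak b=I\cap Z$, every $x\in I$ satisfies $\pi(Rx)\subseteq\pi(I)=\mathfrak b$, and the same holds for $x\in\mathfrak b R$; so both $I$ and $\mathfrak b R$ lie in $W=\{x\in R\mid \pi(Rx)\subseteq\mathfrak b\}$. The decisive point is that nondegeneracy of $\langle\,,\,\rangle$ forces $W=\mathfrak b R$, whence $I\subseteq\mathfrak b R=(I\cap Z)R$ and the theorem follows. The main obstacle is precisely this last equality: nondegeneracy is immediate only over the generic fibre $F$, and it must be promoted to $W=\mathfrak b R$ over $Z$ itself, a step where the Poisson centre could a priori jump on the degenerate subvarieties of $\operatorname{Spec}Z$. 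This is where the $H$-grading and the local finiteness of the adjoint action are essential: the form pairs the block of degree $\bar\delta$ with that of $-\bar\delta$ and, after reducing $c_{w\lambda}c_{w\lambda'}$ to a unit $u_{\lambda+\lambda'}$ of $Z$ times an element of $R_0$, restricts on each pair of finite-dimensional ad-isotypic blocks to the nondegenerate Reynolds pairing of $R_0$ scaled by $u_{\lambda+\lambda'}$; assembling these blockwise perfect pairings delivers $W=\mathfrak b R$ without ever needing $I$ to be homogeneous.
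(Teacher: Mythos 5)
Your reduction to a Reynolds-type projection $\pi\colon \Bbb C[G]_w\to\mathcal{Z}_w$ is where the proof breaks. You justify $\pi$ by asserting that the Poisson adjoint action on $R=\Bbb C[G]_w$ is locally finite and \emph{completely reducible}, ``because every object of $\mathcal{C}(\frak d)$ is finite dimensional and $\mathcal{C}(\frak g)$ is semisimple.'' This conflates two different categories: semisimplicity of $\mathcal{C}(\frak g)$ concerns $U(\frak d)$-modules, whereas the adjoint action makes $\Bbb C[G]_w$ a module over the Poisson enveloping algebra $U$ of $\Bbb C[G]$, and that module is provably \emph{not} semisimple. Indeed, $C_w^+$ is an ad-stable subspace of $\Bbb C[G]_w$ by Lemma~\ref{HACTK}(4), and its submodules for the $U$-module structure $za=\epsilon(a)z$, $a*z=\text{ad}_a(z)$ are exactly its ad-stable subspaces; Lemma~\ref{HACTN} gives $\text{Soc}(C_w^+)=\Bbb C$, so if the adjoint action were completely reducible then $C_w^+$ would equal its socle, i.e.\ $C_w^+=\Bbb C$, which is false. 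Consequently there is no isotypic decomposition of $\Bbb C[G]_w$, no projection onto a ``trivial isotypic component'' satisfying $\pi(I)=I\cap\mathcal{Z}_w$ for every ad-stable subspace $I$, and no ``finite-dimensional ad-isotypic blocks'' on which your pairing $\langle a,x\rangle=\pi(ax)$ could be analyzed. The entire third paragraph --- in particular the decisive equality $W=\frak b R$, which you yourself flag as the main obstacle --- therefore has no foundation: promoting nondegeneracy from the generic fibre $F=\mathrm{Frac}(\mathcal{Z}_w)$ down to $\mathcal{Z}_w$ is exactly the hard point, and it is ``resolved'' only by appeal to the nonexistent blocks and Reynolds pairing.

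By contrast, the paper's proof needs neither semisimplicity nor localization at the centre. Its projection $\pi$ is onto the degree-zero component $\Bbb C[G]_w^H$ of the $H$-eigenspace decomposition $\Bbb C[G]_w=\bigoplus_{\lambda\in{\bf L}}\Bbb C[G]_w^Hc_{w\lambda}$ of Theorem~\ref{HACTE}(3) --- a purely grading-theoretic map that automatically commutes with $\text{ad}_c$ --- and the finish is a minimal-support induction: if $I\neq I'=(I\cap\mathcal{Z}_w)\Bbb C[G]_w$, choose $f\in I\setminus I'$ of minimal support containing $0$; Corollary~\ref{HACTP} (ad-simplicity of $\Bbb C[G]_w^H$) allows one to normalize $\pi(f)=1$, after which $\text{ad}_c(f)$ has strictly smaller support for every $c$, forcing $\text{ad}_c(f)=0$ and hence $f\in\Bbb C[G]_w^{\text{ad}}=\mathcal{Z}_w\subseteq I'$ by Theorem~\ref{HACTQ}(1), a contradiction. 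Your second paragraph (ad-simplicity of $R\otimes_Z F$ by a minimal-length argument) is a plausible lemma in the same spirit and uses only the ingredients the paper actually establishes, but, as you concede, it controls only the generic fibre; the passage from there to arbitrary Poisson ideals of $\Bbb C[G]_w$ is precisely what is missing. Replacing the isotypic projection by the graded projection onto $\Bbb C[G]_w^H$ and running your minimal-length argument against supports, as the paper does, is the way to close the gap.
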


\begin{proof}
Note that $\Bbb C[G]_w$ and $\Bbb C[G]_w^H$ are Poisson modules over $\Bbb C[G]$ with module structure
$$za=\epsilon(a)z,\ \ \ a*z=\text{ad}_a z$$
for $a\in \Bbb C[G]$ by Lemma~\ref{HACTA}(2), Lemma~\ref{ADJD}(4), Theorem~\ref{HACTE}(1) and Lemma~\ref{HACTK}(1), (2).
The following argument is a modification of  \cite[Proof of Theorem 4.15]{HoLeT}.
Any element $f\in\Bbb C[G]_w$ can be written uniquely in the form $f=\sum a_\lambda c_{w\lambda}$ by Theorem~\ref{HACTE}(3),
where $a_\lambda\in \Bbb C[G]_w^H$.
Define $\pi:\Bbb C[G]_w\longrightarrow \Bbb C[G]_w^H$ to be the projection given by
$\pi(\sum a_\lambda c_{w\lambda})=a_0$. Since $\text{ad}_c(a_\lambda c_{w\lambda})\in (\Bbb C[G]_w)_\lambda$ for $c\in  \Bbb C[G]$ by Theorem~\ref{HACTE}(3),
$$\pi(\text{ad}_c(f))=\text{ad}_c(a_0)=\text{ad}_c\pi(f)$$
for $c\in  \Bbb C[G]$. Thus $\pi$ is  a homomorphism of Poisson modules. Define the support of $f$ to be
$\text{Supp}(f)=\{\lambda\in{\bf L}\ |\ a_\lambda\neq0\}$. Let $I$ be a Poisson ideal of $\Bbb C[G]_w$.
For any set $Y\subseteq{\bf L}$ such that $0\in Y$, define
$$I_Y=\{b\in \Bbb C[G]_w^H\ |\ b=\pi(f)\text{ for some }f\in I\text{ such that }\text{Supp}(f)\subseteq Y\}.$$
Since $I$ is a Poisson ideal of $\Bbb C[G]_w$, $I$ is invariant under the Poisson adjoint action.
Let $b\in I_Y$. Then $b=\pi(f)$ for some $f=\sum a_\lambda c_{w\lambda}\in I$ such that $\text{Supp}(f)\subseteq Y$.
For any $a\in\Bbb C[G]^H_w$,
$$\begin{aligned}
I\ni af&=\sum (aa_\lambda) c_{w\lambda},\\
I\ni\{f,a\}&=\sum(\{a_\lambda,a\}c_{w\lambda}+a_\lambda\{a,c_{w\lambda}\})=\sum(\{a_\lambda,a\}c_{w\lambda}+caa_\lambda c_{w\lambda}), \ \ c\in\Bbb C
\end{aligned}$$
by Lemma~\ref{CPRIBB} and
thus $\text{Supp}(\{f,a\})\subseteq\text{Supp}(f)\subseteq Y$  and $\{b,a\}\in I_Y$. Hence
 $I_Y$ is also a Poisson ideal of $\Bbb C[G]_w^H$
invariant under the Poisson adjoint action since   $\pi$ is a Poisson module homomorphism and $\text{Supp}(\text{ad}_c(f))\subseteq Y$ for $\text{Supp}(f)\subseteq Y$ and $c\in \Bbb C[G]$. Hence
 $I_Y$ is either zero or $\Bbb C[G]_w^H$ for each $Y\subseteq {\bf L}$ by Corollary~\ref{HACTP}.

Now let $I'=(I\cap \mathcal{Z}_w)\Bbb C[G]_w$ and suppose $I\neq I'$. Choose an element $f=\sum a_\lambda c_{w\lambda}\in I\setminus I'$
whose support $S$ has the smallest cardinality. We may assume without loss of generality that $0\in S$.
Suppose that there exists $g\in I'$
with $\text{Supp}(g)\subseteq S$ and fix $\lambda\in \text{Supp}(g)$. Then $gc_{w\lambda}^{-1}\in I'$ and $0\in \text{Supp}(gc_{w\lambda}^{-1})$.
Thus there exists an element $g'\in I'$ with  $\text{Supp}(g')\subseteq \text{Supp}(gc_{w\lambda}^{-1})$ and $\pi(g')=1$ by the above paragraph.
But then $f-a_\lambda g'c_{w\lambda}$ is an element of $I\setminus  I'$ with smaller support than $f$.
Thus there can be no elements in $ I'$ whose support is contained in $S$.

Since $0\neq a_0\in I_S$, $I_S=\Bbb C[G]_w^H$ by the first paragraph.
Hence we may assume that $\pi(f)=a_0=1$.
Then $\text{ad}_c(f)\in I'$ for any $c\in\Bbb C[G]$ since $|\text{Supp}(\text{ad}_c(f))|<|\text{Supp}(f)|$
and $\text{ad}_c(f)\in I$, thus $\text{ad}_c(f)=0$ for any $c\in\Bbb C[G]$ by
the second paragraph since
$\text{Supp}(\text{ad}_c(f))\subseteq S$.
It follows that $f\in I\cap \Bbb C[G]_w^{\text{ad}}=I\cap \mathcal{Z}_w\subseteq I'$ by Theorem~\ref{HACTQ}(1), that is  a contradiction.
This completes the proof.
\end{proof}

\subsection{}
Recall the  Poisson Dixmier-Moeglin equivalence in \cite[Theorem 2.4 and preceding comment]{Oh4}.
Let ${\bf k}$ be an algebraically closed field with characteristic zero. A Poisson ${\bf k}$-algebra $A$
is said to satisfy the  {\it Poisson Dixmier-Moeglin equivalence} if the
following conditions are equivalent: For a Poisson prime ideal $P$
of $A$,
\begin{itemize}
\item[(i)] $P$ is Poisson primitive (i.e., there exists a maximal ideal $M$ of $A$ such that $P$ is the largest
Poisson ideal contained in $M$).
\item[(ii)] $P$ is rational (i.e., the Poisson center of the quotient field of $A/P$ is equal to ${\bf k}$).
\item[(iii)] $P$ is locally closed (i.e., the intersection of all   Poisson prime ideals properly containing $P$
is strictly larger than $P$).
\end{itemize}

Note that ${\bf L}$ is the character group of the torus $H$. Hence there is an action of $H\times H$ on $\Bbb C[G]$ defined by
\begin{equation}\label{MACT}
(H\times H)\times \Bbb C[G]\longrightarrow\Bbb C[G],\ \ (h,h').c^M_{f,v}=\lambda(h)\mu(h')c^M_{f,v},
\end{equation}
where $f\in(M^*)_\lambda,v\in M_\mu$. Since $(\lambda+\mu)(h)=\lambda(h)\mu(h)$ for all $\lambda,\mu\in{\bf L}$ and $h\in H$, each element  $(h,h')\in H\times H$ acts by Poisson automorphism by Theorem~\ref{PMSTRU}.
Let $\{\omega_1,\ldots,\omega_n\}$ be a basis of ${\bf L}$ such that ${\bf L}^+=\sum_i\Bbb Z_{\geq0}\omega_i$, as in the proof of Theorem~\ref{ALGEB}. Then $H\times H$ acts rationally on $\Bbb C[G]$ since $\Bbb C[G]$ is generated by finitely many eigenvectors of the form  $c_{f_{-\lambda},v_\mu}^{V(\omega_i)}$, $v_\mu\in V(\omega_i)_\mu, f_{-\lambda}\in (V(\omega_i)^*)_{-\lambda}$, and each $V(\omega_i)$ is finite dimensional. (See the proof of Theorem~\ref{ALGEB}.)

An ideal $I$ of $\Bbb C[G]$ is said to be {\it $H\times H$-ideal} (or $H\times H$-stable) if $(H\times H)(I)\subseteq I$. An $H\times H$-ideal $P$ is said to be {\it $H\times H$-prime} if, for $H\times H$-ideals $I$ and $J$, $IJ\subseteq P$ implies $I\subseteq P$ or $J\subseteq P$. A Poisson ideal $I$ of $\Bbb C[G]$ is said to be {\it Poisson  $H\times H$-prime ideal} if $I$ is $H\times H$-prime ideal.
For an ideal $J$ of $\Bbb C[G]$, we denote $$(J:H\times H)=\bigcap_{(h,h')\in H\times H}(h,h')(J).$$
Note that $(J:H\times H)$ is an $H\times H$-ideal and that $(J:H\times H)$ is $H\times H$-prime if $J$ is prime.

\begin{lem}\label{FINL}
Let $a\in\Bbb C[G]_{\lambda,\mu}$ and $a'\in\Bbb C[G]_{\lambda',\mu'}$. If $(\lambda,\mu)\neq(\lambda',\mu')$ then there exists
an element $(h,h')\in H\times H$ such that the eigenvalues of $a$ and $a'$ with respect to the action of $(h,h')$ are distinct.
\end{lem}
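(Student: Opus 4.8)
The plan is to argue by contraposition, using the fact that by (\ref{MACT}) a homogeneous element $a\in\Bbb C[G]_{\lambda,\mu}$ is an eigenvector for the action of $(h,h')\in H\times H$ with eigenvalue $\lambda(h)\mu(h')$, and likewise $a'\in\Bbb C[G]_{\lambda',\mu'}$ is an eigenvector with eigenvalue $\lambda'(h)\mu'(h')$. Hence, to produce a pair $(h,h')$ for which the two eigenvalues differ, it suffices to show that if
$$\lambda(h)\mu(h')=\lambda'(h)\mu'(h')\qquad\text{for all }(h,h')\in H\times H,$$
then necessarily $(\lambda,\mu)=(\lambda',\mu')$. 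Proving this implication is exactly the contrapositive of the lemma.

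First I would specialize $h'$ to the identity element $e\in H$. Since $\mu(e)=\mu'(e)=1$, the displayed identity reduces to $\lambda(h)=\lambda'(h)$ for every $h\in H$. Recalling from \ref{TORUCS} that $\bf L$ is the group of characters of $H$, the natural assignment sending a weight in $\bf L$ to the corresponding character of $H$ is injective, so $\lambda(h)=\lambda'(h)$ for all $h$ forces $\lambda=\lambda'$ in $\bf L$. Symmetrically, specializing $h$ to $e$ yields $\mu(h')=\mu'(h')$ for all $h'\in H$, whence $\mu=\mu'$. Combining these gives $(\lambda,\mu)=(\lambda',\mu')$, contradicting the hypothesis $(\lambda,\mu)\neq(\lambda',\mu')$; thus some $(h,h')$ with distinct eigenvalues must exist.

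The only step meriting attention is the injectivity of the map $\bf L\hookrightarrow\mathrm{Hom}(H,\Bbb C^\times)$, that is, that two distinct elements of $\bf L$ cannot coincide as characters on all of $H$. This is immediate from the very definition of $\bf L$ as the character group of the torus $H$ recorded in \ref{TORUCS}, so I do not expect any genuine obstacle here; the essential content of the argument is simply the separate specialization of the two torus factors so that each of the two gradings can be read off independently.
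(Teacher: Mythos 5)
Your proof is correct, but it takes a genuinely more elementary route than the paper. The paper argues directly: assuming (say) $\lambda\neq\lambda'$, it invokes linear independence of distinct characters (Lemma of \S 16.1 in Humphreys) and then a separation result (Lemma C of \S 16.2) to produce, \emph{for an arbitrary fixed} $h'\in H$, an element $h\in H$ with $\lambda(h)\neq\lambda'(h)\mu(h')^{-1}\mu'(h')$; the constant $\mu(h')^{-1}\mu'(h')$ is exactly why the stronger separation lemma is needed there. You sidestep this entirely by specializing the torus coordinates: taking $h'=e$ kills the $\mu$-factors and reduces the problem to distinguishing $\lambda$ from $\lambda'$ as characters, and taking $h=e$ does the same for $\mu$ and $\mu'$; since $\bf L$ is by definition the character group of $H$ (as recorded in \ref{TORUCS}), distinct elements of $\bf L$ are distinct functions on $H$, and your contrapositive closes immediately with no citation needed. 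What the paper's argument buys in exchange for the extra machinery is a slightly stronger conclusion --- the separating pair $(h,h')$ can be found with $h'$ prescribed arbitrarily --- but the lemma as stated only requires the existence of a single pair, so your specialization to the identity is fully adequate. The one point worth making explicit (which you do flag) is that the identification of $\bf L$ with a lattice in $\frak h^*$ is faithful, i.e.\ two elements of $\bf L$ agreeing as functions on $H$ coincide; this is indeed immediate from the definition of $\bf L$ and the connectedness of the torus, so there is no gap.
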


\begin{proof}
Since $(\lambda,\mu)\neq(\lambda',\mu')$, we have that $\lambda\neq\lambda'$ or $\mu\neq\mu'$, say $\lambda\neq\lambda'$. Then $\lambda$ and $\lambda'$ are linearly independent by \cite[Lemma of \S16.1 ]{Hum}. Choose any $h'\in H$.
By \cite[Lemma C of \S16.2 ]{Hum}, there exists an element $h\in H$ such that $\lambda(h)\neq \lambda'(h)\mu(h')^{-1}\mu'(h')$. Hence $(h,h').a=\lambda(h)\mu(h')a$, $(h,h').a'=\lambda'(h)\mu'(h')a'$ and $\lambda(h)\mu(h')\neq \lambda'(h)\mu'(h')$.
\end{proof}

Let $w\in W\times W$. Since $I_w$ is homogeneous by Lemma~\ref{CPRIB} and  all elements of the multiplicatively closed set $\mathcal{E}_w$ are
$H\times H$-eigenvectors,  (\ref{MACT}) induces an action of $H\times H$ on $\Bbb C[G]_w$.

\begin{lem}\label{FINLL}
Every Poisson $H\times H$-prime ideal of $\Bbb C[G]_w$ is zero. In particular $I_w$ is an $H\times H$-stable Poisson prime ideal of $\Bbb C[G]$.
\end{lem}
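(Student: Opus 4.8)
The plan is to reduce the main assertion to a statement about the Poisson center $\mathcal Z_w$ and then exploit its explicit structure. First I would take a (proper) Poisson $H\times H$-prime ideal $P$ of $\Bbb C[G]_w$ — in particular a proper $H\times H$-stable Poisson ideal — and invoke Theorem~\ref{HACTR}: every Poisson ideal of $\Bbb C[G]_w$ is generated by its intersection with $\mathcal Z_w$, so $P=(P\cap\mathcal Z_w)\Bbb C[G]_w$. The key point is that $P\cap\mathcal Z_w$ is a \emph{proper} ideal of $\mathcal Z_w$ (otherwise $1\in P$) which inherits enough symmetry to be forced to vanish. I would observe that the $H$-action used in Theorem~\ref{HACTQ}(5) is exactly the restriction of the $H\times H$-action to $\{1\}\times H$: for $z\in\Bbb C[G]_{\lambda,\mu}$ one has $(1,h')\cdot z=\mu(h')z=h'\cdot z$ by (\ref{MACT}). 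Hence an $H\times H$-stable ideal is in particular stable under this $H$-action, so $P\cap\mathcal Z_w$ is a proper $H$-stable ideal of $\mathcal Z_w$.

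Next I would finish the main assertion using the structure of $\mathcal Z_w$ from Theorem~\ref{HACTQ}. By part (4), $\mathcal Z_w$ is the group algebra of a free abelian group of finite rank, i.e.\ a Laurent polynomial ring over $\Bbb C$; in particular it is an affine domain, hence a Jacobson ring with zero nilradical, so its Jacobson radical is $0$. By part (5) the group $H$ permutes the maximal ideals of $\mathcal Z_w$ transitively. A proper $H$-stable ideal $J=P\cap\mathcal Z_w$ lies in some maximal ideal $M_0$, whence $J=h'\cdot J\subseteq h'\cdot M_0$ for all $h'$; by transitivity $J$ is contained in every maximal ideal, so $J\subseteq \operatorname{Jac}(\mathcal Z_w)=0$. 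Therefore $P\cap\mathcal Z_w=0$ and $P=0$. Note this argument only uses that $P$ is proper and $H\times H$-stable, so it yields the slightly stronger fact that $\Bbb C[G]_w$ has no nonzero proper $H\times H$-stable Poisson ideal.

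For the final sentence, that $I_w$ is an $H\times H$-stable Poisson ideal is immediate from Lemma~\ref{CPRIB}: it is homogeneous, hence generated by $H\times H$-eigenvectors and so $H\times H$-stable, and it is Poisson. The substantive claim is primeness, for which I would first show $\Bbb C[G]_w$ is a domain. By Theorem~\ref{HACTE}(3) together with (\ref{ADDD}), $\Bbb C[G]_w=\bigoplus_{\lambda\in{\bf L}}\Bbb C[G]_w^H c_{w\lambda}$ is a Laurent (twisted group) extension of $\Bbb C[G]_w^H$ by the free abelian group ${\bf L}$, with the units $c_{w\lambda}$ as homogeneous generators, so it is a domain as soon as $\Bbb C[G]_w^H$ is. To see the latter, note $\Bbb C[G]_w^H=(C_w^H)_{\mathcal D}$ is Noetherian (a localization of the finitely generated algebra $C_w^H$), hence has finitely many minimal primes; in characteristic zero each minimal prime is a Poisson ideal and is stable under every derivation, in particular under each $\{a,-\}$ and each $\operatorname{ad}_c$, which is a derivation by (\ref{ADJC}). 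Thus each minimal prime is a proper Poisson ideal invariant under the Poisson adjoint action, so by Corollary~\ref{HACTP} it must be $0$; hence $\Bbb C[G]_w^H$, and therefore $\Bbb C[G]_w$, is a domain.

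Finally I would deduce primeness of $I_w$ by identifying it with the kernel $K$ of the Poisson map $\Bbb C[G]\to\Bbb C[G]_w$: since $\Bbb C[G]/K$ embeds in the domain $\Bbb C[G]_w$, $K$ is prime, and it suffices to prove $K=I_w$. This is where the main obstacle lies. Since $K$ is the $\mathcal E_w$-saturation of $I_w$, the equality $K=I_w$ is equivalent to the localizing set $\mathcal E_w$ consisting of regular elements modulo $I_w$, i.e.\ to $\Bbb C[G]/I_w$ having no $\mathcal E_w$-torsion. This does \emph{not} follow formally from $\Bbb C[G]_w$ being a domain — a priori $I_w$ could be an intersection of several minimal primes only one of which survives the localization — so one must use the explicit generators of $I_w$ and the Poisson normality of $c_{w\Lambda},\tilde c_{w\Lambda}$ (Lemma~\ref{CPRIBB}) to show directly that their images are non-zero-divisors mod $I_w$: the annihilator of such an element is an $H\times H$-stable Poisson ideal disjoint from $\mathcal E_w$, and ruling it out is the genuinely delicate step. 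Granting this, $I_w=K$ is an $H\times H$-stable Poisson prime ideal, completing the proof.
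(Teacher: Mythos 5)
Your proposal proves the same statement but by a genuinely different route in both halves, so let me compare. For the main assertion, both you and the paper start from Theorem~\ref{HACTR}, replacing a nonzero Poisson $H\times H$-prime $Q$ of $\Bbb C[G]_w$ by a nonzero piece of the Poisson center $\mathcal Z_w$; but where the paper then reduces (via Theorem~\ref{HACTE} and Lemma~\ref{FINL}) to a bihomogeneous element and uses the Poisson adjoint action (Lemma~\ref{HACTK}(1)) to climb up to the invertible element $c_{w\Lambda}\in Q$, you instead invoke Theorem~\ref{HACTQ}(4),(5): $\mathcal Z_w$ is a Laurent polynomial ring, hence a Jacobson domain with zero Jacobson radical, $H$ acts transitively on its maximal ideals, and so a proper $H$-stable ideal of $\mathcal Z_w$ lies in every maximal ideal and vanishes. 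Your observation that the $H$-action of Theorem~\ref{HACTQ}(5) is the restriction of (\ref{MACT}) to $\{1\}\times H$ is exactly what makes this legitimate; the argument is arguably cleaner than the paper's, and it gives the stronger conclusion (no nonzero proper $H\times H$-stable Poisson ideal) that you note. For the domain property of $\Bbb C[G]_w$ the paper is more economical than you: it takes a minimal prime $Q$ of $\Bbb C[G]_w$, notes $Q$ is Poisson by \cite[Lemma 1.1(c)]{Good3} and $Q=(Q:H\times H)$ is prime by \cite[II.1.12 Corollary]{BrGo}, hence $Q$ is a Poisson $H\times H$-prime and therefore zero by the first part. Your detour through $\Bbb C[G]_w^H$ (minimal primes are derivation-stable, hence zero by Corollary~\ref{HACTP}, then a leading-term argument for $\bigoplus_\lambda \Bbb C[G]_w^H c_{w\lambda}$ using (\ref{ADDD})) also works, but it needs $\Bbb C[G]_w^H$ to be Noetherian, and your justification --- finite generation of $C_w^H$ --- is nowhere established in the paper; it can be repaired (e.g.\ by invariant theory for the rational torus action on the affine algebra $\Bbb C[G]_w$), but the paper's route avoids this side issue entirely.

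Concerning the step you flag as the genuine obstacle --- that $I_w$ coincides with the kernel of $\Bbb C[G]\to\Bbb C[G]_w$, equivalently that $\mathcal E_w$ consists of non-zero-divisors modulo $I_w$ --- you should know that the paper does not prove it either: its proof ends with ``It follows that $I_w$ is $H\times H$-stable Poisson prime,'' which silently assumes that $\Bbb C[G]/I_w$ embeds into the domain $\Bbb C[G]_w$. A priori $I_w$ could fail to be semiprime, or could be an intersection of several minimal primes of which only one survives localization, exactly as you say. So you have not fallen short of the paper at this point; rather, you have isolated the one step where its argument is an assertion inherited from \cite{HoLeT} (where regularity of the elements $c_{w\Lambda}$ modulo $I_w$ rests on representation-theoretic input) rather than a proof given in the text. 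Apart from the Noetherianity point above, your proposal is correct to the same standard as the paper's own proof, and more explicit about where that standard is not self-contained.
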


\begin{proof}
Let $Q$ be a Poisson $H\times H$-prime ideal of $\Bbb C[G]_w$. Then  $Q$  is a Poisson prime ideal by \cite[II.1.12. Corollary]{BrGo}.
Suppose that $Q\neq0$.  Then there exists a nonzero element $a\in Q\cap\mathcal{Z}_w$ by Theorem~\ref{HACTR}. Acting $H\times H$ on $a$, we may assume that  $a=c^{V(\Lambda)}_{f_{-\lambda},v_\Lambda}$ or
$a=c^{V(\Lambda)^*}_{v_{\lambda},f_{-\Lambda}}$ for some $\lambda$ and $\Lambda$ by Theorem~\ref{HACTE} and Lemma~\ref{FINL}, say $a=c^{V(\Lambda)}_{f_{-\lambda},v_\Lambda}\in Q$. Applying Poisson adjoint action on $a$, we have that $Q$ contains an element $c_{w\Lambda}$ by Lemma~\ref{HACTK}(1), which is a contradiction since $c_{w\Lambda}$ is invertible. Hence $Q=0$.

Let $Q$ be a minimal prime ideal of $\Bbb C[G]_w$. Then $Q$ is a Poisson ideal by \cite[Lemma 1.1(c)]{Good3} and the $H\times H$-prime ideal $(Q:H\times H)$ is prime by \cite[II.1.12. Corollary]{BrGo}.
Hence $Q=(Q:H\times H)$, that is a Poisson $H\times H$-prime ideal of $\Bbb C[G]_w$. Therefore $Q=0$ by the above paragraph. It follows that $I_w$ is $H\times H$-stable Poisson prime.
\end{proof}

\begin{cor}\label{FF}
The Poisson $H\times H$-prime ideals of $\Bbb C[G]$ are only the ideals $I_w$, $w\in W\times W$.
\end{cor}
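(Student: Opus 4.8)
The plan is to combine the stratification of Poisson prime ideals from Theorem~\ref{CPRIBBB} with the rigidity of the localized algebras recorded in Lemma~\ref{FINLL}. First I would dispose of the easy containment: by Lemma~\ref{FINLL} each $I_w$ is an $H\times H$-stable Poisson prime ideal, and any prime ideal is automatically $H\times H$-prime, since $IJ\subseteq I_w$ for $H\times H$-ideals $I,J$ forces $I\subseteq I_w$ or $J\subseteq I_w$ by primeness. Hence every $I_w$, $w\in W\times W$, is a Poisson $H\times H$-prime ideal, giving one inclusion.

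For the converse, let $P$ be an arbitrary Poisson $H\times H$-prime ideal of $\Bbb C[G]$. Because the $H\times H$-action on $\Bbb C[G]$ is rational, \cite[II.1.12. Corollary]{BrGo} shows that $P$ is in fact a prime, hence Poisson prime, ideal (exactly as used in the proof of Lemma~\ref{FINLL}). Theorem~\ref{CPRIBBB} then produces a unique $w=(w_+,w_-)\in W\times W$ with $I_w\subseteq P$ and $(P/I_w)\cap\mathcal{E}_w=\varnothing$. The remaining task is to upgrade this to the equality $P=I_w$.

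The key step is to pass to the localization $\Bbb C[G]_w=(\Bbb C[G]/I_w)_{\mathcal{E}_w}$. Since both $P$ and $I_w$ are $H\times H$-stable (for $I_w$ this is part of Lemma~\ref{FINLL}, together with Lemma~\ref{CPRIB}) and every element of $\mathcal{E}_w$ is an $H\times H$-eigenvector that is Poisson normal by Lemma~\ref{CPRIBB}, the image $P/I_w$ is an $H\times H$-stable Poisson prime ideal of $\Bbb C[G]/I_w$ disjoint from $\mathcal{E}_w$, and its extension $Q=(P/I_w)\,\Bbb C[G]_w$ is an $H\times H$-stable Poisson prime ideal of $\Bbb C[G]_w$; being prime and $H\times H$-stable, it is $H\times H$-prime. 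By Lemma~\ref{FINLL} the only such ideal is $Q=0$. As $P/I_w$ is a prime ideal disjoint from the multiplicative set $\mathcal{E}_w$, the standard extension--contraction correspondence yields $P/I_w=Q\cap(\Bbb C[G]/I_w)=0$, whence $P=I_w$. Combining the two directions gives the asserted characterization.

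I expect the main obstacle to lie not in any single computation but in the careful bookkeeping of the Poisson and $H\times H$-structures through the localization: one must check that $\mathcal{E}_w$ consists of Poisson normal $H\times H$-eigenvectors (so that $\Bbb C[G]_w$ is genuinely a Poisson algebra and $Q$ is a Poisson ideal invariant under $H\times H$), and that the contraction of $Q$ recovers exactly $P/I_w$ rather than a larger saturation, which is where primeness and disjointness from $\mathcal{E}_w$ are both essential.
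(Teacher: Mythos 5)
Your proposal is correct and follows essentially the same route as the paper: the paper's proof is the single line ``It follows by Lemma~\ref{FINLL},'' and your argument is precisely the intended expansion of that citation --- one direction from the second assertion of Lemma~\ref{FINLL} (an $H\times H$-stable prime ideal is automatically $H\times H$-prime), the other by combining \cite[II.1.12.~Corollary]{BrGo}, the stratification of Theorem~\ref{CPRIBBB}, and the first assertion of Lemma~\ref{FINLL} applied to the extension of $P/I_w$ in $\Bbb C[G]_w$, then contracting back. Your closing remark about the bookkeeping is also apt, since the needed facts (elements of $\mathcal{E}_w$ are Poisson normal $H\times H$-eigenvectors, and $\Bbb C[G]/I_w$ is a domain so the contraction recovers $P/I_w$ exactly) are supplied by Lemma~\ref{CPRIBB} and Lemma~\ref{FINLL} themselves.
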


\begin{proof}
It follow by Lemma~\ref{FINLL}.
\end{proof}

\begin{thm}
(1) The Poisson algebra $\Bbb C[G]$ satisfies the Poisson Dixmier-Moeglin equivalence.
More precisely,
$$\aligned
\Pprim \Bbb C[G]&=\{\text{locally closed  Poisson prime ideals}\}\\
&=\{\text{rational  Poisson prime ideals}\}\\
&=\bigsqcup_{w\in W\times W}\ \{\text{maximal elements of
}\Pspec_w \Bbb C[G]\}
\endaligned$$

(2) $\Pspec \Bbb C[G]$ and $\Pprim \Bbb C[G]$ are topological quotients of
$\spec \Bbb C[G]$ and $\max \Bbb C[G]$ respectively.
\end{thm}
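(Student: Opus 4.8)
The plan is to reduce the whole statement to the stratification already in hand and then analyse a single stratum through its Poisson centre. By Corollary~\ref{PPPP} the space $\Pspec\Bbb C[G]$ is the disjoint union of the pieces $\Pspec_w\Bbb C[G]$, each homeomorphic to $\Pspec\Bbb C[G]_w$, so it suffices to describe the Poisson prime ideals of one localization $\Bbb C[G]_w$. The engine is Theorem~\ref{HACTR}: every Poisson ideal of $\Bbb C[G]_w$ is generated by its intersection with the Poisson centre $\mathcal Z_w$. I would first verify that the contraction and extension maps $P\mapsto P\cap\mathcal Z_w$ and $Q\mapsto Q\Bbb C[G]_w$ are mutually inverse, inclusion-preserving bijections between $\Pspec\Bbb C[G]_w$ and $\spec\mathcal Z_w$, hence homeomorphisms for the Zariski topologies (here one uses that $\mathcal Z_w$ is central, so $Q\Bbb C[G]_w$ is automatically a Poisson ideal, and Theorem~\ref{HACTR} to invert). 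Since $\mathcal Z_w$ is the group algebra of a finitely generated free abelian group over $\Bbb C$ by Theorem~\ref{HACTQ}(4), it is a Laurent polynomial ring, so $\spec\mathcal Z_w$ is the spectrum of an affine commutative domain over the algebraically closed field $\Bbb C$.

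With this dictionary the three conditions of the Poisson Dixmier--Moeglin equivalence each become a statement about $\spec\mathcal Z_w$. A Poisson prime $P$ is a maximal element of $\Pspec_w\Bbb C[G]$ precisely when $P\cap\mathcal Z_w$ is a maximal ideal of $\mathcal Z_w$. For rationality, the Poisson centre of $\operatorname{Frac}(\Bbb C[G]/P)\cong\operatorname{Frac}(\Bbb C[G]_w/\,\overline P)$ is $\operatorname{Frac}\!\big(\mathcal Z_w/(\overline P\cap\mathcal Z_w)\big)$, which equals $\Bbb C$ exactly when $\overline P\cap\mathcal Z_w$ is maximal, by the Nullstellensatz over $\Bbb C$; thus rationality coincides with maximality in the stratum. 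The general implications $\text{locally closed}\Rightarrow\text{primitive}\Rightarrow\text{rational}$, valid for affine Poisson $\Bbb C$-algebras and recorded in \cite{Oh4}, then reduce the equivalence to the single implication $\text{rational}\Rightarrow\text{locally closed}$, and simultaneously pin down $\Pprim\Bbb C[G]$ as the union over $w$ of the maximal elements of the strata.

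The remaining implication is where I expect the main obstacle to lie, since it forces one to compare the globally defined notion of local closure in $\Pspec\Bbb C[G]$ with the intrinsic topology of a stratum. A rational $P$ corresponds to a maximal ideal of $\mathcal Z_w$; in the affine domain $\mathcal Z_w$ a prime is locally closed if and only if it is a closed point, so through the homeomorphism of the first paragraph $P$ is locally closed inside $\Pspec_w\Bbb C[G]$. To upgrade this to local closure in the full space I would show that each stratum is itself locally closed in $\Pspec\Bbb C[G]$: this is exactly what Corollary~\ref{FF} buys, as it identifies the finitely many Poisson $H\times H$-prime ideals with the $I_w$, so the $H\times H$-stratification is a finite stratification into locally closed pieces, and a point that is locally closed in a locally closed stratum is locally closed in the ambient space. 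Assembling the cycle, each implication routed through ``maximal element of $\Pspec_w\Bbb C[G]$'', yields both the equivalence and the displayed description; the decomposition $\Pspec\Bbb C[G]=\bigsqcup_w\Pspec_w\Bbb C[G]$ is then just Corollary~\ref{PPPP} again.

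For part (2) I would introduce the natural surjections $\spec\Bbb C[G]\to\Pspec\Bbb C[G]$ and $\max\Bbb C[G]\to\Pprim\Bbb C[G]$ sending an ideal to its Poisson core, the largest Poisson ideal it contains; surjectivity onto $\Pprim\Bbb C[G]$ is the definition of Poisson primitivity, and onto $\Pspec\Bbb C[G]$ it follows from part (1), since every Poisson prime, being rational exactly when maximal in its stratum, is an intersection of Poisson primitive ideals. Both maps are continuous, because the core of any prime containing a Poisson ideal $J$ contains $J$. To pass from continuity to the quotient property I would show that a subset whose preimage is closed is itself closed, which by taking cores reduces to checking that closure commutes with the core map; here the transitivity of the $H$-action on the maximal ideals of $\mathcal Z_w$ from Theorem~\ref{HACTQ}(5) controls the fibres of the core map over each stratum. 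The delicate point to get right will be precisely this commutation of closure with the core map, and once it is established the quotient-topology statements for both $\Pspec\Bbb C[G]$ and $\Pprim\Bbb C[G]$ are immediate.
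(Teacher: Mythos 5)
You and the paper part ways at the very first step: the paper's proof of this theorem is essentially a citation. Having already verified the hypotheses of Goodearl's general machinery --- $\Bbb C[G]$ is an affine $\Bbb C$-algebra (Theorem~\ref{ALGEB}), $H\times H$ acts rationally by Poisson automorphisms via (\ref{MACT}), and the Poisson $H\times H$-primes are exactly the finitely many $I_w$ (Corollary~\ref{FF}) --- the paper simply quotes \cite[Theorems 4.1 and 4.3]{Good3} together with the stratification of Corollary~\ref{PPPP}. What you propose is to re-derive the content of those cited theorems inside this one example. For part (1) that is a legitimate alternative and your outline can be completed: the cycle rational $\Rightarrow$ maximal in stratum $\Rightarrow$ locally closed $\Rightarrow$ primitive $\Rightarrow$ rational is exactly how the general theorem is proved. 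Two repairs are needed, however. First, Theorem~\ref{HACTR} by itself does not give your ``mutually inverse bijections'' between $\Pspec\Bbb C[G]_w$ and $\spec\mathcal Z_w$: it gives injectivity of $P\mapsto P\cap\mathcal Z_w$, but to invert you must also prove that $Q\Bbb C[G]_w\cap\mathcal Z_w=Q$ and that $Q\Bbb C[G]_w$ is prime for each prime $Q$ of $\mathcal Z_w$; this needs a separate argument (for instance that $\Bbb C[G]_w$ is a free $\mathcal Z_w$-module, which one can extract from Theorem~\ref{HACTE}(3) and Theorem~\ref{HACTQ}). Alternatively, injectivity alone already shows that $P\cap\mathcal Z_w$ maximal forces $P$ to be maximal in its stratum, which is all the cycle requires. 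Second, your assertion that the Poisson centre of the fraction field of $\Bbb C[G]/P$ \emph{equals} the fraction field of $\mathcal Z_w/(\overline P\cap\mathcal Z_w)$ is an unjustified overclaim: only the embedding of $\mathcal Z_w/(\overline P\cap\mathcal Z_w)$ into that Poisson centre is evident (Poisson central elements of a domain remain Poisson central in its field of fractions), and luckily this embedding is all that rational $\Rightarrow$ maximal needs; the reverse implication must come from closing the cycle, not from the claimed equality.

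Part (2) is where you have a genuine gap, and the gap is concealed by your treating the two halves of the statement on the same footing. The half concerning $\Pspec\Bbb C[G]$ and $\spec\Bbb C[G]$ is in fact formal and needs no torus action: writing $\pi(Q)$ for the largest Poisson ideal contained in a prime $Q$ (a Poisson prime, hence prime since $\Bbb C[G]$ is noetherian), continuity follows from $\pi^{-1}(\{P\mid P\supseteq J\})=\{Q\mid Q\supseteq J\}$ for Poisson ideals $J$; and if $\pi^{-1}(X)=V(I)$ is closed, then each $P\in X$ is its own core and so lies in $V(I)$, hence the Poisson ideal $J=\bigcap_{P\in X}P$ contains $I$, and every Poisson prime $P'\supseteq J$ lies in $V(I)=\pi^{-1}(X)$ and equals $\pi(P')\in X$; thus $X$ is closed. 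The hard half is $\Pprim\Bbb C[G]$ versus $\max\Bbb C[G]$, precisely because a Poisson primitive ideal is generally \emph{not} a maximal ideal, so $X\not\subseteq\pi^{-1}(X)$ and the formal argument collapses: given $\pi^{-1}(X)=V(I)\cap\max\Bbb C[G]$ and a Poisson primitive $P'$ containing $\bigcap_{P\in X}P$, one must \emph{produce} a maximal ideal $M'\supseteq I$ whose Poisson core is exactly $P'$. Nothing in your sketch accomplishes this: ``closure commutes with the core map'' is a restatement of the problem rather than an argument, and you never say how the transitivity in Theorem~\ref{HACTQ}(5) is actually to be deployed. This missing step is precisely the content of \cite[Theorem 4.1]{Good3}, whose proof makes essential use of the finite stratification and of the equivalence from part (1); you must either reproduce that argument or cite it, and until then part (2) is unproved.
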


\begin{proof}
(1) It follows by Theorem~\ref{ALGEB}, Corollary~\ref{PPPP}, Corollary~\ref{FF} and \cite[Theorem 4.3]{Good3}.

(2) It follows by  \cite[Theorem 4.1]{Good3}.
\end{proof}


\bibliographystyle{amsplain}


\providecommand{\bysame}{\leavevmode\hbox to3em{\hrulefill}\thinspace}
\providecommand{\MR}{\relax\ifhmode\unskip\space\fi MR }
\providecommand{\MRhref}[2]{%
  \href{http://www.ams.org/mathscinet-getitem?mr=#1}{#2}
}
\providecommand{\href}[2]{#2}

\end{document}